\documentclass[12pt, reqno]{amsart}
\usepackage{amsmath}
\usepackage{amssymb}
\usepackage{amsthm}
\usepackage{xcolor}
\usepackage[abbrev]{amsrefs}
\usepackage[utf8]{inputenc}
\usepackage{enumitem}
\usepackage[normalem]{ulem}

\newcommand{\A}{\ensuremath{\mathcal{A}}}
\newcommand{\VV}{\ensuremath{\mathcal{V}}}
\newcommand{\XX}{\ensuremath{\mathcal{X}}}
\newcommand{\YY}{\ensuremath{\mathcal{Y}}}
\newcommand{\ZZ}{\ensuremath{\mathcal{Z}}}
\newcommand{\UU}{\ensuremath{\mathcal{U}}}
\newcommand{\OO}{\ensuremath{\mathcal{O}}}
\newcommand{\II}{\ensuremath{\mathcal{I}}}
\newcommand{\Lip}{\ensuremath{\mathrm{Lip}}}
\newcommand{\Rea}{\ensuremath{\mathbb{R}}}
\newcommand{\Nat}{\ensuremath{\mathbb{N}}}
\newcommand{\Int}{\ensuremath{\mathbb{Z}}}

\newcommand{\MM}{\ensuremath{\mathcal{M}}}
\newcommand{\NN}{\ensuremath{\mathcal{N}}}
\newcommand{\F}{\ensuremath{\mathcal{F}}}
\newcommand{\Id}{\operatorname{\rm{Id}}}
\newcommand{\supp}{\operatorname{\mathrm{supp}}\nolimits}

\newcommand{\spn}{\operatorname{\mathrm{span}}\nolimits}

\newtheorem{Theorem}{Theorem}[section]
\newtheorem{Lemma}[Theorem]{Lemma}
\newtheorem{Proposition}[Theorem]{Proposition}
\newtheorem{Corollary}[Theorem]{Corollary}

\theoremstyle{remark}
\newtheorem{Remark}[Theorem]{Remark}
\newtheorem{Definition}[Theorem]{Definition}

\newtheorem{Question}[Theorem]{Question}

\hyphenation{wis-sen-schaf-ten}

\allowdisplaybreaks

\AtBeginDocument{\def\MR#1{}}

\subjclass[2010]{46B03; 46B04; 46B20; 46A16}

\keywords{Arens-Eells space, Lipschitz free space, transportation cost space, Quasi-Banach space, Lipschitz free $p$-space}

\begin{document}

\title[Sums of Lipschitz free spaces]{Lipschitz free spaces isomorphic to their infinite sums and geometric applications}

\author[F. Albiac]{Fernando Albiac}
\address{Mathematics Department--InaMat \\
Universidad P\'ublica de Navarra\\
Campus de Arrosad\'{i}a\\
Pamplona\\
31006 Spain}
\email{fernando.albiac@unavarra.es}

\author[J. L. Ansorena]{Jos\'e L. Ansorena}
\address{Department of Mathematics and Computer Sciences\\
Universidad de La Rioja\\
Logro\~no\\
26004 Spain}
\email{joseluis.ansorena@unirioja.es}

\author[M. C\'uth]{Marek C\'uth}
\address{Faculty of Mathematics and Physics, Department of Mathematical Analysis\\
Charles University\\
186 75 Praha 8\\
Czech Republic}
\email{cuth@karlin.mff.cuni.cz}

\author[M. Doucha]{Michal Doucha}
\address{Institute of Mathematics\\
Czech Academy of Sciences\\
\v Zitn\'a 25\\
115 67 Praha 1\\
Czech Republic}
\email{doucha@math.cas.cz}

\begin{abstract}We find general conditions under which Lipschitz-free spaces over metric spaces are isomorphic to their infinite direct $\ell_1$-sum and exhibit several applications. As examples of such applications we have that Lipschitz-free spaces over balls and spheres of the same finite dimensions are isomorphic, that the Lipschitz-free space over $\Int^d$ is isomorphic to its $\ell_1$-sum, or that the Lipschitz-free space over any snowflake of a doubling metric space is isomorphic to $\ell_1$. Moreover, following new ideas of Bru{\`e} et al.\ from \cite{BMS18} we provide an elementary self-contained proof that Lipschitz-free spaces over doubling metric spaces are complemented in Lipschitz-free spaces over their superspaces and they have BAP. Everything, including the results about doubling metric spaces, is explored in the more comprehensive setting of $p$-Banach spaces, which allows us to appreciate the similarities and differences of the theory between the cases $p<1$ and $p=1$.
\end{abstract}

\thanks{F. Albiac acknowledges the support of the Spanish Ministry for Economy and Competitivity under Grant MTM2016-76808-P for \emph{Operators, lattices, and structure of Banach spaces}. F. Albiac and J.~L. Ansorena acknowledge the support of the Spanish Ministry for Science, Innovation, and Universities under Grant PGC2018-095366-B-I00 for \emph{An\'alisis Vectorial, Multilineal y Aproximaci\'on}. M.~C\'uth has been supported by Charles University Research program No. UNCE/SCI/023. M. Doucha was supported by the GA\v{C}R project 19-05271Y and RVO: 67985840.}

\maketitle

\section{Introduction}\noindent
In recent years, Lipschitz-free spaces over metric spaces have become one of the most widely investigated class of Banach spaces. They are intimately connected to the nonlinear geometry of Banach spaces and have proved a very useful tool in their study, to the extend that they have become a topic towards which the modern research focus in Banach space theory is shifting; see, e.g., the seminal paper \cite{GodefroyKalton2003}, the surveys \cites{GLZ, G15}, or the monograph \cite{WeaverBook2018}*{Chapter 5}.

The universal property of the norm of these spaces relates them to similar objects from different areas of mathematics such as, notably, optimal transport, where the set of Radon probability measures with finite first momentum endowed with the \emph{Wasserstein distance}, known as the \emph{Wasserstein space} (see the monograph by Villani \cite{VillaniBook2}), is canonically isometric to a set whose linear hull is dense in the corresponding Lipchitz-free space (see \cite{Edwards2011}*{Theorem 4.1, Theorem 4.4, Theorem 4.5 and Theorem 6.1}). Another research subject closely connected to the study of approximation properties of Lipschitz-free spaces which recently attracted a considerable attention of many researchers (see \cite{BrudnyiBrudnyi2007} or \cite{LN05}) is the topic of finding linear extension operators between Banach spaces of Lipschitz functions (see, e.g., \cite{K15}*{Section 2.1}).

Even within Banach space theory, these spaces are known under several different names, like Arens-Eells spaces (\cite{WeaverBook2018}) or transportation cost spaces (\cite{OO19}) to name a few. Here we will stick to the term Lipschitz-free space, which was the one used by Godefroy and Kalton in \cite{GodefroyKalton2003} and which is now prevalent in the theory. For more historical and terminological remarks we refer the reader to \cite{OO19}*{Section 1.6}.

There is an analogue of Lipschitz-free spaces in the more general framework of $p$-Banach spaces for $p\in(0,1]$, namely, the \emph{Lipschitz free $p$-spaces over quasimetric spaces}, where the case $p=1$ corresponds to the locally convex members of this extended family, i.e., the standard Lipschitz free Banach spaces over metric spaces. Lipschitz free $p$-spaces over quasimetric spaces were introduced in \cite{AlbiacKalton2009} with the purpose to build examples for each $0<p<1$ of two $p$-Banach spaces which are Lipschitz isomorphic but not linearly isomorphic. More recently, the authors initiated a systematic study of the structure of this class of spaces in \cites{AACD2018, AACD2019}.

A specific feature of nonlocally convex $p$-Banach spaces that differentiates them from Banach spaces is the lack of a duality theory, which forces the study of Lipschitz free $p$-Banach spaces over quasimetric (or metric) spaces to rely on more geometrical methods. It turns out that this approach brings fresh wind even to better understand the case $p=1$, as it is supported by several new results obtained in \cites{AACD2018, AACD2019}.

In this paper we continue with this line of research. Our motivation is to generalize several important structural results that have become a standard toolkit of a Lipschitz-free space theorist. These include Lee and Naor's result \cite{LN05} on the existence of $K$-random partitions of unity with respect to any subspace of a doubling metric space, Kalton's result from \cite{Kalton2004} that every Lipschitz-free space embeds into the infinite direct $\ell_1$-sum of free spaces over its annuli; Kaufmann's results from \cite{K15} that for every Banach space $X$ the Lipschitz-free space over $X$ is, on one hand, isomorphic to its $\ell_1$-sum and, on the other hand, to the Lipschitz-free space over its unit ball. We also extend the results from the paper \cite{CDL19}, which are formulated only in terms of the duals of Lipschitz-free spaces. Not only did we succeed to prove analogues of all these results for Lipschitz free $p$-spaces with $p\in(0,1]$ but, revisiting the topic, we actually found new results and applications which are of interest even for the classical case $p=1$.

We get started with the notion of \emph{$p$-complementably amenable subspace with constant $K$,} which for $p=1$ is equivalent to the existence of a $K$-random projection, a notion suggested by Ambrosio and Puglisi from \cite{AmbrosioPuglisi2016} and which in turn is motivated by the notion of $K$-gentle partition of unity by Lee and Naor from \cite{LN05}. Our main general results, which are novel even for the case $p=1$, are developed in Section~\ref{sect:generalEllPSum}. The applications of our methods, some of which are also new even for $p=1$, are explained in Section~\ref{sec:appl}. Let us advance some of the most interesting practical implementations of our results to the general theory:
\begin{enumerate}[label={$\bullet$},leftmargin=*]
\item If $\MM$ is a metric space and there is a point $x\in\MM$ such that every annulus centered at $x$ contains only finitely many points, then for every $p\in(0,1]$ the space
$\F_p(\MM)$ admits a subsymmetric basis if and only if $\F_p(\MM)$ is isomorphic to $\ell_p$ (see Proposition~\ref{prop:FpSubsym}).

\item Generalizing \cite{K15}*{Theorem 3.1 and Corollary 3.3}, if $X$ is a Banach space and $\MM\subset X$ is closed under multiplication by nonnegative numbers, for every $p\in(0,1]$ we have
\[
\F_p(\MM)\simeq \F_p(B_\MM) \simeq \F_p(\MM\setminus B_\MM) \simeq {\ell_p}(\F_p(\MM))
\]
(see Theorem~\ref{thm:Banach}).
\item For $d\in\Nat$ and $p\in(0,1]$ we have\[\F_p(\Rea^d)\simeq \F_p(S_{\Rea^{d+1}})\]
(see Theorem~\ref{thm:sphere}).
\end{enumerate}
In Section~\ref{sec:doubling} we show that whenever $\NN$ is a doubling metric space and $\MM$ is a metric space containing $\NN$ then $\F_p(\NN)$ is complemented in $\F_p(\MM)$ for every $p\in(0,1]$, which in particular solves Question 6.7 from \cite{AACD2019}. This is known for the case $p=1$, but we show an argument, based on ideas from \cite{BMS18}, which is much simpler (and self-contained) than the one used by other authors to prove it for $p=1$. We provide several applications to doubling metric spaces, amongst which we highlight the following ones because of their interest. Note that best-known examples of doubling metric spaces are, e.g., subsets of $\Rea^d$ or, more generally, subsets of Carnot groups.
\begin{enumerate}[label={$\bullet$},leftmargin=*]
\item If $(\MM,d)$ is a doubling metric space and $\alpha\in(0,1)$, then
\[
\F(\MM,d^\alpha)\simeq \ell_1
\]
(see Corollary~\ref{cor:doublingSnowflake}). This improves a result previously known only for compact doubling metric spaces (see\cite{WeaverBook2018}*{Theorems 4.38 and 8.49}).
\item If $\MM$ is a doubling self-similar metric space (like for instance a finite-dimensional Banach space or more generally a Carnot group) and $\NN\subset \MM$ is a net in $\MM$, then for every $p\in(0,1]$ we have
\[
\F_p(\NN)\simeq {\ell_p} (\F_p(\NN))
\]
and
\[
\Lip_0(\MM)\simeq\Lip_0(\NN).
\]
Those results are improvements of \cite{HN17}*{Theorem 7} and \cite{CDL19}*{Corollary 1.18}.
\item For every $d\in\Nat$ and $p\in(0,1]$ we have\[
\F_p(\Int^d)\simeq\F_p(\Nat^d)
\](see Theorem~\ref{thm:natAndInt}).
\end{enumerate}

\section{Notation and preliminaries}\noindent
We use standard notation in Banach space theory as can be found in \cite{AlbiacKalton2016}. We refer the reader to \cite{AACD2018}*{Sections 2 and 4} for basic facts and notation concerning $p$-metric spaces, $p$-Banach spaces, and Lipschitz free $p$-spaces over them for $0<p\le 1$.

We put
\[
\Nat_*=\{n\in\Int \colon n\ge 0\}.
\]

If $(\MM,d,0)$ is an arbitrary pointed $p$-metric space and $A\subseteq (0,\infty)$ (usually $A$ is an interval) we put
\[
\MM_A=\{x\in\MM \colon d(0,x)\in A\}\]
and
\[\MM_A^*=\{0\}\cup \MM_A.
\]
Given $A\subset\Rea$ and $R>0$ we put
\[
R^{A}=\{R^x\colon x\in A\}.
\]

Given a quasi-Banach space $X$ and $A\subset X$, we denote by $[A]$ the closed linear span of $A$. Moreover, for $m\in\Nat$ we denote by $\kappa_m(X)$ the smallest constant $C\ge 1$ such that
\[
\left\Vert \sum_{j=1}^m x_j \right\Vert \le C \sum_{j=1}^m \Vert x_j\Vert,
\quad x_j\in X.
\]
Note that if $X$ is a $p$-Banach space then $\kappa_m(X)\le m^{1/p-1}$.

Given $0\le p\le\infty$ and a countable set $N$, $(\bigoplus_{n\in N} X_n)_p$ denotes the sum in the sense of $\ell_p$ ($c_0$ if $p=0$) of the family of quasi-Banach spaces $(X_n)_{n\in N}$. If all the spaces $X_n$ are equal to a single space $X$, we shall instead denote their $p$-sum by $\ell_p(X)$. If $N=\{1,2\}$ we will simply put $(X_1\oplus X_2)_p$, and if the index $p$ is irrelevant or clear from context $X_1\oplus X_2$ will stand for $(X_1\oplus X_2)_p$.

\begin{Definition} Let $0<p\le 1$.
We say that two $p$-metric spaces $\NN$ and $\MM$ are $K$-Lipschitz isomorphic, where $K\in[1,\infty)$, if there are Lipschitz maps $f\colon \NN\to \MM$ and $g\colon \MM\to\NN$ such that $g\circ f=\Id_\NN$, $f\circ g=\Id_\MM$ and $\Vert f \Vert_\Lip \Vert g\Vert_\Lip\le K$. If the constant $K$ is irrelevant, we say that $\NN$ and $\MM$ are Lipschitz isomorphic.
\end{Definition}

\begin{Definition}\label{def:isomorphism} We say that two quasi-Banach spaces $X$ and $Y$ are $K$-isomorphic and write $X\simeq_K Y$, where $K\in[1,\infty)$, if there are bounded linear maps $S\colon X \to Y$ and $T\colon Y \to X$ such that $T\circ S=\Id_X$, $T\circ S=\Id_Y$, and $\Vert T\Vert\cdot \Vert S\Vert\le K$. If the constant $K$ is irrelevant, we say that $X$ and $Y$ are isomorphic and write $X\simeq Y$.
\end{Definition}

\begin{Remark}
If two quasi-Banach spaces $X$ and $Y$ are $K$-isomorphic, we can choose $S$ as in Definition~\ref{def:isomorphism} with $\Vert S\Vert=1$. Thus, if $X$ and $Y$ are $1$-isomorphic, in particular they are isometric. In contrast, two $1$-Lipschitz isomorphic $p$-metric spaces need not be isometric, as shown, e.g., by the metric spaces $[0,1]$ and $[0,2]$ with the usual distance.
\end{Remark}

\begin{Definition}
Given quasi-Banach spaces $X$ and $Y$, a constant $K\geq 1$, and a linear map $S\colon X\to Y$, we say that \emph{$X$ is $K$-complemented in $Y$ via $S$} if there exists a bounded linear operator $T\colon Y\to X$ with $T\circ S = \Id_X$ and $\|S\|\cdot \|T\|\leq K$. We will say that \emph{$X$ is $K$-complemented in $Y$}, and write $X\unlhd_K Y$, if there exists a linear map $S$ such that $X$ is $K$-complemented in $Y$ via $S$. If the constant $K$ is irrelevant, we will say that $X$ is complemented in $Y$ and write $X\unlhd Y$. If $X$ is isomorphic to a (not necessarily complemented) subspace of $Y$ we will put $X\lesssim Y$.
\end{Definition}
Of course, a quasi-Banach space $X$ is complemented in a quasi-Banach space $Y$ if and only if there is a quasi-Banach space $X_0$ such that $Y\simeq X\oplus X_0$.

\subsection{Complementable $p$-amenability.} Suppose $\MM$ and $\NN$ are $p$-metric spaces, $0<p\le 1$. If
$f\colon\MM\to\NN$ is Lipschitz, there is unique bounded linear map $L_f\colon\F_p(\MM)\to\F_p(\NN)$ such that $L_f(\delta_\MM(x)=\delta_\NN(f(x))$ for all $x\in\MM$. The operator $L_f$ is called the canonical linearization of $f$. If $\NN$ is a subset of $\MM$ and $p=1$, the canonical linearization $L_\jmath\colon \F(\NN) \to \F(\MM)$ of the incluison map $\jmath\colon\NN\to\MM$ is an isometric embedding. As this property does not carry out to Lipschitz free $p$-spaces for $p<1$ (see \cite{AACD2018}*{Theorem 6.1 and Question 6.2}), we introduce the following definition.

\begin{Definition} We say that a subset $\NN$ of a $p$-metric space $\MM$ is \emph{$p$-amenable} in $\MM$ with constant $C<\infty$ if $L_{\jmath}$ is an isomorphism and $\Vert L_\jmath^{-1}\Vert\le C$, where $L_\jmath$ is the canonical linearization of the inclusion map $\jmath\colon\NN\to\MM$.
\end{Definition}

Note that $L_\jmath(\F_p(\NN))$ need not be a complemented subspace of $\F(\MM)$ even in the case $p=1$, when every subset $\NN$ of a metric space $\MM$ is $p$-amenable with constant $1$. Thus, we introduce the complemented version of the notion of amenability.

\begin{Definition} We say that a subset $\NN$ of a $p$-metric space $\MM$ is \emph{complementably $p$-amenable} in $\MM$ with constant $C$ if $\F_p(\NN)$ is $C$-complemented in $\F_p(\MM)$ via the canonical linear map from $\F_p(\NN)$ into $\F_p(\MM)$.
\end{Definition}
If $\NN$ is complementably $p$-amenable in $\MM$ with constant $C$, then $\NN$ is $p$-amenable in $\MM$ with constant $C$. If there is a Lipschitz map $r\colon\MM\to\NN$ with $r\circ\jmath=\Id_\NN$, i.e., $r$ is a Lipschitz retraction of $\MM$ onto $\NN$, then $\NN$ is complementably $p$-amenable in $\MM$ with constant $\Vert r\Vert_\Lip$ for every $0<p\le 1$ (see \cite{AACD2018}*{Lemma 4.19}).

If $\NN$ is (complementably) $p$-amenable in $\MM$ with constant $C$, then $\NN$ is (complementably) $p$-amenable in $\MM'$ with constant $C$ for every $\NN\subset\MM'\subset\MM$. Finally, note that if $\NN$ is (complementably) $p$-amenable in $\MM$ with constant $C$, then the same holds for $\overline{\NN}$, which easily implies that for any $\NN'\subset \MM$ with $\overline{\NN'} = \overline{\NN}$, $\NN'$ is (complementably) $p$-amenable in $\MM$ with constant $C$ as well.

The following elementary lemma characterizes complementable $p$-amenability and will be frequently used.
\begin{Lemma}\label{lem:complementedCondition}
Let $(\MM,d,0)$ be a pointed $p$-metric space. Suppose that $\NN$ is a subset of $\MM$ with $0\in\NN$. Then $\NN$ is complementably $p$-amenable in $\MM$ with constant $C>0$ if and only if there exists a $C$-Lipschitz function $f\colon\MM\to \F_p(\NN)$ such that $f(x)=\delta_\NN(x)$ for every $x\in \NN$.
\end{Lemma}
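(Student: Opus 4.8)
The whole statement is a dictionary entry for the linearization functor, so the plan is to translate back and forth between Lipschitz maps out of $\MM$ and bounded linear maps out of $\F_p(\MM)$ via the universal property of the free $p$-space: every Lipschitz map $g\colon\MM\to X$ into a $p$-Banach space $X$ with $g(0)=0$ factors as $g=\widehat g\circ\delta_\MM$ for a unique bounded linear $\widehat g\colon\F_p(\MM)\to X$ with $\Vert\widehat g\Vert=\Vert g\Vert_\Lip$. Throughout I would use three standing facts: $\delta_\MM$ and $\delta_\NN$ are isometric embeddings (in particular $1$-Lipschitz with $\delta_\NN(0)=0$); the span of $\{\delta_\NN(x)\colon x\in\NN\}$ is dense in $\F_p(\NN)$; and $L_\jmath\circ\delta_\NN=\delta_\MM|_\NN$ with $\Vert L_\jmath\Vert=\Vert\jmath\Vert_\Lip=1$ (here I apply the universal property to $\jmath$ itself and use that $\jmath$ is isometric).

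For the ``only if'' direction I would assume $\NN$ is complementably $p$-amenable in $\MM$ with constant $C$, so by definition there is a bounded linear $T\colon\F_p(\MM)\to\F_p(\NN)$ with $T\circ L_\jmath=\Id_{\F_p(\NN)}$ and $\Vert L_\jmath\Vert\,\Vert T\Vert\le C$. Since $\Vert L_\jmath\Vert=1$ this gives $\Vert T\Vert\le C$. Then I would simply set $f:=T\circ\delta_\MM\colon\MM\to\F_p(\NN)$, which is Lipschitz with $\Vert f\Vert_\Lip\le\Vert T\Vert\,\Vert\delta_\MM\Vert_\Lip\le C$; and for $x\in\NN$ one computes $f(x)=T(\delta_\MM(x))=T(L_\jmath(\delta_\NN(x)))=\delta_\NN(x)$, which is exactly the required interpolation condition.

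For the ``if'' direction I would start from the given $C$-Lipschitz $f\colon\MM\to\F_p(\NN)$ with $f(x)=\delta_\NN(x)$ on $\NN$. Because $0\in\NN$ we have $f(0)=\delta_\NN(0)=0$, so the universal property furnishes a bounded linear $T:=\widehat f\colon\F_p(\MM)\to\F_p(\NN)$ with $T\circ\delta_\MM=f$ and $\Vert T\Vert=\Vert f\Vert_\Lip\le C$. It then remains to verify $T\circ L_\jmath=\Id_{\F_p(\NN)}$: both sides are bounded linear maps on $\F_p(\NN)$, so it suffices to check them on the dense set $\{\delta_\NN(x)\colon x\in\NN\}$, where $T(L_\jmath(\delta_\NN(x)))=T(\delta_\MM(x))=f(x)=\delta_\NN(x)$. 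Finally $\Vert L_\jmath\Vert\,\Vert T\Vert\le 1\cdot C=C$, so $\F_p(\NN)$ is $C$-complemented in $\F_p(\MM)$ via $L_\jmath$, i.e. $\NN$ is complementably $p$-amenable with constant $C$.

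I do not expect a genuine obstacle here, as the two directions are mutually inverse instances of the same correspondence; the only points demanding a little care are the density argument establishing $T\circ L_\jmath=\Id$ and the bookkeeping of constants. The identity $\Vert L_\jmath\Vert=1$ is precisely what lets the single constant $C$ serve in both directions, and the hypothesis $0\in\NN$ is needed only to guarantee $f(0)=0$ so that the universal property applies (the degenerate case $\NN=\{0\}$, where $\F_p(\NN)=0$, is trivial).
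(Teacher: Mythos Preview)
Your proof is correct and follows essentially the same approach as the paper: the paper declares the ``only if'' direction trivial and proves the ``if'' direction exactly as you do, by applying the universal property to $f$ to obtain a linear $L_f$ and then checking $L_f\circ L_\jmath=\Id_{\F_p(\NN)}$ on the Dirac masses. You have simply spelled out the trivial direction (via $f:=T\circ\delta_\MM$) and the bookkeeping of constants more explicitly than the paper does.
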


\begin{proof}One implication is trivial, so let us assume the existence of a $C$-Lipschitz function $f\colon\MM\to \F_p(\NN)$ with $f(x)=\delta_\NN(x)$ for all $x\in \NN$. By the universal property of $\F_p(\MM)$, there exists a linear operator $L_{f}\colon\F_p(\MM)\to \F_p(\NN)$ with $L_{f}\circ \delta_\MM = f$ and $\|L_{f}\| = \|f\|_{\Lip}$. We have $L_{f}\circ L_{\jmath}(\delta_\NN(x)) = f(x) = \delta_\NN(x)$ for all $x\in\NN$ and so $L_{f}\circ L_{\jmath} = \Id_{\F_p(\NN)}$.
\end{proof}

Note that if $\NN$ and $\MM$ are metric spaces, then by \cite{AmbrosioPuglisi2016}*{Definition 2.9}, $\NN$ admits a $K$-random projection on $\MM$ if there exists a $K$-Lipschitz function $f\colon\MM\to \F(\NN)$ such that $f(x)=\delta_\NN(x)$ for all $x\in \NN$. Hence, by Lemma~\ref{lem:complementedCondition}, $\NN$ is complementably $1$-amenable in $\MM$ with constant $K$ if and only if $\NN$ admits $K$-random projection on $\MM$.

When we are interested in studying the isomorphic structure of a Lipschitz free $p$-space over a $p$-metric space $\MM$, by the following result we may choose to omit finitely many points from $\MM$ without altering the resulting Lipschitz free $p$-space over $\MM$. Moreover, when discarding one point only, we obtain a uniform estimate independent on the $p$-metric space (the exact value of the constant will not be important).

\begin{Lemma}\label{lem:separatedpoint}Let $(\MM,d)$ be a $p$-metric space, $0<p\leq 1$, and let $x_0\in\MM$. For every $C>3^{1/p}\cdot 5^{2/p}$ we have:
\begin{enumerate}[label={(\roman*)}, leftmargin=*,widest=iii]
\item\label{it:omitPoint} $\MM\setminus\{x_0\}$ is complementably $p$-amenable in $\MM$ with constant $C$.
\item\label{it:omitPoint1dim} If $x_0\in\MM$ is an isolated point, or $\MM$ is a metric space, then $\F_p(\MM)\simeq_{C} \Rea\oplus\F_p(\MM\setminus\{x_0\})$.
\item\label{it:omitPointIso} If $\MM$ is infinite metric space, then $\F_p(\MM)\simeq_{C} \F_p(\MM\setminus\{x_0\})$.
\item\label{it:addPoint} If $\NN\subset \MM$ is complementably $p$-amenable in $\MM$ with constant $K$ then $\NN\cup\{x_0\}$ is complementably $p$-amenable in $\MM$ with constant $KC$.
\end{enumerate}
\end{Lemma}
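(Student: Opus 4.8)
The plan is to deduce all four assertions from Lemma~\ref{lem:complementedCondition}, so that each one reduces to producing a suitable Lipschitz map into a Lipschitz free $p$-space. Since a Lipschitz free $p$-space does not depend on the choice of base point, I may assume throughout that $x_0\neq 0$, so that $0\in\MM\setminus\{x_0\}$ and the canonical maps are defined. I record the quantity $r=\inf\{d(x_0,y):y\in\MM\setminus\{x_0\}\}$ and split according to whether $x_0$ is isolated ($r>0$) or not ($r=0$). For \ref{it:omitPoint} I would build $f\colon\MM\to\F_p(\MM\setminus\{x_0\})$ equal to $\delta_{\MM\setminus\{x_0\}}$ off $x_0$. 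If $r=0$, put $f(x_0)=\lim_{y\to x_0}\delta_{\MM\setminus\{x_0\}}(y)$; the limit exists because $\|\delta(y)-\delta(y')\|=d(y,y')$, and continuity of $d$ gives $\|f(x_0)-\delta_{\MM\setminus\{x_0\}}(x)\|=d(x_0,x)$, so $f$ is $1$-Lipschitz. If $r>0$, choose $y_0$ with $d(x_0,y_0)\le 2r$ and put $f(x_0)=\delta_{\MM\setminus\{x_0\}}(y_0)$; the $p$-triangle inequality together with $d(x_0,x)\ge r$ yields $d(y_0,x)\le(2^p+1)^{1/p}d(x_0,x)$, so $\|f\|_{\Lip}\le 3^{1/p}<C$. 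In either case Lemma~\ref{lem:complementedCondition} proves \ref{it:omitPoint}.

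For the isolated case of \ref{it:omitPoint1dim} I would analyse the projection attached to the above $f$. Let $L_f$ and $L_\jmath$ be the linearizations of $f$ and of the inclusion, and set $P=L_\jmath L_f$, a bounded projection. Since $(\Id-P)\delta_\MM(x)=0$ for $x\neq x_0$ while $v:=(\Id-P)\delta_\MM(x_0)=\delta_\MM(x_0)-\delta_\MM(y_0)\neq 0$, it follows that $\ker L_f=\operatorname{ran}(\Id-P)=[v]\cong\Rea$. Writing $(\Id-P)\mu=\lambda(\mu)v$, the functional $\lambda$ is induced by the indicator function of $x_0$, which is $(1/r)$-Lipschitz, so $\|(\Id-P)\mu\|=|\lambda(\mu)|\,\|v\|\le (1/r)(2r)\|\mu\|=2\|\mu\|$. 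The maps $S\mu=(L_f\mu,\lambda(\mu)\|v\|)$ and $T(\nu,t)=L_\jmath\nu+(t/\|v\|)v$ are then mutually inverse, and the $p$-sum estimates $\|S\|\le(2^{p+1}+1)^{1/p}\le 5^{1/p}$ and $\|T\|\le 1$ give $\F_p(\MM)\simeq_{5^{1/p}}\Rea\oplus\F_p(\MM\setminus\{x_0\})$.

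When $\MM$ is a metric space and $x_0$ is not isolated, the limit construction of \ref{it:omitPoint} forces $L_\jmath f(x_0)=\delta_\MM(x_0)$, hence $v=0$ and $\ker L_f=0$; thus $L_\jmath$ is onto and $\F_p(\MM)\simeq_1\F_p(\MM\setminus\{x_0\})$. This already settles the non-isolated case of \ref{it:omitPointIso}, and it reduces the metric case of \ref{it:omitPoint1dim}, as well as the isolated case of \ref{it:omitPointIso} (via the decomposition of the previous paragraph), to the single absorption statement $\F_p(\NN)\simeq\Rea\oplus\F_p(\NN)$ for an infinite metric space $\NN$ (here $\NN=\MM\setminus\{x_0\}$, infinite in both situations). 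For \ref{it:addPoint} I would first dispose of the case $x_0\in\overline{\NN}$: then $\overline{\NN\cup\{x_0\}}=\overline{\NN}$, and the closure remark preceding Lemma~\ref{lem:complementedCondition} gives complementable $p$-amenability with constant $K\le KC$. Otherwise $s:=d(x_0,\NN)>0$, and, denoting by $g$ a $K$-Lipschitz map realizing the $p$-amenability of $\NN$, I would glue $L_\jmath\circ g$ to the constant value $\delta_{\NN\cup\{x_0\}}(x_0)$ through a $(2/s)$-Lipschitz cutoff $\theta$ supported on $\{d(\cdot,x_0)<s\}$, setting $h=(1-\theta)L_\jmath g+\theta\,\delta_{\NN\cup\{x_0\}}(x_0)$. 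A $p$-triangle estimate bounding $\|\delta(x_0)-L_\jmath g(x)\|$ by a multiple of $sK$ on the support of $\theta$ shows $\|h\|_{\Lip}\le(\text{const})^{1/p}K\le KC$, and Lemma~\ref{lem:complementedCondition} yields \ref{it:addPoint}.

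The main obstacle is the absorption $\F_p(\NN)\simeq\Rea\oplus\F_p(\NN)$ for an infinite metric space $\NN$, on which the metric case of \ref{it:omitPoint1dim} and the isolated case of \ref{it:omitPointIso} rest. I would prove it by locating a complemented copy of $\ell_p$ inside $\F_p(\NN)$: extracting from $\NN$ either a sequence of distinct points converging fast to a limit or a uniformly separated sequence, the associated normalized difference molecules span a complemented subspace isomorphic to $\ell_p$, and $\ell_p\simeq\Rea\oplus\ell_p$ transfers to $\F_p(\NN)$. The remaining delicate point is purely quantitative: one must check that the constant of this complemented $\ell_p$, composed with the estimates $3^{1/p}$ of \ref{it:omitPoint} and $5^{1/p}$ of \ref{it:omitPoint1dim}, stays below the uniform threshold $3^{1/p}5^{2/p}$ claimed in the statement.
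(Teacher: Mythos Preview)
Your approach follows the same skeleton as the paper's: reduce each item to Lemma~\ref{lem:complementedCondition}, split into isolated and non-isolated cases, and for the absorption $\F_p(\NN)\simeq\Rea\oplus\F_p(\NN)$ invoke a complemented copy of $\ell_p$ inside $\F_p(\NN)$ (this is precisely \cite{AACD2019}*{Theorem~3.1}, which the paper cites to get constant $\to 5^{2/p}$). The only structural differences are that for \ref{it:omitPoint1dim} the paper does not build $S$ and $T$ by hand but appeals to \cite{AACD2019}*{Lemma~2.1} after establishing $d^p(x_0,0)+d^p(x,0)\le(1+2(1+\varepsilon)^p)d^p(x_0,x)$, and that for \ref{it:addPoint} the paper sets $f'(x)=L_\jmath f(x)$ for $x\neq x_0$ and $f'(x_0)=\delta(x_0)$ directly, without a cutoff; your cutoff construction is a legitimate alternative and actually handles the case where $x_0$ is an accumulation point of $\MM$ (but $x_0\notin\overline{\NN}$) more transparently.

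The one genuine problem is the quantitative point you flag at the end. With $d(x_0,y_0)\le 2r$ your map $f$ is $(1+2^p)^{1/p}$-Lipschitz and $\|v\|\le 2r$, so $\|\Id-P\|\le 2$; this gives $\|S\|^p\le 3+2^p\le 5$ for \ref{it:omitPoint1dim} (your displayed bound $(2^{p+1}+1)^{1/p}$ is a slip, though harmless). Chaining with the $5^{2/p}$ absorption then yields $5^{3/p}=125^{1/p}$ for \ref{it:omitPointIso}, which overshoots the target $3^{1/p}5^{2/p}=75^{1/p}$. The fix is immediate: choose $y_0$ with $d(x_0,y_0)<(1+\varepsilon)r$ instead of $\le 2r$. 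Then $\|f\|_{\Lip}\le(1+(1+\varepsilon)^p)^{1/p}$ and $\|\Id-P\|\le 1+\varepsilon$, so your \ref{it:omitPoint1dim} constant becomes $(1+2(1+\varepsilon)^p)^{1/p}\to 3^{1/p}$, matching the paper, and the composition for \ref{it:omitPointIso} lands exactly at $3^{1/p}\cdot 5^{2/p}$.
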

\begin{proof}
Pick $\varepsilon\in (0,2^{1/p}-1)$. If $x_0\in\MM$ is an isolated point, we pick $0\in\MM\setminus\{x_0\}$ with $d(x_0,0) < (1+\varepsilon)^{1/p}\inf\{d(x_0,x)\colon x\in \MM\setminus\{x_0\}\}$.

\ref{it:omitPoint}
If $x_0\in\MM$ is not an isolated point then $\MM\setminus\{x_0\}$ is complementably $p$-amenable in $\MM$ with constant $1$, otherwise it is easy to see that the map $f\colon\MM\to\F_p(\MM\setminus\{x_0\})$ given by $f(x)=\delta_{\MM\setminus\{x_0\}}(x)$ for $x\in \MM\setminus\{x_0\}$ and $f(x_0)=0$ is $2^{1/p}$-Lipschitz which, using Lemma~\ref{lem:complementedCondition}, gives \ref{it:omitPoint}.

\ref{it:omitPoint1dim} and \ref{it:omitPointIso} If $x_0\in\MM$ is isolated then for all $x\in\MM\setminus\{0\}$ we have
\begin{equation}\label{eq:sum}
d^p(x_0,0)+d^p(x,0)\le d^p(x_0,x)+2d^p(x_0,0)\le (1+2(1+\varepsilon)^p)d^p(x_0,x).
\end{equation}
which, by \cite{AACD2019}*{Lemma 2.1}, implies that $\F_p(\MM)\simeq_{C'}\Rea\oplus \F_p(\MM\setminus\{x_0\})$ for $C'=(1+2(1+\varepsilon)^p)^{1/p}$.

If $x_0\in\MM$ is not an isolated point then $\F_p(\MM)$ is isometric to $\F_p(\MM\setminus\{x_0\})$.

In the case when $\MM$ is a metric space then, by \cite{AACD2019}*{Theorem 3.1}, $\ell_p$ is $D$-complemented in $\F_p(\MM\setminus\{x_0\})$ for every $D>2^{1/p}$ and so $\F_p(\MM\setminus\{x_0\})$ is $(1+2D^p)^{1/p}$-isomorphic to $\ell_p\oplus \F_p(\MM\setminus\{x_0\})$ which easily implies that $\F_p(\MM\setminus\{x_0\})\simeq_{D'} \Rea\oplus\F_p(\MM\setminus\{x_0\})$ for every $D'>(1+2D^p)^{2/p}$.

\ref{it:addPoint} is easy if $x_0\in \overline{\NN}$, so we may assume that this is not the case. By Lemma~\ref{lem:complementedCondition}, there is a $K$-Lipschitz map $f\colon\MM\to\F_p(\NN)$ with $f(x)=\delta_\NN(x)$ for every $x\in\NN$. Consider $f'\colon\MM\to\F_p(\NN\cup\{x_0\})$ defined as $f'(x) = L_{\jmath}(f(x))$ for $x\in\MM\setminus\{x_0\}$, where $L_{\jmath}\colon\F_p(\NN)\to\F_p(\NN\cup\{x_0\})$ is the canonical linear map and $f'(x_0)=\delta_{\NN\cup\{x_0\}}(x_0)$. Now, using \eqref{eq:sum} we readily check that $f'$ is $(K(1+2(1+\varepsilon)^p)^{1/p})$-Lipschitz, and an application of Lemma~\ref{lem:complementedCondition} finishes the proof.
\end{proof}

\subsection{The Approximation Property in quasi-Banach spaces}
A quasi-Banach space $X$ is said to have the \emph{approximation property} (AP for short) if there exists a net $(T_\alpha)_{\alpha\in\A}$ of finite-rank operators on $X$ that converge to the identity map $\Id_X$ uniformly on compact sets. If moreover, the net satisfies
\begin{equation}\label{eq:netbdd}
\liminf_\alpha \Vert T_\alpha\Vert \le \lambda,
\end{equation}
 for some constant $\lambda\in[1,\infty)$, then we say that $X$ has \emph{the $\lambda$-bounded approximation property} ($\lambda$-BAP for short). If, in addition to
 \eqref{eq:netbdd} he operators of the net commute, i.e., $T_\alpha\circ T_\beta=T_\beta\circ T_\alpha$ for all $\alpha$, $\beta\in A$, then $X$ is said to have the \emph{commuting $\lambda$-bounded approximation property} (commuting $\lambda$-BAP for short). In turn, if in addition to \eqref{eq:netbdd}, the operators are projections, i.e., $T_\alpha^2=T_\alpha$ for all $\alpha\in A$, we say that $X$ has the \emph{$\pi_\lambda$-property.} We will refer to $X$ simply as having BAP if it has the $\lambda$-BAP for some $\lambda>0$, and we will say that $X$ has \emph{the metric approximation property} (MAP for short) if it has the $1$-BAP. Similarly, if $X$ has the $\pi_\lambda$-property for some $\lambda$ we say that $X$ has the $\pi$-property, and we say that $X$ is the metric $\pi$-property if it has de $\pi$-property. It is obvious that any finite-dimensional quasi-Banach space has the commuting MAP and the metric $\pi$-property.

A \emph{finite dimensional decomposition} of a (separable) quasi-Banach space $X$ is a sequence $(X_n)_{n=1}^\infty$ of finite-dimensional subspaces of $X$ such that every $x\in X$ has a unique series expansion $x=\sum_{n=1}^\infty x_n$ with $x_n\in X_n$ for all $n\in\Nat$. If $X$ admits afinite dimensional decomposition we say that it has the \emph{finite dimensional decomposition property} (FDD property for short). A quasi-Banach space with the FDD property has both the MAP and the metric $\pi$-property under a suitable renorming.

For background on approximation properties we refer the reader to \cite{CasHandbook}. We would like to point out, however, that \cite{CasHandbook} is written within the framework of Banach spaces and, since the Hahn-Banach theorem is heavily used, the proofs of some of the results therein do not carry out automatically to non-locally convex quasi-Banach spaces. In spite of that initial drawback,  it happens that the dependence on the local convexity of the space
 in the proofs of the most accessible characterizations of the above-mentioned approximation properties  can be circumvented, and so they still hold in non-locally convex spaces. As the definitions that appear in the few papers  that touch the subject (\cites{Kalton1977c,Kalton1984,CCM19}) are not unified, we will state the corresponding characterizations and skip the details in order to not divert too far from the flow of the article. A quasi-Banach space $X$ has the AP if and only if for any compact set $K\subset X$ and every $\varepsilon>0$ there is a finite-rank operator $T\colon X\to X$ with $\max_{x\in K} \Vert x-T(x)\Vert \le \varepsilon$. The space $X$ has the $\lambda$-BAP if and only if for any $\varepsilon>0$ such operators exist with the extra-property that $\Vert T \Vert <\lambda+\varepsilon$. To characterize the $\pi_\lambda$-property we must also impose
each operator $T$ to be a projection; and to characterize commuting $\lambda$-BAP we must impose that the operators belong to a given commuting set. A subset $\A$ of the algebra of bounded linear operators from $X$ into $X$ is said to be \emph{commuting} if $T\circ S=S\circ T$ for all $S$, $T\in \A$. These results yield that if a quasi-Banach space $X$ has the BAP (resp. $\pi$-property) there is a smallest constant $\lambda$ such that $X$ has the $\lambda$-BAP (resp. $\pi_\lambda$-property).

Approximation properties are inherited by infinite sums and complemented subspaces. For further reference we record these in the next two propositions.\begin{Proposition}\label{APP1} Let $(X_n)_{n=1}^\infty$ be a sequence of quasi-Banach spaces with $\sup_n \kappa_2(X_n)<\infty$. Suppose that each $X_n$ has the AP (resp. $\lambda$-BAP, $\pi_\lambda$-property, or commuting $\lambda$-BAP for some $\lambda\ge 1$). Then $\left(\bigoplus_{n=1}^\infty X_n\right)_p$ has the AP (resp. $\lambda$-BAP, $\pi_\lambda$-property, or commuting $\lambda$-BAP) for all $0\le p<\infty$.
\end{Proposition}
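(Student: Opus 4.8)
The plan is to reduce the approximation property of $Y:=\left(\bigoplus_{n=1}^\infty X_n\right)_p$ to that of the individual summands by exploiting the block structure of the $\ell_p$-sum (the $c_0$-sum when $p=0$). Write $\kappa:=\sup_n\kappa_2(X_n)<\infty$. First I would record that this hypothesis guarantees that $Y$ is a genuine quasi-Banach space with $\kappa_2(Y)<\infty$, so that all the characterizations of the approximation properties recalled above are available in $Y$. Two features of the sum are decisive. On the one hand, the head projections $P_N\colon Y\to Y$, $P_N\big((x_n)_n\big)=(x_1,\dots,x_N,0,0,\dots)$, satisfy $\|P_N\|\le 1$ for every $N$. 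On the other hand, a block-diagonal operator $T=\bigoplus_{n=1}^N T_n$, understood as $T\big((x_n)_n\big)=(T_1x_1,\dots,T_Nx_N,0,\dots)$, obeys the sharp bound $\|T\|\le\max_{1\le n\le N}\|T_n\|$ rather than a sum of the $\|T_n\|$; this is exactly what will let the constant $\lambda$ survive in the bounded versions.

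The first genuine step is a uniform tail estimate: for every compact $K\subset Y$ one has $\sup_{x\in K}\|x-P_Nx\|\to 0$ as $N\to\infty$. I would prove this by a standard $\varepsilon$-net argument. Cover $K$ by finitely many balls of radius $\varepsilon$ centred at $y_1,\dots,y_m\in Y$; since each fixed $y_i$ has $\|y_i-P_Ny_i\|\to0$, choose $N$ with $\|y_i-P_Ny_i\|<\varepsilon$ for all $i$, and then for $x\in K$ pick $y_i$ with $\|x-y_i\|<\varepsilon$ and iterate the quasi-triangle inequality, using $\|P_N(x-y_i)\|\le\|x-y_i\|$, to bound $\|x-P_Nx\|$ by a fixed multiple of $\varepsilon$ depending only on $\kappa_2(Y)$. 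I would also note that each coordinate slice $K_n:=\{x_n:x\in K\}$ is compact in $X_n$, being the image of $K$ under the bounded coordinate projection.

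Now fix a compact $K\subset Y$ and $\varepsilon>0$. Using the tail estimate, choose $N$ so that $\sup_{x\in K}\|x-P_Nx\|$ is as small as we please. For each $n\le N$ I would apply the relevant approximation property of $X_n$ to the compact set $K_n$ to obtain a finite-rank operator $T_n$ on $X_n$ with $\sup_{y\in K_n}\|y-T_ny\|$ small, and set $T=\bigoplus_{n=1}^N T_n$, which is finite-rank. The error then splits as
\[
\|x-Tx\|\le\kappa_2(Y)\big(\|x-P_Nx\|+\|P_Nx-Tx\|\big),
\]
where $\|P_Nx-Tx\|=\big(\sum_{n=1}^N\|x_n-T_nx_n\|^p\big)^{1/p}$ (a supremum if $p=0$); since only finitely many summands occur, making each coordinate error small forces the whole expression below $\varepsilon$. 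This settles the AP. For the $\lambda$-BAP I would additionally require $\|T_n\|<\lambda+\varepsilon$ from each $X_n$, whence $\|T\|\le\max_{n\le N}\|T_n\|<\lambda+\varepsilon$, so the constant is preserved. For the $\pi_\lambda$-property I would take each $T_n$ to be a projection; then $T^2=\bigoplus_{n=1}^N T_n^2=T$ is a finite-rank projection with the same norm. For commuting $\lambda$-BAP, let $\A_n$ be the commuting set witnessing the property of $X_n$; the set $\A:=\big\{\bigoplus_{n=1}^N S_n: N\in\Nat,\ S_n\in\A_n\big\}$ (extended by $0$) is commuting, because two block-diagonal operators commute as soon as their corresponding blocks commute while any block beyond the shorter length meets a zero block, and the approximants just constructed lie in $\A$. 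The relevant characterizations from the preliminaries then deliver each of the four conclusions.

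The step I expect to require the most care is keeping the operator-norm bound from deteriorating, and this is precisely where the $\ell_p$/$c_0$ structure is indispensable: it yields $\|T\|\le\max_n\|T_n\|$, so a single constant $\lambda$ passes to the sum, whereas the approximation error is permitted to accumulate only over the finitely many retained coordinates and is therefore harmless. The two places where the hypotheses genuinely enter are the verification that $Y$ is quasi-Banach with $\kappa_2(Y)<\infty$ (which uses $\sup_n\kappa_2(X_n)<\infty$) and the uniform tail estimate; the remainder is routine bookkeeping carried out in parallel across the four cases.
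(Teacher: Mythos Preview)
Your argument is correct and is precisely the standard one. Note, however, that the paper does not actually supply a proof of this proposition: it is stated without proof, being ``recorded'' as a well-known fact about approximation properties passing to direct sums. Your write-up is therefore not to be compared against anything in the paper; it simply fills in the omitted details, and does so soundly (the two crucial points---the block-diagonal norm bound $\|\bigoplus_{n\le N}T_n\|\le\max_{n\le N}\|T_n\|$ and the uniform tail estimate on compacta---are exactly what is needed, and your treatment of the commuting case is fine since block-diagonal operators with commuting blocks commute regardless of the lengths involved).
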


\begin{Proposition}\label{APP2} Let $Y$ be a quasi-Banach space with the AP (resp. $\lambda$-BAP for some $\lambda\ge 1$). Suppose that a quasi-Banach space $X$ is $K$-complemented in $Y$. Then $X$ has the AP (resp. $\lambda K$-BAP).
\end{Proposition}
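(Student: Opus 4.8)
The plan is to exploit the factorization of $X$ through $Y$ that the complementation hypothesis provides. By the definition of $X\unlhd_K Y$, I may fix bounded linear maps $S\colon X\to Y$ and $T\colon Y\to X$ with $T\circ S=\Id_X$ and $\Vert S\Vert\,\Vert T\Vert\le K$. Both assertions will then follow from the intrinsic characterizations of the AP and the $\lambda$-BAP recorded earlier in this section, by transporting an approximating operator on $Y$ to one on $X$ through the sandwich $R=T\circ U\circ S$.

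For the AP, I would fix a compact set $L\subset X$ and $\varepsilon>0$, and observe that $S(L)$ is a compact subset of $Y$ because $S$ is bounded and linear. Since $Y$ has the AP, the characterization supplies a finite-rank operator $U\colon Y\to Y$ with $\sup_{y\in S(L)}\Vert y-U(y)\Vert\le\eta$ for a prescribed $\eta>0$. Setting $R=T\circ U\circ S$, I note that $R(X)\subseteq T(U(Y))$ is finite-dimensional, so $R$ has finite rank. The key computation uses the identity $T\circ S=\Id_X$: for $x\in L$ one has $x-R(x)=T\bigl(Sx-U(Sx)\bigr)$, so that $\Vert x-R(x)\Vert\le\Vert T\Vert\,\eta$. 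Choosing $\eta\le\varepsilon/\Vert T\Vert$ (the case $X=\{0\}$ being trivial) yields $\sup_{x\in L}\Vert x-R(x)\Vert\le\varepsilon$, and hence $X$ has the AP.

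For the $\lambda$-BAP the same $R$ works, and I only have to track its norm. Since the operator norm satisfies $\Vert R\Vert\le\Vert T\Vert\,\Vert U\Vert\,\Vert S\Vert\le K\Vert U\Vert$, I would invoke the $\lambda$-BAP of $Y$ to obtain $U$ satisfying simultaneously $\sup_{y\in S(L)}\Vert y-U(y)\Vert\le\eta$ and $\Vert U\Vert<\lambda+\varepsilon'$. Then $\Vert R\Vert<K(\lambda+\varepsilon')$, and picking $\varepsilon'<\varepsilon/K$ gives $\Vert R\Vert<\lambda K+\varepsilon$, which is precisely the bound demanded by the characterization of the $\lambda K$-BAP.

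I do not expect any genuine obstacle here; the content is the factorization idea, and the rest is bookkeeping. The only point requiring a little care is that $Y$ is merely quasi-Banach, so the triangle inequality is not at our disposal. This is circumvented because the single norm estimate we need passes through the exact identity $T\circ S=\Id_X$ — which involves no sum of two vectors — and through the submultiplicativity of the operator norm, which holds verbatim in quasi-Banach spaces. Consequently local convexity is never used, and the approximation property descends cleanly from $Y$ to its complemented subspace $X$.
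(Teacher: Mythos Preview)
Your argument is correct and is the standard one. The paper itself does not supply a proof of this proposition; it records it as a well-known fact ``for further reference,'' so there is nothing to compare against beyond noting that your sandwich $R=T\circ U\circ S$ is exactly the expected construction, and your remark that the identity $T\circ S=\Id_X$ avoids any appeal to the triangle inequality is a nice touch in the quasi-Banach setting.
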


\section{General techniques}\label{sect:generalEllPSum}\noindent
In \cite{Kalton2004}*{$\S$4}, Kalton defined an operator $T$ of norm $72$ which maps any Lipschitz-free space $\F(\MM)$ over a metric space $\MM$ into the $\ell_1$-sum of Lipschitz-free spaces over annuli in $\MM$. This particular operator has seen many applications in the theory of Lipschitz-free spaces. Let us single out the most significant ones.
\begin{enumerate}[label={$\bullet$},leftmargin=*]
\item If $\MM$ is a uniformly separated metric space, then $\F(\MM)$ is a Schur space with the Radon-Nikodym property and the approximation property (\cite{Kalton2004}*{Proposition 4.3}).
\item If $\MM$ is a countable compact metric space (or even a countable metric space whose closed balls are compact) then $\F(\MM)$ has the metric approximation property (\cites{Da15, D15}) and the Schur property (\cite{HLP16}).
\item If $X$ is a Banach space then $\F(X)\simeq\ell_1(\F(X))\simeq \F(B_X)$ (\cite{K15}).
\end{enumerate}
This section is geared towards developing an extended version of this operator that works both for $p<1$ and also for the case $p=1$. This is the subject of Lemma~\ref{lem:KaltonAnologyGeneralized}. Our main outcomes will be a couple of complementability results (namely, Theorem~\ref{thm:complemented} and Theorem~\ref{thm:reversecomplementedGerenalization}) as well as several general conditions under which Lipschitz free $p$-spaces over metric spaces are isomorphic to their $\ell_p$-sums (Theorem~\ref{thm:sumisomorphimGeneralized}, Theorem~\ref{thm:sumisomorphimGeneralized:1}, and Theorem~\ref{thm:sumisomorphimGeneralized:2}). In subsequent sections we will concentrate on new applications to the general theory, some of which were not known even for the case $p=1$.

Let $(\MM,d)$ be a $p$-metric space, $0<p\le 1$. If $(\MM_\alpha)_{\alpha\in\Delta}$ is a family of subsets of $\MM$ with $0\in\MM_\alpha$ for all $\alpha\in\Delta$, we can define a norm-one linear operator
\begin{equation}\label{eq:5}
P\colon \left( \bigoplus_{\alpha\in\Delta}\F_p(\MM_{\alpha})\right)_p \to \F_p(\MM),\quad (\mu_\alpha)_{\alpha\in\Delta} \mapsto \sum_{\alpha\in\Delta} L_\alpha(\mu_\alpha),
\end{equation}
where $L_\alpha$ is the canonical linear embedding from $\F_p(\MM_\alpha)$ into $\F_p(\MM)$. We aim to relate the Lipschitz free space over a $p$-metric space $\MM$ to direct sums of Lipschitz free $p$-spaces over subsets of $\MM$ by means of such mappings. Our first proposition is inspired by \cite{Kalton2004}*{$\S$4}.

\begin{Proposition}\label{lem:KaltonAnalogyBis}Let $(\MM,d,0)$ be a pointed $p$-metric space, $0<p\le 1$. Suppose that $N$ is a countable subset of $\Int$ and that
$(A_n)_{n\in N}$ is a sequence of positive real numbers such that $d(0,x)\in\cup_{n\in N} A_n$ for all $x\in \MM\setminus\{0\}$, and
\[
K=\inf_{m<n}\frac{\inf A_n}{\sup A_m}>1.
\]
Then the operator $P$ defined as in \eqref{eq:5} corresponding to the family $(\MM_{A_n}^*)_{n\in N}$ is an isomorphism. In fact,
\[
\Vert P^{-1}\Vert \le (K^{p}+1)^{1/p} (K^p-1)^{-1/p}.\]
\end{Proposition}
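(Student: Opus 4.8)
The plan is to exploit that $P$ already has norm one, so that it suffices to construct a bounded linear inverse and estimate its norm; I would build this inverse from a single Lipschitz map by means of the universal property of $\F_p(\MM)$. First observe that the hypothesis $K>1$ forces the sets $(A_n)_{n\in N}$ to be pairwise disjoint: if $m<n$ then every element of $A_n$ is at least $\inf A_n\ge K\sup A_m>\sup A_m$, hence exceeds every element of $A_m$. Consequently each $x\in\MM\setminus\{0\}$ satisfies $d(0,x)\in A_{n(x)}$ for a \emph{unique} index $n(x)\in N$, so every nonzero point has a well-defined ``home annulus'' $\MM_{A_{n(x)}}^*$.

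Next I would define the placement map $g\colon\MM\to\left(\bigoplus_{n\in N}\F_p(\MM_{A_n}^*)\right)_p$ by setting $g(0)=0$ and, for $x\neq0$, letting $g(x)$ be the vector whose $n(x)$-th coordinate equals $\delta_{\MM_{A_{n(x)}}^*}(x)$ and whose remaining coordinates vanish. Assuming $g$ is Lipschitz (the content of the last step), the universal property yields a linear operator $Q:=L_g\colon\F_p(\MM)\to\left(\bigoplus_{n\in N}\F_p(\MM_{A_n}^*)\right)_p$ with $Q\circ\delta_\MM=g$ and $\|Q\|=\|g\|_{\Lip}$. On the dense linear span of the Diracs one checks directly that $Q$ inverts $P$: for $x\neq0$ one has $P(g(x))=L_{n(x)}(\delta_{\MM_{A_{n(x)}}^*}(x))=\delta_\MM(x)$, so $P\circ Q=\Id_{\F_p(\MM)}$, while $Q\circ P$ fixes each coordinate generator $\delta_{\MM_{A_n}^*}(z)$ ($z\in\MM_{A_n}$) of the domain by the same computation, whence $Q\circ P=\Id$. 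By density and boundedness these identities extend to the whole spaces, so $Q=P^{-1}$ and $\|P^{-1}\|=\|g\|_{\Lip}$.

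The heart of the argument, and the only genuine obstacle, is the Lipschitz estimate for $g$. If $x,y$ lie in the same annulus (or one of them is $0$), then $g(x)-g(y)$ is supported in a single coordinate and equals $\delta_{\MM_{A_n}^*}(x)-\delta_{\MM_{A_n}^*}(y)$, whose norm is at most $d(x,y)$ because $\delta$ is $1$-Lipschitz; here the constant is $1$. The decisive case is $x\in\MM_{A_m}$, $y\in\MM_{A_n}$ with $m<n$. Then $g(x)-g(y)$ has nonzero entries in two distinct coordinates, so by the definition of the $\ell_p$-sum and the bound $\|\delta_{\MM_{A_k}^*}(z)\|\le d(0,z)$,
\[
\|g(x)-g(y)\|^p=\|\delta_{\MM_{A_m}^*}(x)\|^p+\|\delta_{\MM_{A_n}^*}(y)\|^p\le d(0,x)^p+d(0,y)^p.
\]
The separation hypothesis gives $d(0,y)\ge\inf A_n\ge K\sup A_m\ge K\,d(0,x)$, whence $d(0,x)^p\le K^{-p}d(0,y)^p$ and the right-hand side is at most $d(0,y)^p(K^p+1)/K^p$. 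On the other hand the $p$-triangle inequality $d(0,y)^p\le d(0,x)^p+d(x,y)^p$ combined with the same estimate yields $d(x,y)^p\ge d(0,y)^p(K^p-1)/K^p$. Chaining these two inequalities produces
\[
\|g(x)-g(y)\|^p\le\frac{K^p+1}{K^p-1}\,d(x,y)^p,
\]
that is, $\|g\|_{\Lip}\le(K^p+1)^{1/p}(K^p-1)^{-1/p}$, which is exactly the asserted bound on $\|P^{-1}\|$. I expect the only delicate point to be correctly exploiting the $\ell_p$-additivity across distinct annuli together with the multiplicative gap $K$; once the cross-annulus case is settled, the remaining cases and the verification that $Q$ is a two-sided inverse are routine.
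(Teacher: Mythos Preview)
Your proof is correct and follows essentially the same route as the paper's: the paper derives the identical cross-annulus inequality $d^p(0,x)+d^p(0,y)\le\frac{K^p+1}{K^p-1}\,d^p(x,y)$ and then invokes \cite{AACD2019}*{Lemma~2.1}, which packages precisely the construction you spell out (the placement map $g$, its linearization via the universal property, and the verification that it inverts $P$). In other words, you have unpacked the cited lemma in this specific setting; the core computation and the overall strategy coincide.
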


\begin{proof} For $n\in N$ put $r_n=\inf A_n$ and $s_n=\sup A_n$. If $m<n$, $x\in \MM_{A_m}$, and $y\in \MM_{A_n}$ we have
\[
d^p(0,x)\le s_m^p=\frac{s_m^p}{r_n^p} r_n^p \le K^{-p} d^p(0,y).
\]
Hence, if we denote $d^p(0,y)/d^p(0,x)=t$,
\[
d^p(0,x) + d^p(0,y)=(d^p(0,y)-d^p(0,x)) \frac{t+1}{t-1}\le d^p(x,y) \frac{K^{p}+1}{K^p-1}.
\]
Since $(\MM_{A_n})_{n\in N}$ is a partition of $\MM\setminus\{0\}$, the result follows from \cite{AACD2019}*{Lemma 2.1}.
\end{proof}

Proposition~\ref{lem:KaltonAnalogyBis} leads us to consider maps as in \eqref{eq:5} corresponding to families of the form
$(\MM_{A_n}^*)_{n\in\Int}$ with $A_n=[cR^{2n},cR^{2n+1}]$ for some $0<c<\infty$ and $R>1$. However, when $p<1$ it is unknown whether the canonical linear map from $\F_p(\cup_{n\in\Int} \MM_{A_n}^*)$ into $\F_p(\MM)$ is an isomorphic embedding, therefore in order to obtain useful information about this kind of mappings we need to develop complementary techniques. Our approach here will consist of building, under suitable conditions on the $p$-metric space $\MM$, both left and right inverses of operators as in \eqref{eq:5} corresponding to families of the form $(\MM_{A_n}^*)_{n\in\Int}$ with $A_n= [cR^{sn},cR^{sn+1}]$ for some $0<c,s<\infty$ and $R>1$.
\begin{Definition} A family $\UU$ of open sets in a topological space $X$ is said to be \emph{$k$-overlapping}
if each $x\in X$ belongs at most to $k$ members of $\UU$. We say that $\UU$ is a \emph{point-finite family} if it is $k$-overlapping for some $k\in\Nat$.
\end{Definition}

\begin{Lemma}\label{lem:KaltonAnologyGeneralized}
Suppose $(\MM,d,0)$ is a pointed metric space. Let $k\in\Nat$, $R>1$, $K_1>0$, and $K_2>0$ be constants such that:
\begin{enumerate}[label={$\bullet$}]
\item $(\psi_n)_{n\in N}$ is a countable family of $K_1$-Lipschitz maps from $\Rea$ into $\Rea$ which are uniformly bounded by $K_2$; and
\item $(I_n)_{n\in N}:=\big(\psi_n^{-1}(\Rea\setminus\{0\})\big)_{n\in N}$ is $k$-overlapping.
\end{enumerate}
Then for each $0<p\le 1$ there is a bounded linear operator
\[
T \colon \F_p(\MM)\to X:=\left( \bigoplus_{n\in N}\F_p\left(\MM^*_{R^{I_n}}\right)\right)_p
\]
with
\begin{equation}\label{eq:7}
T(\delta_\MM(x))=(\psi_n(\log_R d(0,x)) \delta_n(x))_{n\in N},\quad x\in\MM\setminus\{0\},
\end{equation}
where $\delta_n\colon \MM\to\F_p(\MM^*_{R^{I_n}})$ is any extension of the canonical embedding of $\MM^*_{R^{I_n}}$ into the Lipschitz free $p$-space over $\MM^*_{R^{I_n}}$. Moreover, $\|T\|\leq C$ for some $C=C(p,k,R,K_1,K_2)$.
\end{Lemma}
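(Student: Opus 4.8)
The plan is to build $T$ through the universal property of $\F_p(\MM)$. I will define a map $\phi\colon\MM\to X$ by $\phi(0)=0$ and
\[
\phi(x)=\big(\psi_n(\log_R d(0,x))\,\delta_n(x)\big)_{n\in N},\qquad x\in\MM\setminus\{0\},
\]
show it is Lipschitz with constant $C=C(p,k,R,K_1,K_2)$, and set $T=L_\phi$. Exactly as in the proof of Lemma~\ref{lem:complementedCondition}, the universal property then yields a bounded linear $T$ with $T\circ\delta_\MM=\phi$, hence \eqref{eq:7}, and $\|T\|=\|\phi\|_\Lip\le C$. Observe first that $\phi$ does not depend on the chosen extensions $\delta_n$: the $n$-th coordinate of $\phi(x)$ vanishes unless $\log_R d(0,x)\in I_n$, i.e.\ unless $x\in\MM_{R^{I_n}}$, where $\delta_n$ coincides with the genuine canonical embedding; in particular $\|\delta_n(x)-\delta_n(y)\|=d(x,y)$ and $\|\delta_n(x)\|=d(0,x)$ whenever the relevant points lie in $\MM^*_{R^{I_n}}$.

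The elementary estimate driving everything is
\[
|\ln a-\ln b|\cdot\min\{a,b\}\le |a-b|,\qquad a,b>0,
\]
an immediate consequence of $\ln t\le t-1$. Writing $a=d(0,x)$, $b=d(0,y)$ and using $|\log_R a-\log_R b|=(\ln R)^{-1}|\ln(a/b)|$ together with the $K_1$-Lipschitz property of $\psi_n$, this lets me control coefficient differences by $|a-b|\le d(x,y)$.

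To bound $\|\phi(x)-\phi(y)\|_X^p=\sum_{n}\|\psi_n(\log_R a)\delta_n(x)-\psi_n(\log_R b)\delta_n(y)\|^p$ I first note, from the $k$-overlapping hypothesis, that at most $2k$ coordinates are nonzero (at most $k$ indices with $\log_R a\in I_n$, at most $k$ with $\log_R b\in I_n$). For each nonzero coordinate I distinguish two situations. If both $\log_R a,\log_R b\in I_n$, then assuming without loss of generality $a\le b$ I split
\[
\psi_n(\log_R a)\delta_n(x)-\psi_n(\log_R b)\delta_n(y)=\psi_n(\log_R b)(\delta_n(x)-\delta_n(y))+(\psi_n(\log_R a)-\psi_n(\log_R b))\delta_n(x),
\]
and apply $p$-subadditivity of $\|\cdot\|^p$ together with $|\psi_n|\le K_2$, $\|\delta_n(x)-\delta_n(y)\|=d(x,y)$, the coefficient Lipschitz estimate, $\|\delta_n(x)\|=a=\min\{a,b\}$, and the displayed logarithmic inequality; this gives a bound $(K_2^p+(K_1/\ln R)^p)\,d(x,y)^p$. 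If exactly one of $\log_R a,\log_R b$ lies in $I_n$, say $\log_R a\in I_n$, then the coordinate equals $\psi_n(\log_R a)\delta_n(x)$ since $\psi_n(\log_R b)=0$, and I bound $|\psi_n(\log_R a)|\cdot a$ by combining $|\psi_n(\log_R a)|=|\psi_n(\log_R a)-\psi_n(\log_R b)|\le (K_1/\ln R)|\ln(a/b)|$ with the uniform bound $K_2$: in the comparable case $a\le 2b$ the logarithmic inequality applies (up to a factor $2$ when $b<a\le2b$), while in the case $a>2b$ one has $d(x,y)\ge a-b>a/2$, so $a<2d(x,y)$ and the $K_2$ bound suffices. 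In all cases the coordinate is at most $C_0^p\,d(x,y)^p$ for a constant $C_0=C_0(K_1,K_2,R)$.

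Summing over the at most $2k$ nonzero coordinates gives $\|\phi(x)-\phi(y)\|_X\le (2k)^{1/p}C_0\,d(x,y)$; the case $y=0$ is handled directly, since then only the $\le k$ coordinates with $\log_R a\in I_n$ survive, each bounded by $K_2\,a$. Hence $\phi$ is Lipschitz with $C=(2k)^{1/p}C_0$, finishing the construction. The main obstacle is the one-sided coordinates: there the logarithmic estimate degenerates when $a\gg b$, and the whole point is that in that regime $d(x,y)$ is itself comparable to $a$, so the uniform bound $K_2$ takes over; reconciling these two regimes is the only delicate part of the argument.
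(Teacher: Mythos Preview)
Your proof is correct and follows essentially the same route as the paper: define the map pointwise, show it is Lipschitz via the $2k$-overlapping bound, the splitting into a coefficient-difference term and a point-difference term, and the elementary inequality $\ln t\le t-1$, then invoke the universal property of $\F_p(\MM)$. The only cosmetic difference is in the one-sided case, where you use the threshold $a\le 2b$ versus $a>2b$ while the paper uses $d(0,x)\le R\,d(0,y)$ versus $d(0,x)\ge R\,d(0,y)$; both yield a constant of the required form, and the paper simply records it explicitly as
\[
C=(2k)^{1/p}\Bigl(\tfrac{K_1^p}{\log^p R}+\max\bigl\{\tfrac{K_1^p R^p}{\log^p R},\tfrac{K_2^p R^p}{(R-1)^p}\bigr\}\Bigr)^{1/p}.
\]
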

\begin{proof} For $n\in N$, put $\alpha_n(x)=\psi_n(\log_R d(0,x))$ if $x\in\MM\setminus\{0\}$ and $\alpha_n(0)=0$. Without loss of generality we assume that $\delta_n(x)=0$ if $x\notin\MM_{R^{I_n}}$. Define $f\colon \MM \to X$ by
\[
f(x)=(f_n(x))_{n\in N}:=(\alpha_n(x) \, \delta_n(x))_{n\in N}.
\]

Let $x$, $y\in\MM$ and assume without loss of generality that $d(0,y)\le d(0,x)$. We have
\[
\Vert f(y)-f(x)\Vert \le (2k)^{1/p} \sup_{n\in N} \Vert f_n(y)-f_n(x)\Vert.
\]
The $p$-subadditivity of the quasi-norm gives
\begin{align*}
\Vert f_n(y)&-f_n(x)\Vert^p
= \Vert (\alpha_n(y) -\alpha_n(x))\delta_n(y)+\alpha_n(x)(\delta_n(y)-\delta_n(x))\Vert^p \\
&\le |\alpha_n(y) -\alpha_n(x)|^p \, \Vert\delta_n(y)\Vert^p+|\alpha_n(x)|^p \Vert \delta_n(y)-\delta_n(x)\Vert^p\\
&\le |\alpha_n(y) -\alpha_n(x)|^p d^p(0,y)+|\alpha_n(x)|^p \Vert \delta_n(y)-\delta_n(x)\Vert^p.
\end{align*}
Taking into account that $\log u \le u-1$ for every $u\ge 1$ we obtain
\[
|\alpha_n(y) -\alpha_n(x)| \, d(0,y)\le K_1 d(0,y) \log_R \frac{ d(0,x)}{d(0,y)}\le K_1\frac{d(0,x)-d(0,y)}{\log R}.
\]
Since $|\alpha_n(x)|\le K_2$, if $x\notin\MM_{R^{I_n}}$, or $\{x,y\}\subseteq \MM_{R^{I_n}}$ we have
\[
|\alpha_n(x)| \, \Vert \delta_n(y)-\delta_n(x)\Vert \le |\alpha_n(x)| \,d(x,y) \le K_2 d(x,y).
\]
Assume that $x\in\MM_{R^{I_n}}$ and $y\notin\MM_{R^{I_n}}$. If $d(0,x)\leq Rd(0,y)$ we have
\begin{align*}
|\alpha_n(x)| \, \Vert \delta_n(y)-\delta_n(x)\Vert &= |\alpha_n(x)-\alpha_n(y)| \, d(0,x) \\
&\le R |\alpha_n(x)-\alpha_n(y)| d(0,y),
\end{align*}
and if $d(0,x)\geq Rd(0,y)$,
\begin{align*}
|\alpha_n(x)| \, \Vert \delta_n(y)-\delta_n(x)\Vert &= |\alpha_n(x)| \, \frac{ Rd(0,x) - d(0,x)}{R-1}\\
&\le |\alpha_n(x)| R \frac{d(0,x)-d(0,y)} {R-1}\\
&\le
\frac{K_ 2 R}{R-1} (d(0,x)-d(0,y)).
\end{align*}
Summing up, since $ d(0,x)-d(0,y)\le d(x,y)$, we get that $f$ is $C$-Lipschitz for
\begin{equation}\label{NormofT}
C=(2k)^{1/p}
\left(\frac{K_1^p}{\log^p R}
+\max\left\{ \frac{K_1^p R^p}{\log^p R}, \frac{K_2^p R^p}{(R-1)^p} \right\}
\right)^{1/p}.
\end{equation}
Thus, by \cite{AACD2018}*{Theorem 4.5}, there is a linear map $T\colon \F_p(\MM) \to X$ such that $T\circ\delta_\MM=f$ and $\Vert T\Vert \le C$.
\end{proof}

We use $[a,b]$ for a closed real interval, and if either $a$ or $b$ are infinity, we replace the corresponding bracket with a parenthesis.
\begin{Lemma}\label{lem:partitionunity}
Let $k\in\Nat$ and $r>0$. There is a constant $C=C(k,r)$ such that whenever $(a_n,b_n)_{n\in N}$ is a $k$-overlapping family of open intervals in $\Rea$ with
\[
\Rea = \bigcup_{n\in N} [a_n+r,b_n-r],
\]
then there is a family $(\psi_n)_{n\in N}$ of $C$-Lipschitz functions from $\Rea$ into $[0,1]$ such that
\[
\sum_{n\in N} \psi_n(u)=1, \quad u\in\Rea,
\]
and, for $n\in N$, $\psi_n(u)>0$ if and only if $u\in(a_n,b_n)$.
\end{Lemma}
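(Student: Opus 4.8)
The plan is to follow the classical recipe for a partition of unity: first attach to each $n\in N$ a single nonnegative Lipschitz \emph{bump} $\varphi_n$ whose positivity set is exactly $(a_n,b_n)$, and then normalize by setting $\psi_n=\varphi_n/\Phi$, where $\Phi=\sum_{n\in N}\varphi_n$. The obvious candidate is the distance to the complement of the interval; to keep everything finite and bounded even when some interval is unbounded (recall the parenthesis convention allows $a_n=-\infty$ or $b_n=+\infty$), I would truncate it at height $r$ and take
\[
\varphi_n(u)=\min\!\big\{\,r,\ \operatorname{dist}\big(u,\Rea\setminus(a_n,b_n)\big)\big\}.
\]
This $\varphi_n$ is $1$-Lipschitz, takes values in $[0,r]$, and satisfies $\varphi_n(u)>0$ if and only if $u\in(a_n,b_n)$. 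Granting that $\Phi$ is well defined and bounded away from $0$, the identities $\sum_n\psi_n\equiv1$ and $0\le\psi_n\le1$ (the latter from $0\le\varphi_n\le\Phi$), as well as the prescribed positivity set of $\psi_n$, are then immediate, so the whole content of the lemma reduces to a uniform Lipschitz bound for the $\psi_n$.

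The two structural hypotheses enter through a pair of estimates on $\Phi$. First, the $k$-overlapping assumption guarantees that at every point at most $k$ of the $\varphi_n$ are nonzero, so $\Phi$ is a genuine (pointwise finite) function; moreover, for fixed $u,v\in\Rea$ at most $2k$ indices $n$ have $\varphi_n(u)\ne0$ or $\varphi_n(v)\ne0$, and every other term contributes $\varphi_n(u)-\varphi_n(v)=0$, so the $1$-Lipschitz property of each $\varphi_n$ yields $|\Phi(u)-\Phi(v)|\le 2k\,|u-v|$. Second, the covering assumption gives the lower bound: for any $u\in\Rea$ there is $m\in N$ with $u\in[a_m+r,b_m-r]$, whence $\operatorname{dist}(u,\Rea\setminus(a_m,b_m))\ge r$ and therefore $\Phi(u)\ge\varphi_m(u)=r$. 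Thus $\Phi\ge r$ throughout.

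Finally I would estimate the quotient directly. Writing
\[
\psi_n(u)-\psi_n(v)=\frac{\varphi_n(u)\big(\Phi(v)-\Phi(u)\big)+\Phi(u)\big(\varphi_n(u)-\varphi_n(v)\big)}{\Phi(u)\,\Phi(v)},
\]
and using $0\le\varphi_n\le\Phi$ together with $\Phi\ge r$ and the two bounds above, the first summand in the numerator is controlled by $\tfrac{2k}{r}|u-v|$ and the second by $\tfrac1r|u-v|$, yielding $|\psi_n(u)-\psi_n(v)|\le\tfrac{2k+1}{r}\,|u-v|$; thus $C=(2k+1)/r$ works. I expect this last quotient estimate to be the only real obstacle: it is precisely the place where the two hypotheses must be played off against each other, the overlap bound taming the variation of the denominator while the covering margin $r$ keeps the denominator uniformly away from $0$. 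Everything else is routine verification.
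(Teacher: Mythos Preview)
Your proof is correct and follows essentially the same approach as the paper: build truncated tent bumps $\varphi_n$ supported on $(a_n,b_n)$, set $\Phi=\sum_n\varphi_n$, use the $k$-overlapping hypothesis to make $\Phi$ Lipschitz and the $r$-margin covering to bound $\Phi\ge r$ below, then normalize and estimate the quotient via the same algebraic identity. Your explicit formula $\varphi_n(u)=\min\{r,\operatorname{dist}(u,\Rea\setminus(a_n,b_n))\}$ is exactly the paper's ``piecewise linear function'' (and, as you note, handles unbounded intervals cleanly); your estimate even yields the slightly sharper constant $(2k+1)/r$ in place of the paper's $3k/r$, because you exploit $\varphi_n\le\Phi$ rather than $\varphi_n\le r$ when bounding the first summand.
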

\begin{proof}For each $n\in N$ there is a $1$-Lipschitz piecewise linear function $\phi_n\colon\Rea\to[0,r]$ such that $\phi_n(u)=r$ if $u\in[a_n+r,b_n-r]$, and $\phi_n(u)=0$ if and only if $u\notin(a_n,b_n)$. The map
\[
\Phi(u)=\sum_{n\in N} \phi_n(u), \quad u\in\Rea
\]
is $2k$-Lipschitz, and $\Phi(u)\in[r,kr]$ for every $u\in\Rea$. Set
\[
\psi_n=\frac{\phi_n}{\Phi}, \quad n\in N.
\]
We have $\psi_n(u)\in[0,1]$ for all $n\in N$ and all $u\in\Rea$. Let $n\in N$ and $u$, $v\in\Rea$. We have
\begin{align*}
|\psi_n(u)-\psi_n(v)|&=\frac{|\phi_n(u)\Phi(v) -\phi_n(u)\Phi(x)+\phi_n(u)\Phi(u) -\phi_n(v)\Phi(u) |}{\Phi(u)\Phi(v)}\\
&\le\frac{\phi_n(u)|\Phi(v) -\Phi(u)|+|\phi_n(u) -\phi_n(v)|\Phi(u)}{\Phi(u)\Phi(v)}\\
&\le \frac{2kr+kr}{r^2}|u-v|\\
&=\frac{3k}{r}|u-v|.\qedhere
\end{align*}
\end{proof}

\begin{Theorem}\label{thm:complemented} Suppose $(\MM,d,0)$ is a pointed metric space. For some constants $k\in\Nat$ and $r>0$ let
$(I_n)_{n\in N}$ be a $k$-overlapping countable family of sets in $\Rea$ for which there is $([a_n,b_n])_{n\in N}$with $(-r+a_n,r+b_n)\subseteq I_n$ for all $n\in N$. Then if $R>1$ we have
\[
\F_p(\MM) \unlhd_C \bigoplus_{n\in N} \F_p\left(\MM^*_{R^{I_n }}\right)
\]
for all $0<p\le 1$, where $C=C(p,k,R,r)$ (in particular, the value of the constant $C$ does not depend on the metric space $\MM$).
\end{Theorem}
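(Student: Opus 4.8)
The plan is to realise the complementation by hand, taking the canonical summation operator $P$ of \eqref{eq:5} as the left inverse and a Kalton-type operator as the section. Set $T:=P$ for the family $(\MM^*_{R^{I_n}})_{n\in N}$, so that $\|T\|=1$, and construct a bounded $S\colon\F_p(\MM)\to\bigoplus_{n\in N}\F_p(\MM^*_{R^{I_n}})$ with $P\circ S=\Id_{\F_p(\MM)}$. Once this is done, $\F_p(\MM)$ is $C$-complemented in the sum via $S$ with $C=\|S\|\cdot\|P\|=\|S\|$, and it only remains to check that $\|S\|$ can be bounded by a constant depending on $p,k,R,r$ alone.

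To produce $S$, I would first feed the given intervals into the partition-of-unity machinery. Apply Lemma~\ref{lem:partitionunity} to the open intervals $(a_n-r,b_n+r)_{n\in N}$: each is contained in $I_n$, so the family is $k$-overlapping since $(I_n)_{n\in N}$ is, and together with the cores $[a_n,b_n]$ (which cover $\Rea$) they satisfy the covering hypothesis with parameter $r$. This yields $C(k,r)$-Lipschitz functions $\psi_n\colon\Rea\to[0,1]$ with $\sum_{n\in N}\psi_n\equiv1$ and $\psi_n(u)>0$ exactly when $u\in(a_n-r,b_n+r)\subseteq I_n$. In particular the supports $J_n:=\psi_n^{-1}(\Rea\setminus\{0\})=(a_n-r,b_n+r)$ obey $J_n\subseteq I_n$, so $(J_n)_{n\in N}$ is again $k$-overlapping.

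Next I would run $(\psi_n)_{n\in N}$ through Lemma~\ref{lem:KaltonAnologyGeneralized} with $K_1=C(k,r)$ and $K_2=1$, obtaining a bounded operator $S_0\colon\F_p(\MM)\to\big(\bigoplus_{n\in N}\F_p(\MM^*_{R^{J_n}})\big)_p$ with $S_0(\delta_\MM(x))=(\psi_n(\log_R d(0,x))\,\delta_n(x))_{n}$ and, by \eqref{NormofT}, $\|S_0\|\le C(p,k,R,K_1,K_2)$. Composing coordinatewise with the canonical norm-one embeddings $\F_p(\MM^*_{R^{J_n}})\to\F_p(\MM^*_{R^{I_n}})$ (the linearizations of the isometric inclusions $\MM^*_{R^{J_n}}\subseteq\MM^*_{R^{I_n}}$) produces $S$ with the same norm bound; since $K_1,K_2$ involve only $k,r$, this gives $\|S\|\le C(p,k,R,r)$, independent of $\MM$. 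Finally I would verify $P\circ S=\Id$ on generators: for $x\in\MM\setminus\{0\}$,
\[
P\big(S(\delta_\MM(x))\big)=\sum_{n\in N}\psi_n(\log_R d(0,x))\,L_n(\delta_n(x)).
\]
Whenever a summand is nonzero one has $\log_R d(0,x)\in J_n\subseteq I_n$, i.e.\ $x\in\MM_{R^{I_n}}$, so $\delta_n(x)=\delta_{\MM^*_{R^{I_n}}}(x)$ and $L_n(\delta_n(x))=\delta_\MM(x)$ on exactly the contributing terms; the sum then collapses to $\big(\sum_{n\in N}\psi_n(\log_R d(0,x))\big)\delta_\MM(x)=\delta_\MM(x)$. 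By linearity and density of $\spn\{\delta_\MM(x):x\in\MM\}$ this yields $P\circ S=\Id_{\F_p(\MM)}$.

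I expect the main difficulty to be the bookkeeping that forces $P\circ S=\Id$ exactly: one must keep the supports $J_n$ strictly inside the $I_n$, so that each embedding $L_n$ returns $\delta_\MM(x)$ undistorted, while simultaneously preserving the $k$-overlap and the exact identity $\sum_n\psi_n\equiv1$ needed for the collapse. Reconciling the two interval conventions---the inner cores $[a_n,b_n]$ demanded by the covering in Lemma~\ref{lem:partitionunity} against the fattened supports $(a_n-r,b_n+r)$ that must still fit inside $I_n$---is precisely what the buffer $r$ is there to absorb. A minor point specific to $p<1$ is that the coordinate maps are only norm-$\le1$ linearizations rather than isometric embeddings, but this is harmless for the norm estimate.
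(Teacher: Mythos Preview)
Your proof is correct and follows essentially the same route as the paper: apply Lemma~\ref{lem:partitionunity} to the intervals $(a_n-r,b_n+r)\subseteq I_n$ to get a partition of unity $(\psi_n)_{n\in N}$, feed these into Lemma~\ref{lem:KaltonAnologyGeneralized} to produce the Kalton-type section, compose with the canonical coordinate embeddings into the larger sum, and verify $P\circ S=\Id$ on Dirac masses using $\sum_n\psi_n\equiv1$. The only differences are notational (the paper names the Kalton operator $T$ and the embedding $S$, with left inverse $P$, whereas you pack the composite into a single $S$), and your commentary on the $p<1$ subtlety of the coordinate maps is a welcome clarification.
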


\begin{proof}
Let $(\psi_n)_{n\in N}$ be the family of Lipschitz functions whose existence is guaranteed by Lemma~\ref{lem:partitionunity} with respect to the intervals $(J_n)_{n\in N}$, where $J_n=(-r+a_n,r+b_n)$. Consider the operator
\[
T\colon \F_p(\MM)\to Y:=\left( \bigoplus_{n\in N}\F_p(\MM^*_{R^{J_n}})\right)_p
\]
provided by Lemma~\ref{lem:KaltonAnologyGeneralized}. Let $S$ be the canonical embedding of $Y$ into $\bigoplus_{n\in N} \F_p\left(\MM^*_{R^{I_n }}\right)$. Let $P$ be the operator defined as in \eqref{eq:5}. For $x\in\MM$ we have
\[
P(S(T(\delta(x))))
=\left(\sum_{n\in N}\psi_n(\log_R d(0,x))\right)\delta(x)
=\delta(x).
\]
By linearity and continuity, $P\circ S \circ T =\Id_{\F_p(\MM)}$.
\end{proof}

\begin{Theorem}\label{thm:reversecomplementedGerenalization} Let $(\MM,d,0)$ be a pointed metric space. Suppose that \begin{enumerate}[label={$\bullet$}]
\item $(I_n)_{n\in N}$ is a countable sequence of subsets of $\Rea$ for which there is a pairwise disjoint family $(J_n)_{n\in N}$ of open intervals such that if $J_n=(a_n,b_n)$, then $I_n\subseteq [a_n+r,b_n-r]$ for some $r>0$;
\item There is $R>1$ such that $\MM^*_{R^{I_n}}$ is complementably $p$-amenable in $\MM^*_{R^{J_n}}$ with constant $K>1$ for all $n\in N$.
\end{enumerate} Then via the canonical operator defined in \eqref{eq:5}we have
\[
\left( \bigoplus_{n\in N}\F_p(\MM^*_{R^{I_n}})\right)_p \unlhd_C \F_p(\MM)
\]
for all $0<p\le 1$, where $C=C(p,r,R,K)$ (in particular, the value of the constant $C$ does not depend on the metric space $\MM$).
\end{Theorem}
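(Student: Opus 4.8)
The plan is to produce a bounded left inverse $Q$ of the norm-one operator $P$ from \eqref{eq:5} attached to the family $(\MM^*_{R^{I_n}})_{n\in N}$, with $\|Q\|\le C$ for some $C=C(p,r,R,K)$. Since $\|P\|=1$ and we will have $Q\circ P=\Id$, this gives at once that $\left(\bigoplus_{n\in N}\F_p(\MM^*_{R^{I_n}})\right)_p$ is $C$-complemented in $\F_p(\MM)$ via $P$, which is exactly the assertion.

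To build $Q$ I would combine a scalar cut-off with the retractions coming from the amenability hypothesis. Because the intervals $J_n=(a_n,b_n)$ are pairwise disjoint and $I_n\subseteq[a_n+r,b_n-r]$, for each $n$ I take a trapezoidal $\tfrac1r$-Lipschitz function $\psi_n\colon\Rea\to[0,1]$ equal to $1$ on $[a_n+r,b_n-r]$ and vanishing off $(a_n,b_n)$; no partition-of-unity normalization is needed here precisely because disjointness of the $J_n$ makes $(\psi_n^{-1}(\Rea\setminus\{0\}))_n$ a $1$-overlapping family. Next, since $\MM^*_{R^{I_n}}$ is complementably $p$-amenable in $\MM^*_{R^{J_n}}$ with constant $K$, Lemma~\ref{lem:complementedCondition} supplies a $K$-Lipschitz map $g_n\colon\MM^*_{R^{J_n}}\to\F_p(\MM^*_{R^{I_n}})$ with $g_n(x)=\delta_{\MM^*_{R^{I_n}}}(x)$ for $x\in\MM^*_{R^{I_n}}$, so in particular $g_n(0)=0$ and $\|g_n(y)\|\le K\,d(0,y)$. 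Setting $\alpha_n(x)=\psi_n(\log_R d(0,x))$ for $x\ne 0$ and $\alpha_n(0)=0$, I define $q_n\colon\MM\to\F_p(\MM^*_{R^{I_n}})$ by $q_n(x)=\alpha_n(x)\,g_n(x)$ when $x\in\MM^*_{R^{J_n}}$ and $q_n(x)=0$ otherwise; this is unambiguous across $\partial\MM_{R^{J_n}}$ because $\alpha_n$ vanishes there. Put $q=(q_n)_n\colon\MM\to\left(\bigoplus_n\F_p(\MM^*_{R^{I_n}})\right)_p$.

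The heart of the matter, and the step I expect to be the main obstacle, is showing that $q$ is $C$-Lipschitz. This follows the same pattern as the estimate in the proof of Lemma~\ref{lem:KaltonAnologyGeneralized}, with $g_n$ in the role of $\delta_n$. By $p$-subadditivity one writes $q_n(x)-q_n(y)=\alpha_n(x)(g_n(x)-g_n(y))+(\alpha_n(x)-\alpha_n(y))g_n(y)$, bounds the first summand by $K\,d(x,y)$, and controls the second through the inequality $\log u\le u-1$ to produce a bound of the form $\tfrac{K}{r\log R}\,d(x,y)$. The delicate situation, in which $x$ lies in the support of $\alpha_n$ while $q_n(y)=0$ and the scales $d(0,x)$ and $d(0,y)$ differ greatly, is dispatched exactly as in that lemma, by splitting according to whether $d(0,x)\le R\,d(0,y)$ or $d(0,x)\ge R\,d(0,y)$; the only change is an extra factor $K$ inherited from the bounds on $g_n$. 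Crucially, disjointness of the $J_n$ ensures that for fixed $x,y$ only the (at most two) indices of the annuli containing $x$ and $y$ contribute to $\sum_n\|q_n(x)-q_n(y)\|^p$, so the $\ell_p$-sum quasi-norm is controlled by $2^{1/p}$ times the per-coordinate constant.

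Finally, I would linearize and verify the intertwining identity. By the universal property of $\F_p(\MM)$ (\cite{AACD2018}*{Theorem 4.5}), the $C$-Lipschitz map $q$ induces a linear operator $Q$ with $Q\circ\delta_\MM=q$ and $\|Q\|\le C$. To check $Q\circ P=\Id$ it suffices to test on the dense set of generators $\delta_{\MM^*_{R^{I_n}}}(x)$ with $x\in\MM^*_{R^{I_n}}$: applying $P$ sends such a generator to $\delta_\MM(x)$, and $Q(\delta_\MM(x))=q(x)$. For $x\in\MM_{R^{I_n}}$ we have $\log_R d(0,x)\in I_n$, so $\alpha_n(x)=1$ and $q_n(x)=g_n(x)=\delta_{\MM^*_{R^{I_n}}}(x)$, while $q_m(x)=0$ for every $m\ne n$ because $J_m\cap J_n=\emptyset$ forces $x\notin\MM_{R^{J_m}}$. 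Thus $q(x)$ equals the original generator, and $Q\circ P=\Id$ follows by linearity and continuity, completing the argument with $C=C(p,r,R,K)$ independent of $\MM$.
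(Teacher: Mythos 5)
Your proof is correct and is essentially the paper's argument: the paper builds the left inverse of $P$ as $E\circ T$, where $T$ comes from Lemma~\ref{lem:KaltonAnologyGeneralized} applied to the same disjointly supported cut-offs $\psi_n$ and $E=(E_n)_n$ is the diagonal operator of retractions supplied by complementable $p$-amenability, and your map $q$ is exactly $E\circ T\circ\delta_\MM$. The only cosmetic difference is that you fuse the two factors and rerun the Lipschitz estimate with $g_n$ in place of $\delta_n$ (picking up the factor $K$), rather than quoting the lemma and post-composing with $E$.
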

\begin{proof}
For $n\in N$ pick a $1$-Lipschitz function $\psi_n\colon\Rea\to [0,r]$ such that
$\psi(u)=1$ if $u\in[a_n+r,b_n-r]$ and $\psi_n(u)=0$ if $u\notin J_n$.
Let
\[
T\colon \F_p(\MM)\to \left( \bigoplus_{n\in Z}\F_p(\MM^*_{R^{J_n}})\right)_p
\]
be the linear map defined as in \eqref{eq:7}. By Lemma~\ref{lem:KaltonAnologyGeneralized}, $T$ is bounded. Let $P$ be the operator defined as in \eqref{eq:5} corresponding to the family $(\MM^*_{R^{I_n}})_{n\in N}$. By the assumption, there are linear maps $(E_n)_{n\in N}$ from $\F_p(\MM^*_{R^{J_n}})$ into $\F_p(\MM^*_{R^{I_n}})$ with $\|E_n\| \leq K$ and $E_n\circ L_{\jmath}=\Id$. Then the linear map
\[
E\colon \left( \bigoplus_{n\in N}\F_p(\MM^*_{R^{J_n}})\right)_p \to X:=\left( \bigoplus_{n\in N}\F_p(\MM^*_{R^{I_n}})\right)_p
\]
defined by $E=(E_n)_{n\in Z}$ is bounded by $K$. Denote by $\delta_n$ the $\delta$-function in $\MM^*_{R^{I_n}}$. Denote by $\II_k(h)$ the sequence $(h_n)_{n\in N}$ defined by $h_k=h$ and $h_n=0$ is $n\not=k$. If $x\in\MM_{R^{I_n}}$, taking into account that $\psi_n(u)=1$ if $u\in I_n$ and that $\psi_n(u)=0$ if $u\notin J_n$ we obtain
\[
\II_n(\delta_n(x))
\stackrel{P}\mapsto \delta_\MM(x)
\stackrel{T}\mapsto
\II_{n}(L_{\jmath}(\delta_{n}(x)))
\stackrel{E}\mapsto \II_{n}(\delta_{n}(x)).
\]
Therefore $E\circ T \circ P=\Id_{X}$.
\end{proof}

Since we are dealing with complementability relations between quasi-Banach spaces, Pe{\l}czy\'nski's decomposition method (see, e.g., \cite{AlbiacKalton2016}*{Theorem 2.2.3}) will be a key ingredient in our arguments. In particular, the following lemma will be frequently used.
\begin{Lemma}\label{lem:InfiniteSums} Let $(J,\le)$ be a partially ordered set, let $1\le K<\infty$ be a constant and let $(X_j)_{j\in J}$ be a family of Banach spaces such that $X_i$ is $K$-complemented in $X_j$ whenever $i\le j$. For $i=1$, $2$, let $N_i$ be a countable set, and let $\phi_i\colon N_i\to J$ be such that for every $j\in J$ and $F\subset N_i$ finite there is $n\in N_i\setminus F$ with $j\le \phi_i(n)$. For each $p\in[0,\infty]$ set
\[
X_i=\left(\bigoplus_{n\in N_i} X_{\phi_i(n)} \right)_p, \quad i=1,2.
\]
(with the convention that $\ell_p(X)$ means $c_0(X)$ if $p=0$). Then there is a constant $C$ depending only on $K$ and $p$ such that $X_{1}\simeq_C X_{2}\simeq_C \ell_p(X_{1})$.
\end{Lemma}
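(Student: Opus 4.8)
The plan is to recognise that all three spaces in the conclusion have one common shape and then to run Pe\l czy\'nski's decomposition method. Write $Y_\phi:=\left(\bigoplus_{n\in N}X_{\phi(n)}\right)_p$ for a countable set $N$ and a map $\phi\colon N\to J$, so that $X_1=Y_{\phi_1}$, $X_2=Y_{\phi_2}$, and $\ell_p(X_1)=Y_{\Phi}$ where $\Phi\colon\Nat\times N_1\to J$ is given by $\Phi(k,n)=\phi_1(n)$. I will call a map $\phi\colon N\to J$ \emph{cofinal} if for every $j\in J$ and every finite $F\subset N$ there is $n\in N\setminus F$ with $j\le\phi(n)$. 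By hypothesis $\phi_1$ and $\phi_2$ are cofinal, and $\Phi$ is cofinal as well: given $j$ and finite $F\subset\Nat\times N_1$, I would first use cofinality of $\phi_1$ to pick $n$ with $\phi_1(n)\ge j$, and then (since $\Nat$ is infinite) pick $k$ with $(k,n)\notin F$. Thus it suffices to prove that any two spaces $Y_\phi$, $Y_\psi$ with $\phi,\psi$ cofinal are $C$-isomorphic for some $C=C(K,p)$.

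The key step I would establish is a one-sided complementation claim: if $\psi\colon M\to J$ is cofinal, then $Y_\phi\unlhd_K Y_\psi$ for \emph{every} $\phi\colon N\to J$. Enumerating $N=\{n_1,n_2,\dots\}$, cofinality of $\psi$ lets me build greedily an injection $\iota\colon N\to M$ with $\phi(n)\le\psi(\iota(n))$: having chosen $\iota(n_1),\dots,\iota(n_{j-1})$, I apply the cofinality property with the finite set $F=\{\iota(n_1),\dots,\iota(n_{j-1})\}$ and the element $\phi(n_j)$ of $J$ to select $\iota(n_j)\notin F$. For each $n$, the relation $\phi(n)\le\psi(\iota(n))$ supplies (after normalising as in the Remark after Definition~\ref{def:isomorphism}) maps $S_n\colon X_{\phi(n)}\to X_{\psi(\iota(n))}$ and $T_n$ in the reverse direction with $T_n\circ S_n=\Id$, $\|S_n\|\le 1$, and $\|T_n\|\le K$. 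Assembling these coordinatewise along $\iota$ -- placing the $n$-th summand of $Y_\phi$ into coordinate $\iota(n)$ and putting $0$ in the unused coordinates -- yields $\widehat S\colon Y_\phi\to Y_\psi$ and $\widehat T\colon Y_\psi\to Y_\phi$ with $\widehat T\circ\widehat S=\Id_{Y_\phi}$. Since $\iota$ is injective, the coordinates involved are pairwise disjoint, and so $\|\widehat S\|\le 1$ and $\|\widehat T\|\le K$ read off directly from the $\ell_p$-quasinorm (including the cases $p=0$ and $p=\infty$). Hence $Y_\phi\unlhd_K Y_\psi$, and applying this in both directions shows that $X_1$, $X_2$, $\ell_p(X_1)$ are pairwise $K$-complemented in one another.

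To finish I would exploit self-similarity. For any cofinal sum, $A:=\ell_p(Y_\phi)$ satisfies $\ell_p(A)\simeq_1 A$ isometrically, by reindexing along a bijection $\Nat\times\Nat\to\Nat$. Applying Pe\l czy\'nski's decomposition method \cite{AlbiacKalton2016}*{Theorem 2.2.3} to the pair $A=\ell_p(Y_\phi)$ and $B=Y_\phi$ -- which satisfy $A\unlhd_K B$, $B\unlhd_K A$ by the previous step together with $A\simeq_1\ell_p(A)$ -- gives $Y_\phi\simeq\ell_p(Y_\phi)$; in other words every cofinal sum absorbs its own $\ell_p$-sum. Then for two cofinal sums $Y_\phi$, $Y_\psi$ I would invoke the decomposition method a second time, using $Y_\phi\unlhd_K Y_\psi$, $Y_\psi\unlhd_K Y_\phi$, and $Y_\phi\simeq\ell_p(Y_\phi)$, to conclude $Y_\phi\simeq Y_\psi$. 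Specialising to $X_1$, $X_2$, $\ell_p(X_1)$ yields $X_1\simeq X_2\simeq\ell_p(X_1)$.

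The main obstacle I expect is purely quantitative: every isomorphism produced must have norm and inverse-norm bounded solely in terms of $K$ and $p$, with no dependence on $J$ or on the individual $X_j$. This should hold because the complementation constant is exactly $K$, the self-similarity is isometric, and the decomposition method combines only a bounded number of such data; tracking the finitely many products of these constants -- all of whose $p$-additivity factors are controlled by $\kappa_2\le 2^{1/p-1}$ for the $\ell_p$-sums involved -- gives the desired $C=C(K,p)$. A secondary point requiring care is that for $0<p<1$ the ambient spaces are only $p$-Banach, so one must use the (purely formal, hence still valid) quasi-Banach form of Pe\l czy\'nski's decomposition method rather than its literal Banach-space statement.
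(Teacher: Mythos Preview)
Your proof is correct and follows essentially the same strategy as the paper: establish mutual $K$-complementation among cofinal sums via greedy injections, then invoke Pe\l czy\'nski's decomposition method. The only organizational difference is that the paper singles out one auxiliary map $\psi$ with each value attained infinitely often (so that $X_\psi\simeq_1\ell_p(X_\psi)$ directly) and compares every other $X_\phi$ to it with a single application of the decomposition method, whereas you work with $\ell_p(Y_\phi)$ as the self-similar object and apply the method twice; your packaging into a single ``one-sided complementation'' lemma is arguably cleaner and sidesteps any implicit countability of $J$.
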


\begin{proof} Let $\OO$ be the set of all sequences $\phi\colon \Nat\to J$ such that for every
$j\in J$ and every $k\in\Nat$ there is $n\in N$ with $k<n$ and $j\le \phi(n)$. For $\phi\in\OO$ put
\[
X_{\phi}=\left(\bigoplus_{n=1}^\infty X_{\phi(n)} \right)_p.
\]
We must prove that $X_\phi\simeq_C X_\psi\simeq \ell_p(X_\phi)$ for all $\phi$, $\psi\in\OO$ and some $C=C(p,K)$.
For $j\in J$ and $\phi\in\OO$ put
\[
N(\phi,j)=|\{n\in \Nat \colon \phi(n)=j\}|.
\]
The symmetry of the norm of $\ell_p$ yields that:
\begin{enumerate}[label={(\roman*)},widest=iii]
\item\label{DM1} $X_{\phi}\simeq X_{\psi}$ isometrically if $N(\phi,j)=N(\psi,j)$ for all $j\in J$; and
\item\label{DM2} $X_{\phi}$ is $1$-complemented in $X_{\psi}$ if $N(\phi,j)\le N(\psi,j)$ for all $j\in J$.
\end{enumerate}
Using \ref{DM1} and the fact that an iteration of an $\ell_p$-norm is an $\ell_p$-norm we have:
\begin{enumerate}[resume*]
\item\label{DM3} $\ell_p(X_{\psi})\simeq X_{\psi}$ isometrically if $N(\psi,j)=\infty$ for all $j\in J$.
\end{enumerate}
From our assumption we readily infer:
\begin{enumerate}[resume*]
\item\label{DM4} $X_{\phi}$ is $K$-complemented in $X_{\psi}$ if $\phi(n)\le\psi(n)$ for all $n\in \Nat$.
\end{enumerate}
Pick $\psi\in\OO$ such that $|N(\psi,j)|=\infty$ for all $j\in J$. Let $\phi\in\OO$. By \ref{DM3}, $\ell_p(X_{\psi})\simeq X_{\psi}$ isometrically. By \ref{DM2}, $X_{\phi}$ is $1$-complemented in $X_{\psi}$. The definition of $\OO$ yields the existence of an increasing sequence $(k_n)_{n=1}^\infty$ and a map $\alpha\colon N\to N$ such that $\psi(n) \le \rho(n)=\phi(k_n)$ for all $n\in\Nat$. By \ref{DM2} and \ref{DM4}, $X_{\psi}$ is $K$-complemented in $X_{\rho}$, which in turn is $1$-complemented in $X_{\phi}$. By Pe{\l}czy\'nski's decomposition method, $X_{\phi}\simeq_C X_{\psi}$ for some constant $C$ depending on $K$ and $p$.
\end{proof}

Next we see a few instances of situations where we may combine the results from this subsection.
\begin{Theorem}\label{thm:sumisomorphimGeneralized}Suppose $\MM$ is an infinite metric space.
Let $0<p\le 1$, $R>1$, $\lambda$, $\mu$, $K_1$, $K_2$, $K_3>1$ and $\alpha>0$. There is a constant $C$ such that, if $A_n=(c_1R^n, c_2 R^n]$ for $n\in\Nat_*$, then $\F_p(\MM)\simeq_C \ell_p (\F_p(\MM))$ and
\[
\F_p(\MM)
\simeq_C \left(\bigoplus_{n\in N} \F_p\left(\MM^*_{A_{\phi(n)}} \right)\right)_p
\simeq_C \left(\bigoplus_{n\in N} \F_p\left(\MM_{A_{\phi(n)}} \right)\right)_p,
\]
whenever $N$ is a countable set, $\phi\colon N\to\Nat_*$ is unbounded, $0<c_1<c_3<c_2<\infty$ satisfies $\log_R (c_2/c_1)>\lambda$ and $\log_R (c_3/c_1)>\alpha$, and $(\MM,d,0)$ is an infinite pointed metric space such that
\begin{enumerate}[label={$\bullet$}]

\item $d(0,x)> c_1$ for all $x\in\MM\setminus\{0\}$;

\item $\F_p(\MM_{A_n})$ is $K_1$-complemented in $\F_p(\MM_{A_m})$ for all $n,m\in \Int$ with $0\le n\leq m$;

\item $\F_p(\MM_{(c_1, c_3]})$ is $K_2$-complemented in $\F_p(\MM_{A_0})$; and

\item If $B_n=( c_1\mu^{-1}R^n, c_2\mu R^n]$, then, for $n$ large enough,
$\MM_{A_{n}}$ is complementably $p$-amenable in $\MM_{B_n}$ with constant $K_3$.
\end{enumerate}
\end{Theorem}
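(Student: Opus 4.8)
The plan is to sandwich $\F_p(\MM)$ between complemented copies of the full sum $W:=\bigl(\bigoplus_{n\in\Nat_*}\F_p(\MM^*_{A_n})\bigr)_p$ and then finish with Pe\l czy\'nski's decomposition method. Working in the logarithmic scale $u=\log_R d(0,x)$, the annulus $A_n$ corresponds to the interval $I_n=(\log_R c_1+n,\log_R c_2+n]$ of length $\log_R(c_2/c_1)>\lambda$; since $d(0,x)>c_1$, the family $(I_n)_{n\in\Nat_*}$ covers the whole range $(\log_R c_1,\infty)$ of $u$ and is $k$-overlapping for some $k$ depending only on $\lambda$. All constants produced below will depend only on the global parameters $p,R,\lambda,\mu,\alpha,K_1,K_2,K_3$, never on $c_1,c_2,c_3$ or on $\MM$, which is what the uniform statement requires.

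For the inclusion $\F_p(\MM)\unlhd_C W$ I would run the Kalton-type operator of Theorem~\ref{thm:complemented} (equivalently Lemma~\ref{lem:KaltonAnologyGeneralized} followed by the reconstruction operator~\eqref{eq:5}) against a partition of unity subordinate to $(I_n)_{n\in\Nat_*}$ and summing to $1$ on $(\log_R c_1,\infty)$. The one delicate point is the bottom of the scale: the intervals with $n\ge 0$ do not cover $(-\infty,\log_R c_1]$, so the lowest bump must absorb the slab $(c_1,c_3]$. The hypothesis $\log_R(c_3/c_1)>\alpha$ gives this bump a flat region of definite width, keeping its Lipschitz constant uniformly bounded, and the $K_2$-complementability of $\F_p(\MM_{(c_1,c_3]})$ in $\F_p(\MM_{A_0})$ lets the bottom slab be reincorporated into the $A_0$-summand of $W$ with a controlled constant.

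The reverse inclusion is where the real difficulty lies. Theorem~\ref{thm:reversecomplementedGerenalization} demands that the enlarged windows $B_n$ be \emph{pairwise disjoint}, whereas in the logarithmic scale $B_n$ has length $\log_R(c_2/c_1)+2\log_R\mu$ and the $B_n$ overlap heavily. My remedy is to split $\Nat_*$ into residue classes modulo an integer $j>\log_R(c_2/c_1)+2\log_R\mu$, so that inside each class the windows $B_n$ become pairwise disjoint while still containing the corresponding $A_n$ with a fixed margin $r\le\log_R\mu$. Applying Theorem~\ref{thm:reversecomplementedGerenalization} to one cofinal class $N_1=\{n\ge n_0:\,n\equiv i\ (\mathrm{mod}\ j)\}$, and feeding it the $K_3$-complementable $p$-amenability of $\MM_{A_n}$ in $\MM_{B_n}$ (transferred to the pointed sets $\MM^*_{A_n}\subseteq\MM^*_{B_n}$ via Lemma~\ref{lem:separatedpoint}\ref{it:addPoint}, legitimate because $0$ is an isolated point of both), yields that the cofinal sub-sum $\bigl(\bigoplus_{n\in N_1}\F_p(\MM^*_{A_n})\bigr)_p$ is $C$-complemented in $\F_p(\MM)$.

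Finally I would reassemble and symmetrize. The monotone hypothesis that $\F_p(\MM_{A_n})$ is $K_1$-complemented in $\F_p(\MM_{A_m})$ for $n\le m$ lets me invoke Lemma~\ref{lem:InfiniteSums} with $J=\Nat_*$ and $X_j=\F_p(\MM^*_{A_j})$ (the monotonicity passing to the starred spaces up to the harmless $\Rea$-summand of Lemma~\ref{lem:separatedpoint}): every sum $\bigl(\bigoplus_{n\in N}\F_p(\MM^*_{A_{\phi(n)}})\bigr)_p$ with $\phi$ unbounded is $C$-isomorphic to $W$ and to its own $\ell_p$-sum. In particular the cofinal sub-sum of the previous paragraph is isomorphic to $W$, so $W\unlhd_C\F_p(\MM)$; combined with $\F_p(\MM)\unlhd_C W$ and $W\simeq_C\ell_p(W)$, Pe\l czy\'nski's decomposition method forces $\F_p(\MM)\simeq_C W$. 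Chaining these isomorphisms gives $\F_p(\MM)\simeq_C\bigl(\bigoplus_{n\in N}\F_p(\MM^*_{A_{\phi(n)}})\bigr)_p\simeq_C\ell_p(\F_p(\MM))$, and a last use of Lemma~\ref{lem:separatedpoint} to delete the isolated basepoints turns $\MM^*_{A_{\phi(n)}}$ into $\MM_{A_{\phi(n)}}$, giving the third displayed isomorphism. I expect the disjointification-and-reassembly of the reverse inclusion to be the crux, with the uniform treatment of the bottom slab a close second.
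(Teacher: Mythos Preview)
Your proposal is correct and follows essentially the same route as the paper's proof: the paper applies Theorem~\ref{thm:complemented} to the family $\{(-\infty,\log_R c_3]\}\cup\{(\log_R c_1+n,\log_R c_2+n]:n\in\Nat_*\}$ (your ``bottom-slab'' trick, using $K_2$ to fold the extra summand into $A_0$), then applies Theorem~\ref{thm:reversecomplementedGerenalization} to the spaced-out subfamily $\{(\log_R c_1+sn,\log_R c_2+sn]:n\ge n_0\}$ for $s$ large (your residue-class disjointification), and closes with Lemma~\ref{lem:InfiniteSums}, Pe\l czy\'nski's decomposition, and Lemma~\ref{lem:separatedpoint}. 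The only detail you leave implicit is that passing from $\MM^*_{A_{\phi(n)}}$ to $\MM_{A_{\phi(n)}}$ via Lemma~\ref{lem:separatedpoint}\ref{it:omitPoint1dim} produces an extra $\ell_p$-summand in the full sum, which the paper absorbs by quoting \cite{AACD2019}*{Theorem 3.2}; this is routine once noted.
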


\begin{Theorem}\label{thm:sumisomorphimGeneralized:1}
Suppose $\MM$ is an infinite metric space. Let $0<p\le 1$, $R>1$, $\lambda$, $\mu$, $K_1$, $K_2$, $K_3>1$ and $\alpha>0$. There is a constant $C$ such that $\F_p(\MM)\simeq_C \ell_p (\F_p(\MM))$ and, if $A_n=(c_1R^{-n}, c_2 R^{-n}]$ for $n\in\Nat_*$, then
\[
\F_p(\MM)
\simeq_C \left(\bigoplus_{n\in N} \F_p\left(\MM^*_{A_{\phi(n)}} \right)\right)_p
\simeq_C \left(\bigoplus_{n\in N} \F_p\left(\MM_{A_{\phi(n)}} \right)\right)_p,
\]
whenever $N$ is a countable set, $\phi\colon N\to\Nat_*$ is unbounded, $0<c_1<c_3<c_2<\infty$ satisfy $\log_R (c_2/c_1)>\lambda$, and $\log_R (c_3/c_1)>\alpha$, and $(\MM,d,0)$ is an infinite pointed metric space such that
\begin{enumerate}[label={$\bullet$}]

\item $d(0,x)\le c_2$ for all $x\in\MM\setminus\{0\}$;

\item $\F_p(\MM_{A_n})$ is $K_1$-complemented in $\F_p(\MM_{A_m})$ for all $n,m\in \Int$ with $0\le n\leq m$;
\item $\F_p(\MM_{(c_3, c_2]})$ is $K_2$-complemented in $\F_p(\MM_{A_0})$; and

\item If $B_n=( c_1\mu^{-1}R^{-n}, c_2\mu R^{-n}]$, then
$\MM_{A_{n}}$ is complementably $p$-amenable in $\MM_{B_n}$ with constant $K_3$ for $n$ large enough.
\end{enumerate}
\end{Theorem}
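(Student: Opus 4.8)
The plan is to run the proof of Theorem~\ref{thm:sumisomorphimGeneralized} in reverse scale: every occurrence of the growing annuli $(c_1R^{n},c_2R^{n}]$ is replaced by the shrinking annuli $A_n=(c_1R^{-n},c_2R^{-n}]$, and the hole $d(0,x)>c_1$ around the basepoint is replaced by the boundedness hypothesis $d(0,x)\le c_2$. Passing to the logarithmic scale and writing $a=\log_R c_1$, $b=\log_R c_2$, I work with the intervals $I_n=(a-n,b-n]$, $n\in\Nat_*$, so that $R^{I_n}=A_n$ and $\MM^*_{R^{I_n}}=\MM^*_{A_n}$. Since $b-a=\log_R(c_2/c_1)>\lambda>1$, consecutive intervals overlap, the family $(I_n)_{n\in\Nat_*}$ is $k$-overlapping with $k$ controlled by $b-a$, and $\bigcup_n I_n=(-\infty,b]$. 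By boundedness this half-line already contains $\{\log_R d(0,x)\colon x\in\MM\setminus\{0\}\}$, and the accumulation of the scales now sits at $-\infty$ rather than at $+\infty$; adjoining empty annuli $A_n$ for $n<0$ (which contribute null summands $\F_p(\{0\})$) we may take $\bigcup_{n\in\Int}I_n=\Rea$ and invoke the earlier results verbatim.

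First I would produce the two complementation relations between $\F_p(\MM)$ and the annular sum. Applying Theorem~\ref{thm:complemented} to the $k$-overlapping family $(I_n)_{n\in\Int}$ set up above — whose negative-index summands are null and may be discarded — gives
\[
\F_p(\MM)\unlhd_{C}\Big(\bigoplus_{n\in\Nat_*}\F_p(\MM^*_{A_n})\Big)_p.
\]
For the reverse relation I would invoke Theorem~\ref{thm:reversecomplementedGerenalization}. Its hypothesis is complementable $p$-amenability of the \emph{starred} annuli $\MM^*_{A_n}$ inside $\MM^*_{B_n}$, where $B_n=(c_1\mu^{-1}R^{-n},c_2\mu R^{-n}]$ fattens $A_n$ by the logarithmic margin $r=\log_R\mu$; this I would deduce from hypothesis~(4), stated for the unstarred annuli, by adjoining the basepoint through Lemma~\ref{lem:separatedpoint}\ref{it:addPoint}, at a cost uniform in $n$. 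The remaining catch is that Theorem~\ref{thm:reversecomplementedGerenalization} demands that the enclosing intervals be \emph{pairwise disjoint}, whereas here they overlap; I would resolve this by splitting $\Nat_*$ into finitely many residue classes modulo a large fixed integer $s$, so that within each class the fattened intervals are disjoint, applying the theorem on each class, and recombining the pieces.

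The collapse to the stated isomorphisms is then carried out by Pe{\l}czy\'nski's method through Lemma~\ref{lem:InfiniteSums}. Hypothesis~(2) provides exactly the monotone complementation $\F_p(\MM_{A_n})\unlhd_{K_1}\F_p(\MM_{A_m})$ for $n\le m$ that the lemma requires on the poset $(\Nat_*,\le)$, and the unboundedness of $\phi$ supplies its cofinality condition. Lemma~\ref{lem:InfiniteSums} thus shows that all the sums $\big(\bigoplus_{n\in N}\F_p(\MM_{A_{\phi(n)}})\big)_p$ are mutually $C$-isomorphic and each is $C$-isomorphic to its own $\ell_p$-sum; combined with the two complementation relations above and one further application of the decomposition method, this yields $\F_p(\MM)\simeq_C\big(\bigoplus_{n}\F_p(\MM^*_{A_{\phi(n)}})\big)_p$ and $\F_p(\MM)\simeq_C\ell_p(\F_p(\MM))$. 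The equivalence between the sums built over $\MM^*_{A_{\phi(n)}}$ and over $\MM_{A_{\phi(n)}}$ is handled by hypothesis~(3) together with Lemma~\ref{lem:separatedpoint}\ref{it:omitPoint1dim}: the $K_2$-complementation of $\F_p(\MM_{(c_3,c_2]})$ in $\F_p(\MM_{A_0})$ isolates in each annulus a uniformly complemented core disjoint from its neighbours, while the one-dimensional contribution produced by deleting or adjoining the isolated basepoint $0$ in each $\MM^*_{A_n}$ costs only the uniform factor of Lemma~\ref{lem:separatedpoint} and is absorbed into the $\ell_p$-summands.

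The step I expect to be the real obstacle is reconciling the incompatible geometric demands of the two complementation theorems — the dense, overlapping cover needed for Theorem~\ref{thm:complemented} against the sparse, disjoint family needed for Theorem~\ref{thm:reversecomplementedGerenalization} — and doing so with constants independent of $\MM$. This is precisely where the self-similarity hypothesis~(2) must be used to pass, without loss of constants, between sums over a sparse subsequence and sums over the full sequence, and where the parameters $c_3$, $\alpha$, $\lambda$, $\mu$ have to be balanced so that the fattening margin $r=\log_R\mu$, the overlap multiplicity, and the excised core $(c_3,c_2]$ are simultaneously compatible. Once this bookkeeping is in place, the remainder is a faithful reflection of the proof of Theorem~\ref{thm:sumisomorphimGeneralized}, the accumulation of the annuli at the basepoint causing no new analytic difficulty because the Lipschitz estimates in Lemma~\ref{lem:KaltonAnologyGeneralized} are invariant under rescaling.
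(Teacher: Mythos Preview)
Your overall architecture is the paper's: apply Theorem~\ref{thm:complemented} to get $\F_p(\MM)$ complemented in an annular $\ell_p$-sum, apply Theorem~\ref{thm:reversecomplementedGerenalization} on a sparse arithmetic subfamily to get the reverse relation, and conclude via Lemma~\ref{lem:InfiniteSums} and Pe{\l}czy\'nski's decomposition. That part is fine.

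There is, however, a genuine gap in your forward step, and it is exactly the place where hypothesis~(3) is needed. You claim that extending $(I_n)_{n\in\Nat_*}$ to all $n\in\Int$ produces null summands for $n<0$; this is false. Since $b-a=\log_R(c_2/c_1)>\lambda>1$, for $n=-1$ one has $A_{-1}=(c_1R,c_2R]$, and because $c_1R<c_2$ the intersection with the actual range $(0,c_2]$ is $(c_1R,c_2]$, typically nonempty. The same happens for each $n$ with $-\lfloor b-a\rfloor<n<0$. Nothing in the hypotheses tells you that $\F_p(\MM_{(c_1R,c_2]})$ is complemented in any $\F_p(\MM_{A_m})$, so these stray summands cannot be absorbed and the family $(I_n)_{n\in\Int}$ does not feed into Lemma~\ref{lem:InfiniteSums}.

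The paper's fix (transcribed from the proof of Theorem~\ref{thm:sumisomorphimGeneralized}) is to cap the family with a single half-line: in the present orientation one takes $\{I_n:n\in\Nat_*\}\cup\{[\log_R c_3,\infty)\}$. Boundedness gives $\MM^*_{R^{[\log_R c_3,\infty)}}=\MM^*_{(c_3,c_2]}$, and hypothesis~(3) is precisely the statement that $\F_p(\MM_{(c_3,c_2]})$ is $K_2$-complemented in $\F_p(\MM_{A_0})$, so this extra summand is absorbed into the $n=0$ term and one lands in a sum indexed by an unbounded $\phi_1\colon\{-1,0,1,\dots\}\to\Nat_*$ with $\phi_1(-1)=0$. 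Your later invocation of hypothesis~(3) for the starred/unstarred comparison is misplaced: that passage needs only Lemma~\ref{lem:separatedpoint}\ref{it:omitPoint1dim} (the basepoint is isolated in each $\MM^*_{A_n}$) together with the complementation of $\ell_p$ in $\F_p$ of an infinite metric space to absorb the resulting copy of $\ell_p$.
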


\begin{Theorem}\label{thm:sumisomorphimGeneralized:2}
Let $(\MM, d,0)$ be an infinite pointed metric space.
Let $0<p\le 1$. Given constants $R>1$, $\lambda>1$, $\mu>1$, $K_1>1$, and $K_2>1$ there is a constant $C=C(p,R,\lambda,\mu,K_1,K_2)$ such that
\[
\F_p(\MM)
\simeq_C \ell_p (\F_p(\MM))
\simeq_C \ell_p (\F_p(\MM^*_{(c_1,c_2]}))
\simeq_C \ell_p (\F_p(\MM_{(c_1,c_2]}))
\]
whenever $c_1>0$ and $c_2>0$ satisfy $\log_R(c_2/c_1)\ge\lambda$, and\begin{enumerate}[label={$\bullet$}]
\item if $A_n=(c_1R^n, c_2R^n]$, then
$\F_p(\MM_{A_n})\simeq_{K_1}\F_p(\MM_{A_0})$ for all $n\in \Int$; and

\item if $B_n=(c_1\mu^{-1}R^n, c_2 \mu R^{n})$, then $\MM_{A_{n}}$ is complementably $p$-amenable in $\MM_{B_n}$ with constant $K_2$ for all $n\in\Int$.
\end{enumerate}
\end{Theorem}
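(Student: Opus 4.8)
The plan is to realize $\F_p(\MM)$ as a two-sided complemented companion of $Y:=\ell_p(\F_p(\MM^*_{(c_1,c_2]}))$ and then invoke Pe{\l}czy\'nski's decomposition method. Throughout, write $a=\log_R c_1$, $b=\log_R c_2$ and $m=\log_R\mu$, so that $b-a\ge\lambda$, the sets $I_n:=(a+n,b+n]$ satisfy $R^{I_n}=A_n$, and $\tilde J_n:=(a-m+n,b+m+n)$ satisfies $R^{\tilde J_n}=B_n$; note $A_0=(c_1,c_2]$. The recurring subtlety is that $b-a=\log_R(c_2/c_1)$ is unbounded, whereas $C$ must not depend on $c_1,c_2$; this forces us to work with sparse subfamilies of $(A_n)_{n\in\Int}$ whose spacing scales with $b-a$.

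First I would record a uniform annular model. Since $0$ is isolated in $\MM^*_{A_n}$ (all other points lie at distance $>c_1R^n$), Lemma~\ref{lem:separatedpoint}\ref{it:omitPoint1dim} gives $\F_p(\MM^*_{A_n})\simeq\Rea\oplus\F_p(\MM_{A_n})$ with a constant depending only on $p$; combined with the hypothesis $\F_p(\MM_{A_n})\simeq_{K_1}\F_p(\MM_{A_0})$ this yields $\F_p(\MM^*_{A_n})\simeq_{C_0}\F_p(\MM^*_{A_0})$ for a constant $C_0=C_0(p,K_1)$ and all $n\in\Int$. Note that the hypothesis forces every annulus to carry a free space isomorphic to the fixed space $\F_p(\MM_{A_0})$, so when $\F_p(\MM_{A_0})\ne 0$ each $A_n$, $n\in\Int$, is genuinely populated (the degenerate case $\F_p(\MM_{A_0})=0$ makes $\MM$ uniformly separated and is checked directly). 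Consequently, for any infinite subfamily $(A_{n_j})_j$ we get $\big(\bigoplus_j\F_p(\MM^*_{A_{n_j}})\big)_p\simeq_{C_0}Y$, and $Y$ satisfies $\ell_p(Y)\simeq Y$ isometrically.

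For the forward embedding $\F_p(\MM)\unlhd Y$ I would apply Theorem~\ref{thm:complemented} to the sparse family $(I_{g_1n})_{n\in\Int}$, where $r_1:=(\lambda-1)/6>0$ and $g_1:=\lfloor (b-a)-3r_1\rfloor\ge1$. The shrunk intervals $[a+g_1n+2r_1,\,b+g_1n-r_1]$ cover $\Rea$ with margin $r_1$, and—this is the delicate estimate—the family $(I_{g_1n})_n$ is $k$-overlapping with $k\le 2\lambda/(\lambda-1)+2$, a bound independent of $c_1,c_2$ precisely because $g_1$ is comparable to $b-a$. Theorem~\ref{thm:complemented} then gives $\F_p(\MM)\unlhd_{C_1}\bigoplus_n\F_p(\MM^*_{A_{g_1n}})$ with $C_1=C_1(p,R,\lambda)$, whence $\F_p(\MM)\unlhd Y$ by the previous paragraph. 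For the reverse embedding $Y\unlhd\F_p(\MM)$ I would apply Theorem~\ref{thm:reversecomplementedGerenalization} to a much sparser family, taking $r_2:=m$ and gap $g_2:=\lceil (b-a)+2m\rceil$; then the intervals $\tilde J_{g_2n}$ are pairwise disjoint and $I_{g_2n}\subseteq[\inf\tilde J_{g_2n}+r_2,\ \sup\tilde J_{g_2n}-r_2]$. Here one must upgrade the hypothesis that $\MM_{A_n}$ is complementably $p$-amenable in $\MM_{B_n}$ to the statement that $\MM^*_{A_{g_2n}}$ is complementably $p$-amenable in $\MM^*_{B_{g_2n}}$ by adjoining the common base point $0$; since $0$ sits at distance bounded below and above from these annuli, this costs only a factor depending on $p$, via Lemma~\ref{lem:complementedCondition} and Lemma~\ref{lem:separatedpoint}\ref{it:addPoint}. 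Theorem~\ref{thm:reversecomplementedGerenalization} then yields $\bigoplus_n\F_p(\MM^*_{A_{g_2n}})\unlhd_{C_2}\F_p(\MM)$, hence $Y\unlhd\F_p(\MM)$.

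Finally I would close with Pe{\l}czy\'nski's decomposition method: from $\F_p(\MM)\unlhd Y$, $Y\unlhd\F_p(\MM)$, and $Y\simeq\ell_p(Y)$ we conclude $\F_p(\MM)\simeq_C Y=\ell_p(\F_p(\MM^*_{(c_1,c_2]}))$ with $C=C(p,R,\lambda,\mu,K_1,K_2)$. The remaining identifications are then formal: $\ell_p(\F_p(\MM))\simeq\ell_p(Y)\simeq Y\simeq\F_p(\MM)$, and since $\F_p(\MM^*_{(c_1,c_2]})\simeq\Rea\oplus\F_p(\MM_{(c_1,c_2]})$ we get $Y\simeq\ell_p\oplus\ell_p(\F_p(\MM_{(c_1,c_2]}))\simeq\ell_p(\F_p(\MM_{(c_1,c_2]}))$ after absorbing the complemented copy of $\ell_p$. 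The main obstacle is exactly the tension in the gap selection: covering $\Rea$ in the forward direction caps the spacing near $b-a$, while disjointness of the fattened intervals $B_n$ in the reverse direction forces spacing above $(b-a)+2m$; these cannot be met by one subfamily, but they need not be, since Pe{\l}czy\'nski only requires mutual complementability and both one-sided sums collapse to the same space $Y$.
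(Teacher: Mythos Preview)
Your proof is correct and follows the same architecture as the paper: one applies Theorem~\ref{thm:complemented} to a suitable cover by translates of $I_0=(a,b]$ to get $\F_p(\MM)\unlhd\bigoplus\F_p(\MM^*_{A_{\cdot}})$, applies Theorem~\ref{thm:reversecomplementedGerenalization} to a disjoint subfamily (after adjoining the base point via Lemma~\ref{lem:separatedpoint}) to get the reverse complementation, and finishes with a Pe{\l}czy\'nski-type argument.

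The differences are minor but worth noting. The paper packages the endgame in Lemma~\ref{lem:InfiniteSums}, which handles arbitrary unbounded index maps and only needs the annular free spaces to be mutually \emph{complemented}; you instead exploit the stronger hypothesis of Theorem~\ref{thm:sumisomorphimGeneralized:2} (uniform \emph{isomorphisms} $\F_p(\MM_{A_n})\simeq_{K_1}\F_p(\MM_{A_0})$) to collapse every subfamily sum to the single model $Y$ and then invoke Pe{\l}czy\'nski directly, which is a legitimate shortcut here. More substantively, you are explicit about keeping the overlap number bounded independently of $b-a=\log_R(c_2/c_1)$ by passing to the arithmetic subprogression $(I_{g_1 n})_n$ with gap $g_1\asymp b-a$; the paper's written proof (for Theorem~\ref{thm:sumisomorphimGeneralized}, declared ``similar'') uses all integer translates and does not spell this uniformity out, so your treatment is actually more careful on that point. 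The one place to tighten your write-up is the degenerate clause: $\F_p(\MM_{A_0})=0$ does not literally force $\MM$ to be uniformly separated, only that each annulus $\MM_{A_n}$ is a singleton or empty; the conclusion is still easily checked in that case, but the stated reason should be adjusted.
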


\begin{proof}[Proof of Theorems~\ref{thm:sumisomorphimGeneralized}, \ref{thm:sumisomorphimGeneralized:1} and \ref{thm:sumisomorphimGeneralized:2}] As the proofs of Theorem~\ref{thm:sumisomorphimGeneralized:2} and \ref{thm:sumisomorphimGeneralized:1} are similar, we shall only take care of the proof of Theorem~\ref{thm:sumisomorphimGeneralized}.

By Lemma~\ref{lem:separatedpoint} we have (for possibly larger constants $K_1$, $K_2$ and $K_3$) that $\F_p(\MM_{A_n}^*)$ is $K_1$-complemented in $\F_p(\MM_{A_m}^*)$, that $\F_p(\MM_{(c_1,c_3]}^*)$ is $K_2$-complemented if $\F_p(\MM_{A_0}^*)$, and that $\MM^*_{A_{n+2}}$ is complementably $p$-amenable in $\MM^*_{B_n}$ with constant $K_3$.

For $N$ countable and $\phi\colon N\to\Nat_*$ unbounded, put
\[
X_\phi:=\left(\bigoplus_{n\in N} \F_p(\MM^*_{A_{\phi(n)}})\right)_p.\] Let $N_1=\{n\in\Int \colon n\ge -1\}$ and $\phi_1\colon N_1\to \Nat_*$ be defined $\phi_1(n)=\max\{n,0\}$.
Applying Theorem~\ref{thm:complemented} with the family of intervals
\[
\{(-\infty, c_3]\}\cup\{ (\log_R(c_1) +n,\log_R(c_2) +n] \colon n\in\Nat_* \}
\]
we obtain $\F_p(\MM)\unlhd_{C_1} X_{\phi_1}$ for some constant $C_1$ depending on $p$, $\lambda$, $\alpha$, $R$ and $K_2$.

For $s\in\Nat$ large enough the intervals
\[
(-\log_R(\mu)+\log_R(c_1) +sn,\log_R(\mu)+\log_R(c_2) +sn), \quad n\in\Nat_*
\]
are mutually disjoint. Set $N_2=\{n\in\Int\colon n\ge n_0\}$ for a suitable $n_0\in\Nat$ and define $\phi_2\colon N_2\to\Nat_*$ by $\phi_2(n)=sn$.
Applying Theorem~\ref{thm:reversecomplementedGerenalization} with the family of intervals
\[
I_n=(\log_R(c_1) +sn,\log_R(c_2) +sn], \quad n\in\Nat, \, n\ge n_0,
\]
we obtain $ X_{\phi_2}\unlhd_{C_2}\F_p(\MM)$ for some constant $C_2$ depending on $p$, $\mu$ $R$, $K_1$ and $K_3$.

An appeal to Lemma~\ref{lem:InfiniteSums}, Lemma~\ref{lem:separatedpoint} and \cite{AACD2019}*{Theorem 3.2} completes the proof.
\end{proof}

\begin{Remark} Analogous results to Theorems~\ref{thm:sumisomorphimGeneralized}, \ref{thm:sumisomorphimGeneralized:1}, and \ref{thm:sumisomorphimGeneralized:2} hold replacing left-open and right-closed intervals with open, closed, or right-open and left-closed intervals.
\end{Remark}

\section{Applications}\label{sec:appl}\noindent
This section is devoted to applications of the techniques we developed in the preceding section to advance the state of art of the general theory even in the case $p=1$.
\subsection{Applications to metric spaces with few limit points}
Here we gather our first applications. Even though some of these applications were not explicitly stated for $p=1$, they follow from known results (see, e.g., Proposition~\ref{prop:FpSubsym}).

\begin{Proposition}\label{prop:finiteAcPoints} Suppose that $\MM$ is a complete metric space with finitely many accumulation points. Then there is a countable family $(\MM_n)_{n\in N}$ of closed bounded subsets of $\MM$ with no accumulation points such that
\[
\F_p(\MM)\unlhd \left(\bigoplus_{n=1}^\infty \F_p(\MM_n)\right)_p,
\]
for all $0<p\le 1$.\end{Proposition}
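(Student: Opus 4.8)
The plan is to argue by induction on the number $m$ of accumulation points of $\MM$, peeling off one accumulation point at a time with Theorem~\ref{thm:complemented}. The guiding observation is that if we center the annular decomposition of Theorem~\ref{thm:complemented} at an accumulation point $a$, then $a$ sits at distance $0$ from the center and hence lies at distance at least $\inf R^{I_n}>0$ from every point of each annulus $\MM^*_{R^{I_n}}$; thus $a$ becomes an \emph{isolated} point of each annulus, and since accumulation points of a subset are accumulation points of the ambient space, each annulus inherits strictly fewer accumulation points than $\MM$. Concretely, I would prove the statement $P(m)$: there is a constant $C_m$ (independent of the space) such that every complete metric space with at most $m$ accumulation points admits a family $(\MM_n)_n$ of closed bounded subsets with no accumulation points for which $\F_p(\MM)\unlhd_{C_m}\left(\bigoplus_n \F_p(\MM_n)\right)_p$. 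For the base case $m=0$ the space is discrete; fixing any base point and applying Theorem~\ref{thm:complemented} to a fixed $k$-overlapping family $(I_n)_{n\in\Int}$ of closed intervals (say $I_n=[n-r,n+1+r]$, so that $\bigcup_n[n,n+1]=\Rea$) yields $\F_p(\MM)\unlhd_{C_0}\left(\bigoplus_n \F_p(\MM^*_{R^{I_n}})\right)_p$, where each $\MM^*_{R^{I_n}}$ is bounded (its distances to the center lie in the bounded set $R^{I_n}$), closed (as $R^{I_n}$ is a compact interval and $d(0,\cdot)$ is continuous), and free of accumulation points (being a subset of a discrete space).

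For the inductive step with $m\ge 1$, I would choose the base point $0=a_1$ to be one of the accumulation points $a_1,\dots,a_m$ and apply Theorem~\ref{thm:complemented} with the same fixed family, obtaining $\F_p(\MM)\unlhd_{C_0}\left(\bigoplus_n \F_p(\MM^*_{R^{I_n}})\right)_p$. Each piece $\MM^*_{R^{I_n}}$ is closed, hence complete as a closed subset of the complete space $\MM$. By the opening remark, $a_1$ is isolated in $\MM^*_{R^{I_n}}$, so the accumulation points of $\MM^*_{R^{I_n}}$ form a subset of $\{a_2,\dots,a_m\}$; in particular there are at most $m-1$ of them. The induction hypothesis then supplies, with one and the same constant $C_{m-1}$ for all $n$, decompositions $\F_p(\MM^*_{R^{I_n}})\unlhd_{C_{m-1}}\left(\bigoplus_l \F_p(\MM_{n,l})\right)_p$ into closed bounded subsets $\MM_{n,l}\subseteq\MM^*_{R^{I_n}}\subseteq\MM$ with no accumulation points (closedness and the absence of accumulation points transfer to $\MM$ because $\MM^*_{R^{I_n}}$ is closed in $\MM$).

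It remains to splice the two levels, and this is the only place requiring genuine care: since the conclusion is merely the relation $\unlhd$, one must keep the constants uniform so that the diagonal operators stay bounded on the $\ell_p$-sum. Normalising each embedding to have norm one, the uniform bound $C_{m-1}$ over all $n$ gives, via diagonal operators, $\left(\bigoplus_n \F_p(\MM^*_{R^{I_n}})\right)_p\unlhd_{C_{m-1}}\left(\bigoplus_n\left(\bigoplus_l \F_p(\MM_{n,l})\right)_p\right)_p$, and associativity of the $\ell_p$-sum identifies the right-hand side with the single sum $\left(\bigoplus_{(n,l)} \F_p(\MM_{n,l})\right)_p$ over the countable index set $\{(n,l)\}$. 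Composing with the first-level complementation and using transitivity of $\unlhd$ (which multiplies constants) produces $\F_p(\MM)\unlhd_{C_0 C_{m-1}}\left(\bigoplus_{(n,l)} \F_p(\MM_{n,l})\right)_p$, so we may set $C_m=C_0C_{m-1}$ and reindex the family by $\Nat$, completing the induction. The main obstacle is conceptual rather than computational, namely recognising that centering at an accumulation point turns it into an isolated point of each annulus and thereby strictly lowers the accumulation-point count; once this is in place, the remaining work (choosing closed intervals so the pieces are complete, and propagating a single uniform constant through the recursion so that an $\ell_p$-sum of complementations is again a complementation) is routine.
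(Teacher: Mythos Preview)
Your argument is correct and follows the same inductive scheme as the paper's proof: induct on the number of accumulation points, choose an accumulation point as base point, and apply Theorem~\ref{thm:complemented} so that this point becomes isolated in every annulus. The one difference is in how the inductive step is organised. You use a \emph{fixed} family of intervals and recurse on \emph{every} annulus, which---as you correctly identify---forces you to strengthen the induction hypothesis to carry a uniform constant $C_{m-1}$ through infinitely many simultaneous applications; the splicing via diagonal operators on the $\ell_p$-sum then goes through. The paper instead chooses $c>0$ and $R>1$ depending on the distances $d(0,a_2),\dots,d(0,a_m)$ so that all of the remaining accumulation points fall into just two adjacent annuli $\MM_0,\MM_1$; every other annulus is already discrete, and the induction hypothesis is invoked only on those two pieces, so no uniformity bookkeeping is needed. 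The paper's route is a little slicker, while yours yields the small bonus of an explicit complementation constant depending only on $m$ and $p$ (not on $\MM$).
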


\begin{proof}
Let $k$ be the number of accumulation points of $\MM$. We proceed by induction on $k$. We prove the case $k=0$ and the general case simultaneously. In the case when $k\ge 1$ we choose as base point of $\MM$ an accumulation point of $\MM$. In the case when $k=0$ we choose as base point an arbitrary point of $\MM$.
Let $c>0$ and $R>1$ be such that all the accumulation points except $0$ are in $\MM_{(cR,cR^2)}$.
Applying Theorem~\ref{thm:complemented} with $I_n=[n+\log_R c,n+2+\log_R c]$ for $n\in\Int$, yields
\[
\F_p(\MM)\unlhd \left(\bigoplus_{n\in\Int} \F_p(\MM_n)\right)_p,
\]
where $\MM_n$ is bounded and closed for every $n\in \Int $, $\MM_n$ has no accumulation points if $n\notin\{0,1\}$, and $\MM_n$ has at most $\max\{ k-1, 0\}$ accumulation points if $n\in\{0,1\}$. The case $k=0$ is finished. In the general case, we may now apply the induction hypothesis for $k-1$.
\end{proof}

\begin{Corollary}\label{cor:lpoflp}Let $\MM$ be a uniformly separated metric space. Then for each $0<p\le 1$ there are $p$-Banach spaces $(X_n)_{n=1}^\infty$ such that $\F_p(\MM)\unlhd (\oplus_{n=1}^\infty X_n)_p$ and $X_n\simeq \ell_p(I)$ for all $n\in\Nat$, where $I=|\MM|$.
\end{Corollary}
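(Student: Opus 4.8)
The plan is to deduce the statement from Proposition~\ref{prop:finiteAcPoints}. First observe that a uniformly separated metric space is automatically complete (any Cauchy sequence is eventually constant, since distinct points stay at distance at least $\theta:=\inf_{x\ne y}d(x,y)>0$) and has no accumulation points. Hence Proposition~\ref{prop:finiteAcPoints} applies in the case $k=0$ and produces a sequence $(\MM_n)_{n=1}^\infty$ of closed bounded subsets of $\MM$, each containing the base point $0$, with
\[
\F_p(\MM)\unlhd \left(\bigoplus_{n=1}^\infty\F_p(\MM_n)\right)_p.
\]
Being subsets of the uniformly separated space $\MM$, the $\MM_n$ are uniformly separated with the same constant $\theta$, and each is bounded, say $D_n:=\sup_{x,y\in\MM_n}d(x,y)<\infty$.

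The core computation is that $\F_p(\MM_n)\simeq\ell_p(\Gamma_n)$, where $\Gamma_n:=\MM_n\setminus\{0\}$ and $|\Gamma_n|\le|\MM|=I$. Indeed, the linear map $Q_n\colon\ell_p(\Gamma_n)\to\F_p(\MM_n)$ determined by $Q_n(e_x)=\delta(x)$ satisfies $\|Q_n\|\le D_n$ by $p$-subadditivity of the quasi-norm (as $\|\delta(x)\|=d(0,x)\le D_n$), and its range contains the dense span of the $\delta(x)$. Conversely, the map $g_n\colon\MM_n\to\ell_p(\Gamma_n)$ with $g_n(0)=0$ and $g_n(x)=e_x$ for $x\in\Gamma_n$ is $(2^{1/p}/\theta)$-Lipschitz, since distinct points of $\MM_n$ are at distance at least $\theta$ while $\|e_x-e_y\|\le 2^{1/p}$; by the universal property of the Lipschitz free $p$-space (\cite{AACD2018}*{Theorem 4.5}) it linearizes to a bounded $R_n\colon\F_p(\MM_n)\to\ell_p(\Gamma_n)$ with $R_n(\delta(x))=e_x$. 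Then $R_nQ_n=\Id$ and $Q_nR_n=\Id$ on the dense span of the $\delta(x)$, so $Q_n$ is an isomorphism. Crucially, this argument never uses duality and relies only on the universal property, which is exactly why it remains valid for $0<p<1$.

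The remaining and most delicate point is that the naive choice $X_n=\F_p(\MM_n)$ fails on two counts: the cardinalities $|\Gamma_n|$ may be strictly smaller than $I$ (so $\F_p(\MM_n)\not\simeq\ell_p(I)$), and the isomorphism constants $\sim 2^{1/p}D_n/\theta$ blow up as $D_n\to\infty$, so one cannot take a bounded $\ell_p$-sum of the isomorphisms $\F_p(\MM_n)\simeq\ell_p(\Gamma_n)$. Both obstructions are resolved at once by padding each summand: set
\[
X_n:=\big(\F_p(\MM_n)\oplus\ell_p(I)\big)_p.
\]
Since $I$ is infinite and $|\Gamma_n|\le I$, we have $X_n\simeq\big(\ell_p(\Gamma_n)\oplus\ell_p(I)\big)_p\cong\ell_p(\Gamma_n\sqcup I)\cong\ell_p(I)$, so each $X_n$ is isomorphic to $\ell_p(I)$ (with a constant that may depend on $n$, which is harmless). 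At the same time the coordinate inclusion $\F_p(\MM_n)\hookrightarrow X_n$ is an isometry admitting a norm-one coordinate projection, so $\F_p(\MM_n)$ is $1$-complemented in $X_n$ uniformly in $n$. Assembling these coordinatewise shows that $\big(\bigoplus_n\F_p(\MM_n)\big)_p$ is $1$-complemented in $\big(\bigoplus_n X_n\big)_p$, and transitivity of complementability combined with the first display yields $\F_p(\MM)\unlhd\big(\bigoplus_n X_n\big)_p$ with $X_n\simeq\ell_p(I)$, as desired.

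I expect the padding step to be the conceptual heart of the argument: it keeps the uniform (isometric) embeddings at the level of the $\ell_p$-sum while pushing the non-uniform isomorphisms $X_n\simeq\ell_p(I)$ down to the individual summands, where no uniformity is needed, and simultaneously absorbs the possibly-small index sets $\Gamma_n$ into the fixed target $\ell_p(I)$. The case of finite $\MM$ is immediate and can be dispatched separately: there the same molecule computation gives $\F_p(\MM)\simeq\ell_p^{\,|\MM|-1}$, a coordinate ($1$-complemented) subspace of $\ell_p(I)=\ell_p^{\,|\MM|}$, so one simply takes $X_n=\ell_p(I)$ for every $n$.
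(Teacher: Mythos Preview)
Your argument follows the paper's route: apply Proposition~\ref{prop:finiteAcPoints} to a uniformly separated (hence complete, with no accumulation points) space, and then use that each bounded uniformly separated $\MM_n$ satisfies $\F_p(\MM_n)\simeq\ell_p(|\MM_n|-1)$; the paper cites \cite{AACD2018}*{Theorem~4.14} for this, while you reprove it directly via the universal property. Your padding step $X_n:=(\F_p(\MM_n)\oplus\ell_p(I))_p$ is in fact more careful than the paper's own proof, which simply takes $X_n=\F_p(\MM_n)$ and remarks that the isomorphism with $\ell_p(|\MM_n|-1)$ is ``non necessarily uniformly'', without addressing how to upgrade $\ell_p(|\MM_n|-1)$ to $\ell_p(I)$ when $|\MM_n|<|\MM|$; your device handles both the cardinality mismatch and the non-uniformity at once.
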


\begin{proof}By Proposition~\ref{prop:finiteAcPoints},
$\F_p(\MM)\unlhd \left(\bigoplus_{n\in\Int} \F_p(\MM_n)\right)_p$, where each metric space $\MM_n$ is bounded and uniformly separated. Then, by \cite{AACD2018}*{Theorem 4.14}, $\F_p(\MM_n)\simeq \ell_p(|\MM_n|-1)$ (non necessarily uniformly).
\end{proof}

\begin{Corollary}\label{cor:unifDiscr}Let $\MM$ be a uniformly separated metric space. Then $\F_p(\MM)$ has the AP for each $0<p\le 1$.
\end{Corollary}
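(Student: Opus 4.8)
The plan is to combine the complementation relation just obtained in Corollary~\ref{cor:lpoflp} with the stability of the approximation property under $\ell_p$-sums and complemented subspaces that was recorded in Propositions~\ref{APP1} and~\ref{APP2}. In other words, the substantive geometric work is already done, and it remains only to feed the output of Corollary~\ref{cor:lpoflp} into the abstract permanence properties of the AP.

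First I would invoke Corollary~\ref{cor:lpoflp} to obtain $p$-Banach spaces $(X_n)_{n=1}^\infty$ with $\F_p(\MM)\unlhd\left(\bigoplus_{n=1}^\infty X_n\right)_p$ and $X_n\simeq\ell_p(I)$ for every $n$, where $I$ is an index set of cardinality $|\MM|$. The next step is to observe that each summand $\ell_p(I)$ has the AP. For this I would use the net of coordinate projections $(P_F)_F$ onto finite subsets $F\subseteq I$, directed by inclusion: these are finite-rank operators of norm one, and a standard total-boundedness argument shows $P_F\to\Id$ uniformly on compact sets (cover a compact set by finitely many $\varepsilon$-balls and absorb the finitely many relevant supports into a single finite $F$, using $\Vert P_F\Vert\le 1$ and the quasi-triangle inequality). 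Thus each $X_n$ has the AP; in fact $\ell_p(I)$ has the MAP, but the AP is all that is needed here.

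To apply Proposition~\ref{APP1} I must verify its hypothesis $\sup_n\kappa_2(X_n)<\infty$. Since each $X_n$ is a $p$-Banach space, the estimate $\kappa_2(X_n)\le 2^{1/p-1}$ noted in Section~\ref{sect:generalEllPSum}'s preliminaries holds uniformly in $n$, so the hypothesis is satisfied. Proposition~\ref{APP1} then yields that $\left(\bigoplus_{n=1}^\infty X_n\right)_p$ has the AP. Finally, because $\F_p(\MM)$ is $K$-complemented in this sum for some $K$, Proposition~\ref{APP2} transfers the AP to $\F_p(\MM)$, which completes the argument.

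As for the main obstacle, there is essentially none of a deep nature: the two points requiring any attention are confirming that $\ell_p(I)$ keeps the AP when the index set $I$ may be uncountable, and checking the uniform $\kappa_2$-bound needed by Proposition~\ref{APP1}. Both are routine for $p$-Banach spaces, so I expect the proof to be short and the difficulty to have been entirely front-loaded into Proposition~\ref{prop:finiteAcPoints} and Corollary~\ref{cor:lpoflp}.
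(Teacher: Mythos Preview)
Your proposal is correct and follows exactly the same route as the paper's proof: invoke Corollary~\ref{cor:lpoflp}, note that each $\ell_p(I)$ has the AP, and then apply Propositions~\ref{APP1} and~\ref{APP2} to pass the AP through the $\ell_p$-sum and down to the complemented subspace $\F_p(\MM)$. You have simply made explicit the routine verifications (the $\kappa_2$-bound and the AP for $\ell_p(I)$ with possibly uncountable $I$) that the paper leaves implicit.
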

\begin{proof} The statement follows from Corollary~\ref{cor:lpoflp} taking into account that $\ell_p$ has the AP, and that AP is preserved by $\ell_p$-sums and by complemented subspaces (see Propositions~\ref{APP1} and \ref{APP2}).
\end{proof}

\begin{Proposition}\label{prop:finiteCoronas}Let $\MM$ be a metric space. Suppose that there is a point $x\in\MM$ such that every annulus centered at $x$ has only finitely many points. Then for each $0<p\le 1$ there is a countable family $(X_n)_{n\in N}$ of finite-dimensional $p$-Banach spaces such that $\F_p(\MM)\unlhd (\oplus_{n\in N} X_n)_p$.
\end{Proposition}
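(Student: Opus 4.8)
The plan is to read the conclusion off Theorem~\ref{thm:complemented} by choosing the distinguished point as the base point and slicing $\MM$ into logarithmically spaced annuli, each of which the hypothesis forces to be finite. First I would set $0:=x$ and fix any $R>1$. For $n\in\Int$ put $I_n=(n-2,n+2)$, together with $a_n=n-\tfrac12$, $b_n=n+\tfrac12$, and $r=1$. Then $(-r+a_n,r+b_n)=(n-\tfrac32,n+\tfrac32)\subseteq I_n$, the family $(I_n)_{n\in\Int}$ is $4$-overlapping (hence point-finite), and $\bigcup_{n\in\Int}[a_n,b_n]=\Rea$, which is exactly the covering implicitly required by Lemma~\ref{lem:partitionunity} inside the proof of Theorem~\ref{thm:complemented}. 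These data feed directly into Theorem~\ref{thm:complemented} with $N=\Int$, producing a constant $C=C(p,R)$ with
\[
\F_p(\MM)\unlhd_C \bigoplus_{n\in\Int}\F_p\bigl(\MM^*_{R^{I_n}}\bigr).
\]

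The only substantive step is then to check that every summand is finite-dimensional. Since $R>1$ and $u\mapsto R^u$ is increasing, $R^{I_n}=(R^{n-2},R^{n+2})$ is a bounded subinterval of $(0,\infty)$ that is bounded away from both $0$ and $\infty$. Consequently $\MM_{R^{I_n}}=\{y\in\MM:d(0,y)\in(R^{n-2},R^{n+2})\}$ is an annulus centered at $x$, so by hypothesis it has only finitely many points. Therefore $\MM^*_{R^{I_n}}=\{0\}\cup\MM_{R^{I_n}}$ is a finite pointed metric space and $X_n:=\F_p(\MM^*_{R^{I_n}})$ is a finite-dimensional $p$-Banach space (of dimension $|\MM_{R^{I_n}}|$). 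Taking $N=\Int$ and this family $(X_n)_{n\in N}$ finishes the argument.

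There is essentially no genuine obstacle beyond bookkeeping; the one point worth isolating is the mild tension between the two requirements on the intervals. We need the $R^{I_n}$ to be bounded (and bounded away from $0$) so that the annulus hypothesis applies and $X_n$ is finite-dimensional, yet we simultaneously need $\bigcup_n R^{I_n}=(0,\infty)$ so that every $y\in\MM\setminus\{0\}$ is captured and the complemented embedding of Theorem~\ref{thm:complemented} is valid. Indexing by $n\in\Int$ resolves this automatically: the bounded annuli exhaust $(0,\infty)$ in both directions, covering the unbounded part of $\MM$ for $n\to+\infty$ and the neighborhood of the possible accumulation point $x$ itself for $n\to-\infty$, while no single annulus ever reaches $0$ or $\infty$. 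Note that this mirrors the strategy of Proposition~\ref{prop:finiteAcPoints}, the difference being that here the annulus hypothesis directly yields finiteness rather than merely a reduction in the number of accumulation points.
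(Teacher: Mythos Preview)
Your proof is correct and follows essentially the same approach as the paper's, which simply reads ``Just apply Theorem~\ref{thm:complemented} with $I_n=[n,n+2]$ for $n\in\Int$.'' The only difference is cosmetic---you use wider intervals $I_n=(n-2,n+2)$ and spell out the bookkeeping data $(a_n,b_n,r)$ and the finiteness of each annulus explicitly---but the idea is identical.
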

\begin{proof}
Just apply Theorem~\ref{thm:complemented} with $I_n=[n,n+2]$ for $n\in\Int$.
\end{proof}

Since finite-dimensional spaces have the MAP, it follows from Proposition~\ref{prop:finiteCoronas} that if $\MM$ is a metric space with finite annuli, then $\F_p(\MM)$ has BAP. We can improve this result by squeezing a bit more our techniques.

\begin{Proposition}\label{prop:commutingBAP} Suppose $\MM$ is a metric space with a point $x\in\MM$ such that every annulus centered at $x$ contains only finitely many points. Then for each $0<p\le 1$, the space $\F_p(\MM)$ has the $C$-commuting BAP for every $C>4^{1/p}$.
\end{Proposition}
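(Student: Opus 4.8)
The plan is to construct, directly on $\F_p(\MM)$, an explicit sequence of commuting finite-rank operators approximating the identity, bypassing the $\ell_p$-sum machinery (although the same operators can be read off from Theorem~\ref{thm:complemented}). Write $0:=x$ for the distinguished point, so $(\MM,d,0)$ is pointed and every annulus $\{y:a<d(0,y)\le b\}$ with $0<a\le b<\infty$ is finite. Fix $R>1$ and a width $w>0$ to be chosen, and for each $M\in\Nat$ let $g_M\colon\Rea\to[0,1]$ be the piecewise-linear cutoff equal to $1$ on $[-M,M]$, equal to $0$ off $(-M-w,M+w)$, and affine on the two transition intervals; thus $\Lip(g_M)=1/w$ and $\supp g_M\subseteq(-M-w,M+w)$.

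Next I would linearize a radial weighting of the Dirac map. Consider $h_M\colon\MM\to\F_p(\MM)$ with $h_M(x)=g_M(\log_R d(0,x))\,\delta_\MM(x)$ for $x\neq 0$ and $h_M(0)=0$. I claim $h_M$ is Lipschitz with constant independent of $M$: for $x,y\in\MM$ with $d(0,y)\le d(0,x)$, splitting $g_M(\log_R d(0,x))\delta_\MM(x)-g_M(\log_R d(0,y))\delta_\MM(y)$ into $g_M(\log_R d(0,x))(\delta_\MM(x)-\delta_\MM(y))$ plus $(g_M(\log_R d(0,x))-g_M(\log_R d(0,y)))\delta_\MM(y)$ and using $p$-subadditivity together with $\|\delta_\MM(x)-\delta_\MM(y)\|=d(x,y)$, $\|\delta_\MM(y)\|=d(0,y)$, $|g_M|\le 1$, and the elementary bound
\[
|g_M(\log_R d(0,x))-g_M(\log_R d(0,y))|\,d(0,y)\le \frac{\Lip(g_M)}{\log R}\big(d(0,x)-d(0,y)\big)\le \frac{d(x,y)}{w\log R}
\]
(which comes from $\log t\le t-1$ and the genuine triangle inequality of the metric space $\MM$), one obtains
\[
\|h_M(x)-h_M(y)\|^p\le\big(1+(w\log R)^{-p}\big)\,d(x,y)^p .
\]
Hence, by the universal property of $\F_p(\MM)$ (\cite{AACD2018}*{Theorem 4.5}), $h_M$ linearizes to a bounded operator $T_M\colon\F_p(\MM)\to\F_p(\MM)$ with $T_M\delta_\MM(x)=g_M(\log_R d(0,x))\delta_\MM(x)$ and $\|T_M\|\le(1+(w\log R)^{-p})^{1/p}$.

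Then I would verify the three defining features. For finite rank, the range of $T_M$ is spanned by $\{h_M(x):x\in\MM\}$, and $h_M(x)\neq 0$ forces $d(0,x)\in(R^{-M-w},R^{M+w})$, an annulus, hence a finite set by hypothesis. For commutativity, $T_MT_{M'}\delta_\MM(x)=g_M(\log_R d(0,x))g_{M'}(\log_R d(0,x))\delta_\MM(x)$ is symmetric in $M,M'$, so $T_MT_{M'}=T_{M'}T_M$ by linearity and continuity. For convergence to $\Id$, the family $(T_M)$ is uniformly bounded and, for fixed $x\neq 0$, $g_M(\log_R d(0,x))=1$ once $M\ge|\log_R d(0,x)|$, so $T_M\to\Id$ pointwise on the dense subspace $\spn\{\delta_\MM(x)\}$; a uniform bound plus pointwise convergence on a dense set gives uniform convergence on compact sets.

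Finally, given $C>4^{1/p}$ I would fix $w,R$ with $(1+(w\log R)^{-p})^{1/p}<C$, which is possible because $C^p>4$ forces $C^p-1>3>0$, so it suffices that $(w\log R)^{-p}<C^p-1$. Then $\sup_M\|T_M\|<C$, and with the previous paragraph the sequence $(T_M)$ witnesses the commuting $C$-BAP. The one delicate point is the Lipschitz estimate for $h_M$: the crucial cancellation uses that $\MM$ is an honest metric space (so $d(0,x)-d(0,y)\le d(x,y)$), and it is this step, rather than the bookkeeping, that controls the constant. In fact the computation yields the sharper bound $(1+(w\log R)^{-p})^{1/p}$, which can be pushed arbitrarily close to $1$; one clean way to land exactly at the stated threshold $4^{1/p}$ is instead to extract these same operators from Lemma~\ref{lem:KaltonAnologyGeneralized} and Theorem~\ref{thm:complemented} applied to a $2$-overlapping family, where the crude $\ell_p$-sum estimate contributes the factor $(2\cdot 2)^{1/p}$.
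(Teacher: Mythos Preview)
Your proof is correct and in fact yields a stronger conclusion than the paper states: your direct estimate $\|T_M\|\le(1+(w\log R)^{-p})^{1/p}$ can be made arbitrarily close to $1$, so you actually obtain the commuting $C$-BAP for every $C>1$, not merely $C>4^{1/p}$. The underlying operators are the same as the paper's---both are multiplication of $\delta_\MM(x)$ by a radial cutoff $g(\log_R d(0,x))$---but the paper factors them as $S_m=P_m\circ S\circ T$ through the $\ell_p$-sum of annuli and then bounds $\|S_m\|\le\|T\|$ via the constant in \eqref{NormofT} with $k=2$, which carries the crude $\ell_p$-sum factor $(2k)^{1/p}=4^{1/p}$. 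By estimating the Lipschitz constant of $x\mapsto g_M(\log_R d(0,x))\delta_\MM(x)$ directly, you avoid that loss entirely. (Incidentally, the paper's assertion that $S_m\circ S_{m'}=S_{\min\{m,m'\}}$ is slightly too strong when $m=m'$, since $\Psi_m^2\ne\Psi_m$ on the transition interval; your weaker claim of commutativity, which is all that is needed, is what actually holds.) What the paper's route buys is that it is a byproduct of the general $\ell_p$-sum machinery already in place; what your route buys is a sharper constant and a self-contained argument.
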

\begin{proof}
The proof relies on an enhancement of the proof of Theorem~\ref{thm:complemented}. Pick $R>1$ and let $(\psi_n)_{n\in \Int}$ be the family of $1/R$-Lipschitz maps given by
\[
\psi_n(x)=\max\left\{1-\frac{|x-Rn|}{R},0\right\}, \quad x\in\Rea.
\]
For $n\in\Int$ consider the intervals $I_n=[Rn-R,Rn+R]$ and $J_n=(Rn-R,Rn+R)$. Define $T$ and $S$ as in the proof of Theorem~\ref{thm:complemented} and set
\begin{equation*}
P_m\colon \left( \bigoplus_{n\in\Int}\F_p(\MM^*_{R^{I_n}})\right)_p \to \F_p(\MM),\, (\mu_n)_{n\in\Int} \mapsto \sum_{n=-m}^m L_n(\mu_n), \, m\in\Nat.
\end{equation*}
The sequence of operators $(S_m)_{m=1}^\infty$ defined by $S_m=P_m\circ S\circ T$ satisfies $\sup_m\Vert S_m\Vert \le \Vert T\Vert$. Since $\sum_{n\in\Int} \psi_n =1$, it follows that $\lim_m S_m=S$ in the strong topology of operators. Each $S_m$ has finite rank, and by construction,
\[
S_m(\delta(x)) = \sum_{n=-m}^m \psi_k(\log_R d(0,x))\delta(x),\quad x\in\MM.
\]
We infer that $S_m\circ S_{m'}= S_{m'}\circ S_m=S_{\min\{m,m'\}}$ for all $m$, $m'\in\Nat$. Finally, note that $\Vert T\Vert\le C$, where $C$ is as in \eqref{NormofT} with $k=2$ and $K_1=1/R$ and $K_2=1$. Letting $R$ tend to $\infty$ we obtain the desired estimate for the commuting BAP constant.
\end{proof}

A basic sequence $(x_j)_{j=1}^\infty$ in a quasi-Banach space $X$ is said to be \emph{subsymmetric} if it is unconditional and equivalent to all its subsequences. Every subsymmetric basic sequence is semi-normalized, i.e.,
\[
\inf_{j\in\Nat} \Vert x_j\Vert>0 \text{ and }
\sup_{j\in\Nat} \Vert x_j \Vert<\infty.
\]
For some more details concerning subsymmetric bases we refer the reader to \cite{Singer1970}*{Chapter II, $\S$21} and \cite{Ansorena2018}. We would like to point out that, although the paper \cite{Ansorena2018} is written within the framework of Banach spaces, it can be re-written verbatim for $p$-Banach spaces without altering the validity of the results.

\begin{Lemma}\label{lem:sy}Let $(X_n)_{n=1}^\infty$ be a sequence of finite dimensional quasi-Banach spaces . Every subsymmetric basic sequence $\XX=(x_j)_{j=1}^\infty$ of $X=(\bigoplus_{n=1}^\infty X_n)_p$, $0\le p<\infty$, is equivalent to the unit vector system of $\ell_p$ (with the convention that $\ell_0$ means $c_0$).
\end{Lemma}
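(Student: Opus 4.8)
The plan is to use the subsymmetry to reduce everything to a single conveniently chosen subsequence and to prove that \emph{that} subsequence is equivalent to the unit vector basis of $\ell_p$ (of $c_0$ when $p=0$). Two structural facts will carry the argument. First, a subsymmetric basic sequence is unconditional and semi-normalized, say $0<a\le\|x_j\|\le b<\infty$, with uniform constants; and being basic, its coordinate functionals are bounded, so $\|x_i-x_j\|\ge a/(2K_b)>0$ for $i\ne j$, where $K_b$ is the basis constant. Second, $X=(\bigoplus_n X_n)_p$ carries the canonical unconditional finite-dimensional decomposition $(X_n)$ with norm-one head projections $Q_m$ onto $(\bigoplus_{n\le m}X_n)_p$, and its norm is \emph{exactly} additive over disjointly supported blocks: if $v_1,\dots,v_r$ live in pairwise disjoint sets of coordinates then $\|\sum_i v_i\|^p=\sum_i\|v_i\|^p$ (with the obvious maximum when $p=0$). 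Consequently every semi-normalized block basis of this decomposition is already \emph{isometrically} equivalent to the $\ell_p$ (resp. $c_0$) basis, so it will suffice to exhibit a subsequence of $(x_j)$ that is a small perturbation of such a block basis, up to one harmless extra vector.

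Next I would extract a coordinatewise limit. Since each $X_n$ is finite-dimensional and the sequence is bounded, a diagonal argument produces a subsequence with $x_j\to y$ coordinatewise; for $p>0$ a Fatou estimate on $\sum_n\|\cdot\|^p$ shows $y\in X$. Setting $g_j=x_j-y$, the differences satisfy $Q_m g_j\to 0$ in norm for every fixed $m$ (finite-dimensionality turns coordinatewise into norm convergence there), while $\liminf_j\|g_j\|>0$ by the lower bound above (otherwise $x_j$ would converge, contradicting basicness) and $\sup_j\|g_j\|<\infty$. A standard gliding-hump then furnishes a further subsequence along which $(g_{j_k})$ is an arbitrarily small perturbation of a semi-normalized block basis $(u_k)$, supported in successive windows $(m_{k-1},m_k]$ lying beyond a level $m_0$ chosen so large that the tail $\|y-Q_{m_0}y\|$ is negligible. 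Thus $(g_{j_k})$ is equivalent to the $\ell_p$ (resp. $c_0$) basis, and $y$ is, up to arbitrarily small error, disjointly supported from every $u_k$.

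Finally I would assemble the two one-sided estimates from $\sum_k a_k x_{j_k}=(\sum_k a_k)\,y+\sum_k a_k g_{j_k}$. Near-disjointness yields $\|\sum_k a_k x_{j_k}\|^p\approx |\sum_k a_k|^p\,\|y\|^p+\sum_k|a_k|^p\|u_k\|^p$ (and the corresponding maximum for $p=0$). Projecting onto the high blocks $\bigcup_k\supp u_k$ isolates $\sum_k a_k u_k$ and gives the lower estimate $\|\sum_k a_k x_{j_k}\|\gtrsim\|(a_k)\|_{\ell_p}$ with no hypothesis on $y$. For the upper estimate the only dangerous term is $|\sum_k a_k|\,\|y\|$: when $0<p\le 1$ it is harmless because $\|(a_k)\|_{\ell_1}\le\|(a_k)\|_{\ell_p}$; in the remaining cases $p=0$ and $p>1$, unconditionality forces $y=0$, since otherwise the all-ones combination (of norm $\sim N\|y\|$) and an alternating one (of norm $\sim N^{1/p}=o(N)$ for $p>1$, and bounded for $c_0$) would violate the uniform unconditional constant. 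Either way $\|\sum_k a_k x_{j_k}\|\lesssim\|(a_k)\|_{\ell_p}$, so $(x_{j_k})$ is equivalent to the $\ell_p$ (resp. $c_0$) basis, and subsymmetry transfers this back to $(x_j)$.

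I expect the crux to be the treatment of the coordinatewise limit $y$. The naive instinct, namely to prove $Q_m x_j\to 0$ and disjointify $(x_j)$ directly, fails precisely in the range $0<p\le 1$: already in $\ell_1$ the sequence $x_j=e_1+e_{n_j}$ with $n_j\to\infty$ is subsymmetric yet has $Q_1 x_j=e_1\not\to 0$. One must therefore carry $y$ through the whole argument and observe that it is absorbed by the $\ell_p$ norm when $p\le 1$ and is forced to vanish otherwise; this is exactly where the position of $p$ relative to $1$ enters. The remaining care is routine but genuinely needed in this non-locally-convex, variable-$p$ setting: justifying the Fatou step and the membership $y\in X$ (and its avoidance in the $c_0$ case), and keeping the gliding-hump perturbation constants uniform so that the block-basis equivalence survives passage through the quasi-norm.
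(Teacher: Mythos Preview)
Your argument is correct, but the paper's proof is considerably shorter because it avoids the coordinatewise limit $y$ altogether. After the same diagonal step, the paper passes to the \emph{difference} sequence $y_j=x_{\phi(2j-1)}-x_{\phi(2j)}$; subsymmetry (unconditionality together with equivalence to subsequences) immediately gives that $(y_j)$ is equivalent to $(x_j)$, while by construction $y_j(n)\to 0$ for every $n$. This single move kills the limit uniformly in $p$: one can then apply gliding-hump directly to $(y_j)$ and land on a semi-normalized disjointly supported sequence, which is trivially equivalent to the $\ell_p$ (resp.\ $c_0$) basis.

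By contrast, you subtract the limit $y$ and must then carry the extra term $(\sum_k a_k)\,y$ through the estimates, splitting into the cases $0<p\le 1$ (where $|\sum a_k|^p\le\sum|a_k|^p$ absorbs it) and $p>1$ or $p=0$ (where an unconditionality-versus-growth argument forces $y=0$). This works, and your treatment of the delicate points (membership of $y$ in $X$ via Fatou when $p>0$, the $c_0$ case, the lower bound via projection onto the high blocks) is sound. The price is the case analysis and the need to justify $y\in X$, both of which the paper's difference trick sidesteps entirely; the benefit of your route is perhaps that it makes explicit where the threshold $p=1$ enters, which the paper's uniform argument hides.
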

\begin{proof}For $x\in X$, write $x=(x(n))_{n=1}^\infty$ and define
\[
\supp(x)=\{n\in\Nat \colon x(n)\not=0\}.
\]
We also define, for $N\in\Nat$, $S_N(x)\in X$ by $S_N(x)(n)=x(n)$ for $n\le N$ and
$S_N(x)(n)=0$ for $n>N$. Note that $\lim_N S_N(x)=x$.
Since $\XX$ is semi-normalized, $\sup_{j} \Vert x_j(n) \Vert<\infty$ for all $n\in\Nat$. Since $B_{X_n}$ is compact, a diagonal argument yields $\phi\colon\Nat\to\Nat$ increasing such that $(x_{\phi(j)}(n))_{j=1}^\infty$ converges for every $n\in\Nat$. Then, if
\[
y_j=x_{\phi(2j-1)}-x_{\phi(2j)},
\]
$\lim_j y_j(n)=0$ for all $n\in\Nat$. By subsymmetry, $\YY=(y_j)_{j=1}^\infty$ is equivalent to $\XX$ and so it is subsymmetric. In particular, $\YY$ is semi-normalized.

By the gliding-hump technique and the principle of small perturbations (see, e.g., the proof of \cite{AlbiacKalton2016}*{Theorem 1.3.10}), there is $\psi\colon\Nat\to\Nat$ such that $(y_{\psi(j)})_{j=1}^\infty$ is equivalent to a sequence $\ZZ=(z_j)_{j=1}^\infty$ with $(\supp(z_j))_{j=1}^\infty$ pairwise disjoint. As before, we infer that $\ZZ$ is semi-normalized. Now, it is easy to see that $\ZZ$ is equivalent to the unit vector system of $\ell_p$. By subsymmetry so is $\XX$.
\end{proof}

The following proposition applies in particular to the spaces $\F_p(\Nat^d)$ and $\F_p(\Int^d)$.

\begin{Proposition}\label{prop:FpSubsym}Let $\MM$ be a metric space. Suppose that there is a point $x\in\MM$ such that every annulus centered at $x$ contains only finitely many points. Then every subsymmetric basic sequence in $\F_p(\MM)$, $0<p\le 1$, is equivalent to the unit vector system of $\ell_p$. Consequently, $\F_p(\MM)$ admits a subsymmetric basis if and only if $\F_p(\MM)\simeq \ell_p$.
\end{Proposition}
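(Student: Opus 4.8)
The plan is to obtain both statements as a straightforward combination of Proposition~\ref{prop:finiteCoronas} and Lemma~\ref{lem:sy}. The former tells us that $\F_p(\MM)$ embeds complementably into a $p$-sum $(\bigoplus_{n\in N}X_n)_p$ of finite-dimensional spaces, while the latter pins down exactly what the subsymmetric basic sequences of such a sum look like. Transporting a subsymmetric basic sequence of $\F_p(\MM)$ into the sum and back is what makes everything work.

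First I would dispose of the trivial case: if $\F_p(\MM)$ is finite-dimensional there are no infinite basic sequences, so the first assertion is vacuous and I may assume $\F_p(\MM)$ is infinite-dimensional. Applying Proposition~\ref{prop:finiteCoronas}, I obtain a countable family $(X_n)_{n\in N}$ of finite-dimensional $p$-Banach spaces together with bounded linear maps $S\colon\F_p(\MM)\to X$ and $T\colon X\to\F_p(\MM)$, where $X:=(\bigoplus_{n\in N}X_n)_p$, such that $T\circ S=\Id_{\F_p(\MM)}$; infinite-dimensionality forces $N$ to be infinite, so after relabelling I may take $N=\Nat$ and apply Lemma~\ref{lem:sy} to $X$. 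Now, given a subsymmetric basic sequence $\XX=(x_j)_{j=1}^\infty$ in $\F_p(\MM)$, I note that $S$ is an isomorphism onto its image (with left inverse $T$), so $(S(x_j))_{j=1}^\infty$ is equivalent to $\XX$; being an isomorphic invariant, subsymmetry passes to $(S(x_j))_{j=1}^\infty$, which Lemma~\ref{lem:sy} then identifies with the unit vector system of $\ell_p$. Pulling this equivalence back through $T$ shows that $\XX$ itself is equivalent to the unit vector system of $\ell_p$, which settles the first assertion.

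The second assertion follows at once. If $\F_p(\MM)\simeq\ell_p$, then the image of the canonical unit vector basis of $\ell_p$ under the isomorphism is a subsymmetric basis of $\F_p(\MM)$. Conversely, a subsymmetric basis of $\F_p(\MM)$ is in particular a subsymmetric basic sequence, hence equivalent to the unit vector system of $\ell_p$ by the first part; since it spans the whole space and the unit vectors span $\ell_p$, this equivalence upgrades to an isomorphism $\F_p(\MM)\simeq\ell_p$. The only genuinely delicate points are the routine verification that subsymmetry (unconditionality together with equivalence to every subsequence) is preserved under an isomorphism onto a subspace, and the reindexing of the countable family to $\Nat$; neither poses a real obstacle, so I expect the whole argument to be short.
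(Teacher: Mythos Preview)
Your proposal is correct and follows exactly the paper's approach: the paper's proof is the single line ``Just combine Proposition~\ref{prop:finiteCoronas} with Lemma~\ref{lem:sy},'' and you have simply written out the details of that combination. The only additions you make (disposing of the finite-dimensional case, reindexing $N$ by $\Nat$, and noting that subsymmetry is preserved under isomorphic embeddings) are routine and implicit in the paper's one-line argument.
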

\begin{proof}Just combine Proposition~\ref{prop:finiteCoronas} with Lemma~\ref{lem:sy}.
\end{proof}

\subsection{Applications to Banach spaces}\label{subsec:banach}
Recall that a metric space $(\MM,d)$ is self-similar with constant $R>1$ if there is a bijection $f\colon\MM\to \MM$ with
\begin{equation}\label{eq:Rclosed}
d(f(x),f(y)) = R d(x,y), \quad x,y\in\MM.
\end{equation}
By Banach's contraction principle, if $\MM$ is complete there is always a fixed point of $f$, and such a fixed point is unique. So, when dealing with self-similar pointed metric spaces we will always assume that the base point $0$ satisfies $f(0)=0$ for the bijection $f$ fulfilling \eqref{eq:Rclosed}.

Obvious examples of self-similar spaces are, of course, quasi-Banach spaces and their subsets closed under multiplication by $R$ and $1/R$ for some $R\in\Rea\setminus\{0,\pm 1\}$. Other examples are Carnot groups (see e.g. \cite{LeD}).

Given a pointed metric space $(\MM,d,0)$ and $c>0$, $B_\MM(c)$ will denote the ball centered at the base point $0$ of radius $c$, i.e.,
\[
B_\MM(c)=\{x\in \MM \colon d(0,x)\le c\}.\] The unit ball $B_\MM(1)$ will be denoted by $B_\MM$.

\begin{Theorem}\label{thm:sumsIsoSelfSimilar}Let $(\MM,d,0)$ be a pointed self-similar metric space with constant $R>1$. Let $0<c_1<c_2<\infty$ with $c_2/c_1>R$. Suppose that there is $\mu>1$ such that
$\MM_{(c_1,c_2]}$ is complementably $p$-amenable in $\MM_{(\mu^{-1}c_1,\mu c_2]}$ for some $0<p\le 1$. Then
\[
\F_p(\MM)\simeq \ell_p\left(\F_p(\MM_{(c_1,c_2]})\right).
\]
Moreover, if there is $c_1<c_3<c_2$ such that $\F_p(\MM_{(c_1,c_3]})$ (respectively
$\F_p(\MM_{(c_3,c_2]})$ is complemented in $\F_p(\MM_{(c_1,c_2]})$, then
\[
\F_p(\MM) \simeq \F_p(\MM\setminus B_\MM(c_1)) \quad \text{(resp.\,} \F_p(\MM)\simeq \F_p(B_\MM(c_2)\text{ )}.
\]
\end{Theorem}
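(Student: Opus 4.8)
The plan is to derive everything from the general machinery of Section~\ref{sect:generalEllPSum}, specifically Theorem~\ref{thm:sumisomorphimGeneralized:2}, by exploiting the self-similarity to manufacture the uniform complementation hypotheses that theorem requires. First I would fix the base point $0$ to be the fixed point of the self-similarity map $f$ (guaranteed to exist when $\MM$ is complete; in general we simply assume $0=f(0)$), and set $R$ equal to the self-similarity constant. The key observation is that the bijection $f$ scales distances by $R$, so for $A_n=(c_1R^n,c_2R^n]$ the restriction of $f^{\,n}$ maps $\MM_{A_0}=\MM_{(c_1,c_2]}$ isometrically (up to the scaling factor) onto $\MM_{A_n}$. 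Hence the induced canonical linearizations give $\F_p(\MM_{A_n})\simeq_{K_1}\F_p(\MM_{A_0})$ with $K_1=1$ for \emph{all} $n\in\Int$, verifying the first bullet of Theorem~\ref{thm:sumisomorphimGeneralized:2}. Here one must check that $c_2/c_1>R$ forces $\log_R(c_2/c_1)>1$, so we may take any $\lambda\in(1,\log_R(c_2/c_1))$.

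Second, I would transport the complementable $p$-amenability hypothesis across all scales. By assumption $\MM_{(c_1,c_2]}=\MM_{A_0}$ is complementably $p$-amenable in $\MM_{(\mu^{-1}c_1,\mu c_2]}=\MM_{B_0}$ with some constant $K_2$, where $B_n=(c_1\mu^{-1}R^n,c_2\mu R^n)$. Applying the self-similarity $f^{\,n}$ (which carries the pair $(\MM_{A_0},\MM_{B_0})$ onto $(\MM_{A_n},\MM_{B_n})$ via a Lipschitz isomorphism whose constants are scale-invariant, since multiplying all distances by $R^n$ does not change the amenability constant), I obtain that $\MM_{A_n}$ is complementably $p$-amenable in $\MM_{B_n}$ with the \emph{same} constant $K_2$ for all $n\in\Int$. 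This is the second bullet of Theorem~\ref{thm:sumisomorphimGeneralized:2}. With both bullets in hand, that theorem yields the constant $C$ and the chain
\[
\F_p(\MM)\simeq_C \ell_p(\F_p(\MM))\simeq_C \ell_p(\F_p(\MM_{(c_1,c_2]})),
\]
which is exactly the first assertion of the present theorem (discarding the quantitative constant, which we do not track in the statement).

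For the ``moreover'' part I would use the additional one-sided complementation together with Lemma~\ref{lem:InfiniteSums} and Pe\l czy\'nski-style decomposition. Suppose $\F_p(\MM_{(c_1,c_3]})$ is complemented in $\F_p(\MM_{(c_1,c_2]})$. The self-similar structure lets me tile the ``outer'' region $\MM\setminus B_\MM(c_1)$ by the annuli $\MM_{A_n}$ for $n\ge 0$ together with the transitional piece, and an application of Theorem~\ref{thm:sumisomorphimGeneralized} (the unbounded-above variant) gives $\F_p(\MM\setminus B_\MM(c_1))\simeq \ell_p(\F_p(\MM_{(c_1,c_2]}))$; combining with the first part identifies it with $\F_p(\MM)$. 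The symmetric statement for $B_\MM(c_2)$ follows from Theorem~\ref{thm:sumisomorphimGeneralized:1} (the bounded variant, using the inner complementation of $\F_p(\MM_{(c_3,c_2]})$) in the same way. The main obstacle I anticipate is purely bookkeeping: verifying that the self-similarity map genuinely produces \emph{uniform} complementation and amenability constants across all integer scales—in particular that the complementing projections transported by $f^{\,n}$ retain a common bound independent of $n$—and matching the interval parameters $c_1,c_2,c_3,\lambda,\mu$ to the precise hypotheses of the three isomorphism theorems so that they apply verbatim.
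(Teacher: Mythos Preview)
Your proposal is correct and follows essentially the same route as the paper: use self-similarity to see that all the annuli $\MM_{A_n}$ are $1$-Lipschitz isomorphic (hence $\F_p(\MM_{A_n})$ are mutually isometric) and that the complementable $p$-amenability constant transports uniformly across scales, then invoke Theorem~\ref{thm:sumisomorphimGeneralized:2} for the first assertion and Theorems~\ref{thm:sumisomorphimGeneralized} and~\ref{thm:sumisomorphimGeneralized:1} for the two halves of the ``moreover'' clause. The paper's own proof is in fact terser than yours---it simply records the two self-similarity consequences and then cites the three theorems simultaneously---so your extra mentions of Lemma~\ref{lem:InfiniteSums}, Pe\l czy\'nski decomposition, and a ``transitional piece'' are unnecessary (those ingredients are already absorbed into the cited theorems, and the overlapping annuli $A_n$ cover $(c_1,\infty)$ without any leftover piece).
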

\begin{proof}
Since $\MM$ is self-similar with constant $R$, we easily obtain that $\MM_{(R^n,tR^n]}$ is $1$-Lipschitz isomorphic to $\MM_{(1,t]}$ for every $t>0$ and every $n\in\Int$.
Thus, all the spaces $\F_p(\MM_{(c_1R^{n}, c_2R^{n}]})$ are isometric to $\F_p(\MM_{(c_1,c_2]})$ for $n\in\Int$.

We also infer from self-similarity and our assumption that there is a constant $K$ such that $\MM_{(c_1R^{n},c_2R^{n}]}$ is complementably $p$-amenable in $\MM_{(\mu^{-1}c_1R^{n}, \mu c_2R^{n})}$ with constant $K$ for all $n\in\Int$. Hence, applying Theorems~\ref{thm:sumisomorphimGeneralized}, \ref{thm:sumisomorphimGeneralized:1} and \ref{thm:sumisomorphimGeneralized:2} gives the desired results.
\end{proof}

\begin{Remark}
Analogous results to Theorem~\ref{thm:sumsIsoSelfSimilar} hold replacing closed intervals with open, or closed, or left-open and right-closed intervals. We leave the details to the reader.
\end{Remark}

Given a metric space $(\MM,d)$ and $\alpha\in(0,1]$, its snow-flaking $(\MM,d^\alpha)$ is also a metric space. Obviously a function $f\colon\MM\to\NN$ is $C$-Lipschitz when regarded from the metric space $(\MM,d_\MM)$ into the metric space $(\NN,d_\NN)$ if and only if it is $C^\alpha$-Lipschitz when regarded from $(\MM,d_\MM^\alpha)$ into $(\NN,d_\NN^\alpha)$.

Given a Banach space $(X,\|\cdot\|)$, the assumptions of Theorem~\ref{thm:sumsIsoSelfSimilar} are satisfied for the metric space $(X,\|\cdot\|^\alpha)$, where $(X,\|\cdot\|^\alpha)$ is the snow-flaking of $(X,\|\cdot\|)$. This will be generalized in the following two slightly more general results, Lemmas~\ref{lem:gen2} and~\ref{lem:gen1}. Those will be applied below only to Banach spaces and their subsets, in which case the mapping $\sigma$ from the assumptions is given by $\sigma(x,t):=tx$. We refer to Remark~\ref{rem:mogeGeneralSituations} below where possible applications to more general structures are mentioned.

\begin{Definition}\label{def:geodesic}
Let $(\MM,d,0)$ be a pointed metric space and set $A=[0,1]$ (respectively $A=\{0\}\cup[1,\infty)$). A map
\[
\sigma\colon\MM\times A \to \MM
\]is said to be a \emph{self-similar contraction} (resp. \emph{dilation}) if it satisfies the following conditions:
\begin{enumerate}[label={(G.\arabic*)}]
\item\label{it:basic2} $\forall x\in \MM:\quad \sigma_x(0)=0$ and $\sigma_x(1)=x$,
\item\label{it:geodesic2} $\forall x\in \MM\; \forall t,s\in A:\quad d(\sigma_x(t),\sigma_x(s))\leq |s-t|d(x,0)$,
\item\label{it:selfsimilar2} $\forall x,y\in \MM\; \forall t\in A:\quad d(\sigma_x(t),\sigma_y(t))\leq td(x,y)$,
\end{enumerate}
where for every $x\in\MM$ we denote by $\sigma_x$ the mapping $t\mapsto \sigma(x,t)$.
\end{Definition}

Recall that a metric space $(\MM,d,0)$ is said to be \emph{geodesic} if for all $x$, $y\in\MM$ there is $\sigma\colon[0,1]\to\MM$ such that
$\sigma(0)=x$, $\sigma(1)=y$ and $d(\sigma(t),\sigma(s))=d(x,y)|t-s|$ for all $s$, $t\in[0,1]$.

\begin{Lemma}\label{lem:gen2} Let $(\MM,d,0)$ be a geodesic pointed metric space with a self-similar contraction $\sigma$. Let $S>0$ and $\NN\subset\MM$ be such that for every $x\in \NN\setminus B_\MM(S)$ we have $\sigma_x(\tfrac{S}{d(x,0)})\in \NN$.
Then $\NN_{[0,S]}$ is a $2$-Lipschitz retract of $\NN$. More precisely, the $2$-Lipschitz retraction $r\colon\NN\to \NN_{[0,S]}$ is given by $r(0)=0$ and
\begin{equation}\label{eq:retr2}
r(x):=\sigma_x\left(\min\left\{1,\tfrac{S}{d(x,0)}\right\}\right),\qquad x\in\NN\setminus\{0\}.
\end{equation}
\end{Lemma}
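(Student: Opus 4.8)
The plan is to verify directly, using only the axioms \ref{it:basic2}--\ref{it:selfsimilar2} of the self-similar contraction, that the explicit map $r$ is a well-defined retraction of $\NN$ onto $\NN_{[0,S]}$ with $\|r\|_\Lip\le 2$; the geodesic hypothesis plays no role in the argument. Write $t_x=\min\{1,S/d(x,0)\}$ for $x\in\NN\setminus\{0\}$, so that $r(x)=\sigma_x(t_x)$ with $t_x\in[0,1]$. First I would check $r(\NN)\subseteq\NN_{[0,S]}$. If $d(x,0)\le S$ then $t_x=1$, so $r(x)=\sigma_x(1)=x$ by \ref{it:basic2}; this simultaneously shows $r$ maps $\NN_{[0,S]}$ into itself and fixes it pointwise, i.e.\ $r$ is a retraction. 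If instead $x\in\NN\setminus B_\MM(S)$ then $t_x=S/d(x,0)$ and $r(x)\in\NN$ by the standing hypothesis, while \ref{it:basic2} and \ref{it:geodesic2} give $d(0,r(x))=d(\sigma_x(0),\sigma_x(t_x))\le t_x\,d(x,0)=S$, so again $r(x)\in\NN_{[0,S]}$.

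The heart of the matter is the estimate $d(r(x),r(y))\le 2\,d(x,y)$. By symmetry I may assume $d(x,0)\ge d(y,0)$. The case $y=0$ is immediate: axiom \ref{it:geodesic2} forces $\sigma_0\equiv 0$, hence $r(0)=0=\sigma_x(0)$ and $d(r(x),0)=d(\sigma_x(t_x),\sigma_x(0))\le t_x\,d(x,0)=\min\{d(x,0),S\}\le d(x,y)$. For $x,y\ne 0$ I would insert $\sigma_x(t_y)$ and split
\[
d(r(x),r(y))\le d(\sigma_x(t_x),\sigma_x(t_y))+d(\sigma_x(t_y),\sigma_y(t_y)),
\]
bounding the second summand immediately by \ref{it:selfsimilar2}, namely $d(\sigma_x(t_y),\sigma_y(t_y))\le t_y\,d(x,y)\le d(x,y)$, and the first by \ref{it:geodesic2}, namely $d(\sigma_x(t_x),\sigma_x(t_y))\le |t_x-t_y|\,d(x,0)$.

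Thus everything reduces to showing $|t_x-t_y|\,d(x,0)\le d(x,y)$. Since $\rho\mapsto\min\{1,S/\rho\}$ is non-increasing and $d(x,0)\ge d(y,0)$, we have $t_x\le t_y$ and $|t_x-t_y|\,d(x,0)=(t_y-t_x)\,d(x,0)$, which I would estimate in three cases according to where $x,y$ sit relative to the sphere of radius $S$: the term vanishes when both points lie in $B_\MM(S)$; it equals $d(x,0)-S\le d(x,0)-d(y,0)$ when $d(y,0)\le S<d(x,0)$ (using $d(y,0)\le S$); and it equals $(S/d(y,0))\,(d(x,0)-d(y,0))\le d(x,0)-d(y,0)$ when $S<d(y,0)\le d(x,0)$ (using $S/d(y,0)<1$). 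In each case the reverse triangle inequality $d(x,0)-d(y,0)\le d(x,y)$ finishes the bound, and combining with the previous paragraph yields $d(r(x),r(y))\le 2\,d(x,y)$. The only real obstacle is this bookkeeping: the decomposition must be taken through the \emph{farther} point $x$, and the radii ordered so that the monotonicity of $t$ together with the elementary bounds $S/d(y,0)<1$ (outer case) and $S\ge d(y,0)$ (mixed case) make the three subcases collapse to the single inequality $d(x,0)-d(y,0)\le d(x,y)$.
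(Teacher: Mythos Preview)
Your proof is correct and, notably, dispenses with the geodesic hypothesis that the paper uses. The paper's argument also splits into cases according to the positions of $x,y$ relative to the sphere of radius $S$, but in the mixed case $d(x,0)<S<d(y,0)$ it invokes geodesicity of $\MM$ to pick a point $z$ on a geodesic from $x$ to $y$ with $d(z,0)=S$, thereby reducing to the two pure cases. Your decomposition avoids this detour: by inserting $\sigma_x(t_y)$ (with $x$ the \emph{farther} point, so that both $t_x$ and $t_y$ lie in $[0,1]$ and axioms \ref{it:geodesic2}--\ref{it:selfsimilar2} apply), the mixed case collapses to the elementary inequality $d(x,0)-S\le d(x,0)-d(y,0)\le d(x,y)$. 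Thus your argument is both shorter and strictly more general; it shows the lemma holds for any pointed metric space admitting a self-similar contraction, geodesic or not.
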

\begin{proof}
It suffices to show that the mapping $r$ given by \eqref{eq:retr2} is $2$-Lipschitz. Pick $x,y\in\NN$ and assume without loss of generality that $d(x,0)\leq d(y,0)$. If $d(y,0)\leq S$ then obviously $d(r(x),r(y)) = d(x,y)$. If $d(x,0)\geq S$ then we have
\begin{align*}
d(r(x),r(y))
& \leq d\left(\sigma_x\big(\tfrac{S}{d(x,0)}\big), \sigma_y\big(\tfrac{S}{d(x,0)}\big)\right) + d\left(\sigma_y\big(\tfrac{S}{d(x,0)}\big), \sigma_y\big(\tfrac{S}{d(y,0)}\big)\right)\\
&\leq \frac{S}{d(x,0)}\big(d(x,y) + |d(y,0) - d(x,0)|\big)\\&\leq 2d(x,y),
\end{align*}
where in the second inequality we used the conditions \ref{it:basic2}, \ref{it:geodesic2} and \ref{it:selfsimilar2} from Definition~\ref{def:geodesic}. Finally, if $d(x,0)<S<d(y,0)$, since $\MM$ is geodesic we find $z\in \MM$ such that $d(z,0)=S$ and $d(x,y) = d(x,z)+ d(z,y)$, which implies that
\begin{align*}
d(r(x),r(y))&\leq d(r(x),r(z)) + d(r(z),r(y))\\
&\leq 2(d(x,z) + d(z,y))\\
& = 2d(x,y).
\qedhere
\end{align*}
\end{proof}

\begin{Lemma}\label{lem:gen1} Let $(\MM,d,0)$ be a geodesic pointed metric space with a self-similar dilation $\sigma$. Let $S>0$ and $\NN\subset\MM$ be such that $0\notin\NN$ and for every $x\in \NN\cap B_\MM(S)$ we have $\sigma_x(\tfrac{S}{d(x,0)})\in \NN$. Then, for every $\alpha\in(0,1]$ and each $0<p\le 1$, $(\NN_{[S,\infty)},d^\alpha)$ is complementably $p$-amenable in $(\NN,d^\alpha)$ with constant $3^{1/p}$.
\end{Lemma}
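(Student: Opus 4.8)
The plan is to realize the complementation by exhibiting an explicit Lipschitz map into the free space and invoking Lemma~\ref{lem:complementedCondition}. Since $0\notin\NN$, I read $\F_p(\NN)$ and $\F_p(\NN_{[S,\infty)})$ as the free $p$-spaces over $\NN\cup\{0\}$ and $\NN_{[S,\infty)}\cup\{0\}$ with base point $0$, so that Lemma~\ref{lem:complementedCondition} applies to the pointed space $(\NN\cup\{0\},d^\alpha,0)$ and its subset $\NN_{[S,\infty)}\cup\{0\}$. It then suffices to build a $3^{1/p}$-Lipschitz map $f\colon\NN\cup\{0\}\to\F_p(\NN_{[S,\infty)})$ with $f\equiv\delta$ on $\NN_{[S,\infty)}\cup\{0\}$.

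I would define $f(0)=0$, set $f(x)=\delta(x)$ when $d(0,x)\ge S$, and, for $0<d(0,x)<S$, push $x$ outward along the dilation to the point $y_x:=\sigma_x\big(\tfrac{S}{d(0,x)}\big)$ and put $f(x)=\tfrac{d^\alpha(0,x)}{S^\alpha}\,\delta(y_x)$. Using \ref{it:basic2}--\ref{it:selfsimilar2}, the dilation carries $x$ to distance $S$ from $0$, so $y_x\in\NN_{[S,\infty)}$ (it lies in $\NN$ by hypothesis) and $f$ is well defined; the weight is chosen so that $\|f(x)\|=d^\alpha(0,x)=\|\delta(x)\|$, and at $d(0,x)=S$ one has $y_x=x$ and weight $1$, so the two formulas match and $f=\delta$ on $\NN_{[S,\infty)}$.

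For the Lipschitz estimate I would split into three cases for $x,y$ with $a:=d(0,x)\le b:=d(0,y)$. When both are $\ge S$ the bound is immediate. The remaining cases use $p$-subadditivity to break $f(x)-f(y)$ into (at most) two pieces and rely on the two elementary inequalities $b^\alpha-a^\alpha\le(b-a)^\alpha$ and, crucially, $(S-a)b\le S(b-a)$ valid for $0\le a\le S\le b$. In the both-close case I would write $f(x)-f(y)=\tfrac{a^\alpha}{S^\alpha}(\delta(y_x)-\delta(y_y))+\tfrac{a^\alpha-b^\alpha}{S^\alpha}\delta(y_y)$ and bound $d(y_x,y_y)\le 2(S/a)\,d(x,y)$ via \ref{it:geodesic2} and \ref{it:selfsimilar2} together with $b-a\le d(x,y)$; since $\|\delta(y_y)\|=S^\alpha$, each term is at most $d^{\alpha p}(x,y)$ up to the factor $2^{\alpha p}\le 2$, giving $\|f(x)-f(y)\|^p\le 3\,d^{\alpha p}(x,y)$.

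The genuinely delicate case, which I expect to be the main obstacle, is $x$ close and $y$ far ($a<S\le b$), where the mismatch between the scaled mass $\tfrac{a^\alpha}{S^\alpha}\delta(y_x)$ and the full mass $\delta(y)$ threatens to blow up as $b\to\infty$. Here I would split $f(x)-f(y)=\tfrac{a^\alpha}{S^\alpha}(\delta(y_x)-\delta(y))+\big(\tfrac{a^\alpha}{S^\alpha}-1\big)\delta(y)$; the first term is controlled by $d(y_x,y)\le(S-a)+d(x,y)\le 2d(x,y)$, using $d(y_x,x)\le S-a\le b-a\le d(x,y)$ from \ref{it:geodesic2}, while the second equals $\big(\tfrac{S^\alpha-a^\alpha}{S^\alpha}\big)^p b^{\alpha p}$, which the inequality $(S-a)b\le S(b-a)$ bounds by $(b-a)^{\alpha p}\le d^{\alpha p}(x,y)$. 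Summing again yields $\|f(x)-f(y)\|^p\le 3\,d^{\alpha p}(x,y)$, i.e.\ $f$ is $3^{1/p}$-Lipschitz, and Lemma~\ref{lem:complementedCondition} finishes the proof. Beyond this estimate, the only points requiring care are the bookkeeping around the base point $0\notin\NN$ and the verification that $y_x$ lands at distance exactly $S$, which is automatic in the intended applications where $\sigma(x,t)=tx$.
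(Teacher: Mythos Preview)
Your argument is correct and follows essentially the same route as the paper: you define exactly the same map $f$ (the paper calls it $r$), and verify the $3^{1/p}$-Lipschitz bound by the same three-way case split, using $p$-subadditivity and the elementary inequalities $b^\alpha-a^\alpha\le(b-a)^\alpha$ and $(S-a)b\le S(b-a)$. The only cosmetic difference is that the paper inserts an extra intermediate point $\sigma_y(S/d(x,0))$ and splits into three terms rather than two; your two-term decomposition is slightly cleaner since it avoids auxiliary points that need not lie in $\NN_{[S,\infty)}$, and you rightly flag the side condition that $y_x$ lands at distance $S$ from $0$ (implicit in the paper and automatic in the intended applications).
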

\begin{proof}
Consider the mapping $r\colon(\NN,d^\alpha)\to \F_p(\NN_{[S,\infty)},d^\alpha)$ defined by
\[
r(x):=\begin{cases}\delta(x) & \quad d(x,0)\geq S\\
{\displaystyle \frac{d^\alpha(x,0)}{S^{\alpha}}\delta\left(\sigma_x\left(\tfrac{S}{d(x,0)}\right)\right)} & \quad d(x,0)\leq S.
\end{cases}
\]
It suffices to show that $r$ is $3^{1/p}$-Lipschitz. Pick $x,y\in\NN$ and assume without loss of generality that $d(x,0)\leq d(y,0)$.
If $d(x,0)\geq S$ then obviously $\|r(x)-r(y)\| = d^\alpha(x,y)$.

If $d(y,0)\leq S$ we have
\[
\|r(x)-r(y)\|^p\le A_1+A_2+A_3,
\] where
\begin{align*}
A_1&=\frac{d^{\alpha p}(x,0)}{S^{\alpha p}}\left\Vert\delta\left(\sigma_x\left(\tfrac{S}{d(x,0)}\right)\right) - \delta\left(\sigma_y\left(\tfrac{S}{d(x,0)}\right)\right)\right\Vert^p\\
A_2&=\frac{d^{\alpha p}(x,0)}{S^{\alpha p}}\left\Vert \delta\left(\sigma_y\left(\tfrac{S}{d(x,0)}\right)\right) - \delta\left(\sigma_y\left(\tfrac{S}{d(y,0)}\right)\right)\right\Vert^p,\text{ and}\\
A_3&=\left\Vert\left(\frac{d^{\alpha}(x,0)}{S^{\alpha}}-\frac{d^{\alpha}(y,0)}{S^{\alpha}}\right)\delta\left(\sigma_y\left(\tfrac{S}{d(y,0)}\right)\right)\right\Vert^p.
\end{align*}
If $d(x,0)<S<d(y,0)$ we have
\[\|r(x)-r(y)\|^p\le A_1+A_4+A_5,\]
where
\begin{align*}
A_4&= \frac{d^{\alpha p}(x,0)}{S^{\alpha p}}\left\Vert\delta\left(\sigma_y\left(\tfrac{S}{d(x,0)}\right)\right) - \delta(y)\right\Vert^p \text{ and}\\
A_5&=\left\Vert\left(\frac{d^{\alpha}(x,0)}{S^{\alpha}}-1\right)\delta(y)\right\Vert^p.
\end{align*}

Applying \ref{it:basic2}, \ref{it:geodesic2}, \ref{it:selfsimilar2} in Definition~\ref{def:geodesic} yields
\begin{align*}
A_1&\le d^{\alpha p}(x,y),\\
\max\{A_2^{1/p}, A_3^{1/p} \} &\le d^{\alpha }(y,0) - d^{\alpha }(x,0), \text{ and }\\
\max\{A_4^{1/p}, A_5^{1/p}\}
&\le B:= \frac{S^{\alpha} - d^{\alpha}(x,0)}{S^{\alpha}}d^{\alpha}(y,0).
\end{align*}
Moreover, if $S<d(y,0)$,
\[
B=d^{\alpha}(y,0) - \frac{d^{\alpha}(x,0) d^{\alpha}(y,0)}{S^\alpha}\le d^{\alpha }(y,0) - d^{\alpha }(x,0).
\]

Summing up, whenever $d(x,0) < S$ we have
\[
\|r(x)-r(y)\|^p \le d^{\alpha p }(x,y) + 2 \left(d^{\alpha }(y,0) - d^{\alpha }(x,0)\right)^p\le 3 d^{\alpha p }(x,y).\qedhere
\]
\end{proof}
\begin{Remark}\label{rem:mogeGeneralSituations}Even though we will apply Lemmas~\ref{lem:gen2} and ~\ref{lem:gen1} only to Banach spaces and their subsets, it is worth it mentioning that there are more general situations to which they might be applied. The assumptions of Lemma~\ref{lem:gen2} are satisfied for metric spaces with \emph{conical geodesic bicombing} (such convex sets in Banach spaces, $CAT(0)$ spaces, or Busemann spaces), see \cite{DL15}. Actually the assumptions of Lemma~\ref{lem:gen2} are motivated by the notion of conical geodesic bicombing.

Moreover, there is also a class of self-similar spaces satisfying both Lemma~\ref{lem:gen2} and Lemma~\ref{lem:gen1} which extend their applicability beyond the class of Banach spaces. Those are \emph{first layers $V_1(G)$ of metric scalable groups $G$} (in particular, first layers of Carnot groups); see \cite{DLM20} for the corresponding definitions. The fact that any $V_1(G)$ as above satisfies Lemma~\ref{lem:gen2} and Lemma~\ref{lem:gen1} easily follows from \cite{DLM20}*{Lemma 3.1}. Thus, Lipschitz-free $p$-spaces over $V_1(G)$ are isomorphic to their $\ell_p$-sum for every $p\in(0,1]$.
\end{Remark}

Lemma~\ref{lem:gen2}, Lemma~\ref{lem:gen1} and Lemma~\ref{lem:separatedpoint} easily yield the following.

\begin{Corollary}\label{cor:generalized}Let $X$ be a Banach space and let $\alpha, p\in (0,1]$.
\begin{enumerate}[label={(\roman*)},leftmargin=*,widest=iii]
\item If $\MM\subset X$ has non-empty interior, then
\[
\F_p(B_X,\Vert \cdot\Vert^\alpha)\unlhd_{2^\alpha} \F_p(\MM,\Vert \cdot\Vert^\alpha).
\]
\item If $\MM\subset X$ is a bounded set, then
\[
\F_p(X\setminus B_X,\Vert \cdot\Vert^\alpha)\unlhd_{3^{1/p}} \F_p(X\setminus \MM,\Vert \cdot\Vert^\alpha).
\]
\item For every $0<s<S<\infty$, $X_{[s,S]}$ is complementably $p$-amenable in $X\setminus\{0\}$ with constant $2\cdot 3^{1/p}$.
\item Moreover, there is a constant $C = C(p,\alpha)>0$ (depending on $p,\alpha$ but not on $X$) such that if $0<s<S<\infty$ and $\MM\subset X$ is closed under multiplication by $\frac{S}{d(x,0)}$ for every $x\in\MM\setminus B(0,S)$, closed under multiplication by $\frac{s}{d(x,0)}$ for every $x\in\MM\cap B(0,s)$, and we consider $\MM$ endowed with the snow-flaking $\Vert \cdot\Vert^\alpha$, then for every $0<s<S<\infty$, $\MM_{[s,S]}$ is complementably $p$-amenable in $\MM$ with constant $C$.
\end{enumerate}
\end{Corollary}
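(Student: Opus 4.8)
The plan is to treat all four items as an assembly of the two workhorse lemmas, after recording the relevant structure of a Banach space. First I would observe that on any Banach space $X$ the map $\sigma(x,t)=tx$ is simultaneously a self-similar contraction (for $t\in[0,1]$) and a self-similar dilation (for $t\in\{0\}\cup[1,\infty)$): conditions \ref{it:basic2}--\ref{it:selfsimilar2} of Definition~\ref{def:geodesic} all hold with equality, and $X$ is geodesic through line segments. I would also record the two isometric reductions that make the scale irrelevant: dilating the (snow-flaked) metric by a constant induces an isometry of the corresponding Lipschitz free $p$-space, and passing to a dense subset leaves the free space unchanged, so open and closed exteriors such as $\{\|x\|>1\}$ and $\{\|x\|\geq 1\}$ give isometric free spaces. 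Finally, a map that is $C$-Lipschitz for $d$ is $C^\alpha$-Lipschitz for $d^\alpha$, which is how the factors $2^\alpha$ appear.

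For (i), since $\MM$ has non-empty interior I would fix a closed ball $\overline{B}(a,S)\subseteq\MM$ and apply Lemma~\ref{lem:gen2} with base point $a$, with $\NN=\MM$ and radius $S$: the hypothesis holds because the radial projection $\sigma_x(S/d(a,x))$ of any farther point onto the sphere of radius $S$ about $a$ lands in $\overline{B}(a,S)\subseteq\MM$. The resulting $2$-Lipschitz ($2^\alpha$-Lipschitz for $d^\alpha$) retraction gives $\F_p(\overline{B}(a,S),\|\cdot\|^\alpha)\unlhd_{2^\alpha}\F_p(\MM,\|\cdot\|^\alpha)$, and translating by $-a$ and dilating by $1/S$ identifies the former space isometrically with $\F_p(B_X,\|\cdot\|^\alpha)$. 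For (ii), with $\MM$ bounded I would choose $S>\sup_{x\in\MM}\|x\|$ and apply Lemma~\ref{lem:gen1} to $\NN=X\setminus(\MM\cup\{0\})$ with radius $S$: now the radial projection of an inner point onto the sphere of radius $S$ misses $\MM$ (because $S$ exceeds the radius of $\MM$) and is nonzero, so the hypothesis holds and $\{\|x\|\geq S\}=\NN_{[S,\infty)}$ is complementably $p$-amenable in $\NN$ with constant $3^{1/p}$. Since adjoining the base point $0$ identifies $\F_p(X\setminus(\MM\cup\{0\}))$ with $\F_p(X\setminus\MM)$, and since dilation together with the density reduction identify $\F_p(\{\|x\|\geq S\})$ with $\F_p(X\setminus B_X)$, this yields the claim with the clean constant $3^{1/p}$.

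Items (iii) and (iv) I would obtain by composing an \emph{outer} contraction step with an \emph{inner} dilation step. For (iii), Lemma~\ref{lem:gen2} applied to $\NN=X\setminus\{0\}$ with radius $S$ (ambient $X$, which is geodesic) retracts onto $X_{(0,S]}$, giving complementable $p$-amenability with constant $2^\alpha$; then Lemma~\ref{lem:gen1} applied to $\NN=X_{(0,S]}$ with radius $s$ produces $X_{[s,S]}=\NN_{[s,\infty)}$ complementably $p$-amenable in $X_{(0,S]}$ with constant $3^{1/p}$. Because complementable $p$-amenability composes (canonical embeddings and their projections compose), I get $X_{[s,S]}$ complementably $p$-amenable in $X\setminus\{0\}$ with constant $2^\alpha 3^{1/p}\leq 2\cdot 3^{1/p}$. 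Item (iv) runs identically with $\MM$ in place of $X\setminus\{0\}$: the first closure hypothesis is exactly what Lemma~\ref{lem:gen2} needs for the outer step (radius $S$), and the second is exactly what Lemma~\ref{lem:gen1} needs for the inner step (radius $s$, with $\NN=\MM_{(0,S]}$), so composing again gives a constant of the form $C(p,\alpha)$.

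The analytic work is already done inside Lemmas~\ref{lem:gen2} and~\ref{lem:gen1}, so the main point of care is base-point bookkeeping. The annuli $\MM_{[s,S]}$, $X_{[s,S]}$ and the exterior $X\setminus B_X$ all exclude $0$, so one must consistently adjoin $0$ and check that the retractions and weighted projections remain Lipschitz after this adjunction and that the two amenabilities compose. For the homogeneous situations (i)--(iii) the adjunction is isometric, which is why the constants stay clean; but in the general situation of (iv) the base point $0$ need not belong to $\MM$, and here I would invoke Lemma~\ref{lem:separatedpoint} to add (and, if necessary, later delete) the point $0$ with a uniform constant, at the cost of replacing the sharp $2^\alpha 3^{1/p}$ by an unspecified $C(p,\alpha)$. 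Matching the two closure hypotheses of (iv) to the hypotheses of the two lemmas, and keeping every constant independent of $X$, is the only genuinely delicate bookkeeping.
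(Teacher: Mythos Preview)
Your proposal is correct and follows the same route the paper intends: the paper's own proof is the single line ``Lemma~\ref{lem:gen2}, Lemma~\ref{lem:gen1} and Lemma~\ref{lem:separatedpoint} easily yield the following,'' and you have accurately unpacked how each item is obtained by instantiating these lemmas with $\sigma(x,t)=tx$, composing the outer contraction (Lemma~\ref{lem:gen2}) with the inner dilation (Lemma~\ref{lem:gen1}) for (iii) and (iv), and tracking the snowflake constants. Your handling of the base-point bookkeeping---observing that for (i)--(iii) the adjunction of $0$ and the translation/dilation are isometric so the constants stay sharp, while in (iv) one may need Lemma~\ref{lem:separatedpoint} at the cost of an unspecified $C(p,\alpha)$---is exactly the care the paper's terse proof leaves to the reader.
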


Our following theorem improves two results of P. Kaufmann from \cite{K15}, namely \cite{K15}*{Theorem 3.1 and Corollary 3.3}.
\begin{Theorem}\label{thm:Banach} Let $X$ be a Banach space, $0<p\le 1$ and $0<\alpha\leq 1$. Then for any $R>1$ and any subset $\MM$ of $X$ closed under multiplication by nonnegative numbers (that is, $\bigcup_{\lambda\geq 0}\lambda\MM\subset\MM$), if we consider $\MM$ endowed with the snow-flaking $\Vert \cdot\Vert^\alpha$, we have
\[
\F_p(\MM)\simeq \F_p(B_\MM) \simeq \F_p(\MM\setminus B_\MM) \simeq \ell_p(\F_p\left(\MM_{(1,R]}\right)).
\]
\end{Theorem}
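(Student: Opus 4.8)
The plan is to recognize the snowflaked cone $(\MM,\|\cdot\|^\alpha,0)$ as a self-similar metric space and feed it into Theorem~\ref{thm:sumsIsoSelfSimilar}. First I would observe that since $\MM$ is closed under multiplication by nonnegative scalars, for any $\lambda>0$ the dilation $f_\lambda\colon x\mapsto \lambda x$ is a bijection of $\MM$ fixing $0$ with $d(f_\lambda(x),f_\lambda(y))=\|\lambda x-\lambda y\|^\alpha=\lambda^\alpha\, d(x,y)$; hence $(\MM,\|\cdot\|^\alpha,0)$ is self-similar with constant $R'$ for any prescribed $R'>1$, via $\lambda=(R')^{1/\alpha}$ (and it is infinite whenever it contains a nonzero point, the degenerate case being trivial). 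In particular I may fix the self-similarity constant to be some $R'$ with $1<R'<R$, so that the requirement $c_2/c_1>R'$ of Theorem~\ref{thm:sumsIsoSelfSimilar} holds for $c_1=1$ and $c_2=R$.

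Next I would verify the complementable $p$-amenability hypothesis. Because $\MM$ is a cone, it trivially satisfies the closedness-under-multiplication conditions of Corollary~\ref{cor:generalized}(iv); thus every annulus $\MM_{[s,S]}$ is complementably $p$-amenable in $\MM$ with a constant $C=C(p,\alpha)$ independent of $s$ and $S$. Using that complementable $p$-amenability is insensitive to passing to the closure (so that $\MM_{(s,S]}$ behaves like $\MM_{[s,S]}$) and that it is inherited by intermediate superspaces $\NN\subset\MM'\subset\MM$, I obtain that $\MM_{(1,R]}$ is complementably $p$-amenable in $\MM_{(\mu^{-1},\mu R]}$ for any $\mu>1$. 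Theorem~\ref{thm:sumsIsoSelfSimilar} then yields the last isomorphism $\F_p(\MM)\simeq\ell_p(\F_p(\MM_{(1,R]}))$.

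For the remaining two isomorphisms I would invoke the ``moreover'' part of Theorem~\ref{thm:sumsIsoSelfSimilar} with an intermediate radius $1<c_3<R$. The two required complementability facts---that $\F_p(\MM_{(1,c_3]})$ and $\F_p(\MM_{(c_3,R]})$ are each complemented in $\F_p(\MM_{(1,R]})$---are again instances of complementable $p$-amenability of an annulus inside a larger annulus, so they follow from Corollary~\ref{cor:generalized}(iv) together with the heredity and closure remarks exactly as above. This produces $\F_p(\MM)\simeq\F_p(\MM\setminus B_\MM(1))=\F_p(\MM\setminus B_\MM)$ and $\F_p(\MM)\simeq\F_p(B_\MM(R))$.

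Finally I would identify $\F_p(B_\MM(R))$ with $\F_p(B_\MM)$. The dilation $f_{R^{-1/\alpha}}$ maps $B_\MM(R)=\{x\in\MM:\|x\|^\alpha\le R\}$ bijectively onto $B_\MM=B_\MM(1)$ and scales the snowflaked metric by the factor $R^{-1}$, so $B_\MM(R)$ and $B_\MM$ are $1$-Lipschitz isomorphic pointed metric spaces and hence have isometrically isomorphic free $p$-spaces. Chaining the four identifications gives the statement. The only genuinely delicate bookkeeping is the matching of the open, closed, and half-open annuli across the various invocations; this is harmless because both the complementable-amenability machinery and Theorem~\ref{thm:sumsIsoSelfSimilar} (via its accompanying remark) are stated to be insensitive to the interval type.
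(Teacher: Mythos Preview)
Your proposal is correct and follows essentially the same route as the paper: use Corollary~\ref{cor:generalized}(iv) together with the heredity and closure remarks to obtain the required complementable $p$-amenability of annuli, and then feed everything into Theorem~\ref{thm:sumsIsoSelfSimilar} (including its ``moreover'' clause). The paper's proof is simply more terse---it does not spell out the choice of self-similarity constant $R'<R$, the verification of the ``moreover'' hypotheses, or the dilation identifying $B_\MM(R)$ with $B_\MM$---but the underlying argument is the same.
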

\begin{proof}
Let $0<c_1<c_2<\infty$ and $\mu>1$. By Corollary~\ref{cor:generalized},
$\MM_{[c_1,c_2]}$ is complementably $p$-amenable in $\MM_{(\mu^{-1}c_1,\mu c_2]}$. Since $\MM_{(c_1,c_2]}$ is dense in $\MM_{[c_1,c_2]}$, $\MM_{(c_1,c_2]}$ also is complementably $p$-amenable in $\MM_{(\mu^{-1}c_1,\mu c_2]}$. Then, the result follows from Theorem~\ref{thm:sumsIsoSelfSimilar}.
\end{proof}

\begin{Remark}Note that since $\lim_{t\to 1} t x=x$ for every $x\in X$, the type of intervals we deal with in Theorem~\ref{thm:Banach} is irrelevant.
\end{Remark}

\begin{Corollary}
Let $X$ be a Banach space, $0<\alpha\le 1$ and $0<p\leq 1$. Then
\[
\F_p(X,\Vert \cdot\Vert^\alpha)\simeq \ell_p(\F_p(S_X\oplus [0,1],\Vert \cdot\Vert^\alpha)).
\]
\end{Corollary}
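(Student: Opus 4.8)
The plan is to reduce the statement to Theorem~\ref{thm:Banach} applied to the full space $\MM=X$ and then to identify the single building block with a free $p$-space over $S_X\oplus[0,1]$ by passing to polar coordinates. First I would apply Theorem~\ref{thm:Banach} to $\MM=X$, which is trivially closed under multiplication by nonnegative scalars, with $R=2$, obtaining
\[
\F_p(X,\Vert\cdot\Vert^\alpha)\simeq \ell_p\bigl(\F_p(X_{(1,2]},\Vert\cdot\Vert^\alpha)\bigr).
\]
By the remark following Theorem~\ref{thm:Banach} the exact shape of the annulus interval is irrelevant, so I may work with the closed annulus $X_{[1,2]}=\{x\in X\colon 1\le\Vert x\Vert\le 2\}$ in place of $X_{(1,2]}$. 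It therefore suffices to prove
\[
\F_p(X_{[1,2]},\Vert\cdot\Vert^\alpha)\simeq \F_p(S_X\oplus[0,1],\Vert\cdot\Vert^\alpha),
\]
since taking $\ell_p$-sums on both sides then yields the Corollary.

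The core of the argument is the polar-coordinate bijection
\[
\psi\colon S_X\times[0,1]\to X_{[1,2]},\qquad \psi(u,t)=(1+t)u,
\]
which maps the base point $(u_0,0)$ to $u_0\in S_X$. Here $S_X\oplus[0,1]$ carries the snow-flaking of a sum norm, which up to a bi-Lipschitz constant we may take to be $\Vert u-v\Vert+|t-s|$. I would check that $\psi$ is bi-Lipschitz for this sum metric. The upper estimate $\Vert(1+t)u-(1+s)v\Vert\le 2(\Vert u-v\Vert+|t-s|)$ is immediate from the triangle inequality. For the lower estimate, the reverse triangle inequality gives $|t-s|=\bigl|\Vert(1+t)u\Vert-\Vert(1+s)v\Vert\bigr|\le\Vert(1+t)u-(1+s)v\Vert$, while, using that the radii lie in $[1,2]$,
\[
\Vert u-v\Vert\le(1+t)\Vert u-v\Vert\le\Vert(1+t)u-(1+s)v\Vert+|t-s|\le 2\Vert(1+t)u-(1+s)v\Vert,
\]
so that $\Vert u-v\Vert+|t-s|\le 3\Vert(1+t)u-(1+s)v\Vert$. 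Raising both estimates to the power $\alpha$ shows that $\psi$ is a Lipschitz isomorphism between the snow-flaked spaces $(S_X\times[0,1],(\Vert\cdot\Vert+|\cdot|)^\alpha)$ and $(X_{[1,2]},\Vert\cdot\Vert^\alpha)$. By functoriality of the free $p$-space construction—the canonical linearizations $L_\psi$ and $L_{\psi^{-1}}$ are mutually inverse isomorphisms (cf.\ \cite{AACD2018}*{Theorem 4.5})—the two free $p$-spaces are isomorphic, which is exactly the displayed identity above.

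The main obstacle is precisely the bi-Lipschitz lower bound for $\psi$: it is the hypothesis that the radii stay bounded away from $0$ (here $\ge 1$) that lets the angular part $\Vert u-v\Vert$ be recovered from $\Vert(1+t)u-(1+s)v\Vert$, which is why a sphere-times-interval, rather than a ball, is the right model for one annulus and why the Corollary is a direct consequence of Theorem~\ref{thm:Banach} rather than of the $\F_p(B_\MM)$ factor. The remaining points are routine bookkeeping: independence of $\F_p$ from the choice of base point and from passing to a dense subset (used to switch between $X_{(1,2]}$ and $X_{[1,2]}$), the harmless affine rescaling of $[0,R-1]$ to $[0,1]$ for a general $R$, and the fact that any of the equivalent sum metrics on $S_X\oplus[0,1]$ gives the same free $p$-space up to isomorphism. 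These can be dispatched exactly as in the proof of Theorem~\ref{thm:Banach} together with Lemma~\ref{lem:separatedpoint}.
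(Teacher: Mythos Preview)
Your proposal is correct and follows essentially the same route as the paper: apply Theorem~\ref{thm:Banach} with $R=2$ to obtain $\F_p(X,\Vert\cdot\Vert^\alpha)\simeq\ell_p(\F_p(X_{[1,2]},\Vert\cdot\Vert^\alpha))$, then use the polar-coordinate map to identify the annulus $X_{[1,2]}$ with $S_X\oplus[1,2]\simeq S_X\oplus[0,1]$ up to Lipschitz isomorphism. The paper simply states that the polar map is a Lipschitz isomorphism; you supply the explicit bi-Lipschitz estimates, which are correct.
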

\begin{proof}
By Theorem~\ref{thm:Banach} we have
$\F_p(X,\Vert \cdot\Vert^\alpha)\simeq \ell_p(\F_p(X_{[1,2]},\Vert \cdot\Vert^\alpha ))$. Moreover, it is easy to see that the ``polar'' map
\[
x\mapsto \left(\frac{x}{\|x\|}, \|x\| \right)
\]
defines a Lipschitz isomorphism from $X_{[1,2]}$ onto $S_X\oplus [1,2]$. Since the intervals $[1,2]$ and $[0,1]$ are isometric we are done.
\end{proof}

\begin{Corollary}\label{cor:NEI}Let $X$ be a Banach space. Suppose $\MM$ is a subset of $X$ with nonempty interior. Then for $0<\alpha\le 1$ and $0<p\le 1$ we have $\F_p(X,\Vert \cdot\Vert^\alpha)\unlhd \F_p(\MM,\Vert \cdot\Vert^\alpha)$.
\end{Corollary}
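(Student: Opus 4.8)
The plan is to assemble Corollary~\ref{cor:NEI} from two results already in hand, namely Theorem~\ref{thm:Banach} and part~(i) of Corollary~\ref{cor:generalized}, together with the elementary observation that the relation $\unlhd$ is stable under precomposition with an isomorphism.

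First I would apply Theorem~\ref{thm:Banach} taking the subset there to be the whole space $X$. This is legitimate because $X$ is trivially closed under multiplication by nonnegative numbers, and in this instance the ball $B_\MM$ appearing in the statement coincides with the unit ball $B_X$. Consequently Theorem~\ref{thm:Banach} yields
\[
\F_p(X,\Vert\cdot\Vert^\alpha)\simeq\F_p(B_X,\Vert\cdot\Vert^\alpha).
\]
Next, since $\MM\subset X$ has nonempty interior by hypothesis, Corollary~\ref{cor:generalized}(i) applies directly and gives
\[
\F_p(B_X,\Vert\cdot\Vert^\alpha)\unlhd_{2^\alpha}\F_p(\MM,\Vert\cdot\Vert^\alpha).
\]

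It then remains to chain these two facts. If $\Phi$ denotes the isomorphism from the first step and $S$, $T$ are the bounded linear maps witnessing the complementation in the second step (so that $T\circ S=\Id$), then $S\circ\Phi$ and $\Phi^{-1}\circ T$ are bounded and satisfy $(\Phi^{-1}\circ T)\circ(S\circ\Phi)=\Phi^{-1}\circ\Phi=\Id$. Hence they witness $\F_p(X,\Vert\cdot\Vert^\alpha)\unlhd\F_p(\MM,\Vert\cdot\Vert^\alpha)$, which is precisely the claim.

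I do not expect a genuine obstacle here: all the analytic content has been front-loaded into Theorem~\ref{thm:Banach} and Corollary~\ref{cor:generalized}, and this corollary is a short bookkeeping consequence of them. The only points requiring care are checking that $X$ genuinely qualifies for Theorem~\ref{thm:Banach} (it does, being closed under nonnegative scaling) and that the ball produced there is exactly $B_X$, together with the trivial but necessary remark that complementability is transitive through an isomorphism.
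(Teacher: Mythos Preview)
Your proof is correct and follows exactly the route the paper takes: the paper's proof is the one-liner ``Just combine Corollary~\ref{cor:generalized} with Theorem~\ref{thm:Banach},'' and you have simply unpacked this by applying Theorem~\ref{thm:Banach} to $X$ itself to get $\F_p(X,\Vert\cdot\Vert^\alpha)\simeq\F_p(B_X,\Vert\cdot\Vert^\alpha)$ and then invoking Corollary~\ref{cor:generalized}(i). The bookkeeping you spell out (checking $X$ is closed under nonnegative scaling, identifying $B_\MM$ with $B_X$, and chaining $\simeq$ with $\unlhd$) is entirely routine and in the intended spirit.
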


\begin{proof}Just combine Corollary~\ref{cor:generalized} with Theorem~\ref{thm:Banach}.
\end{proof}

\begin{Corollary}\label{cor:euclidean}
Let $d\in\Nat$ and $0<p\le 1$. Then $\F_p(\Rea^d)\simeq \F_p(\Rea_+^d)$.
\end{Corollary}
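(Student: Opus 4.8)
The plan is to exhibit $\F_p(\Rea^d)$ and $\F_p(\Rea_+^d)$ as mutually complemented subspaces of one another, each isomorphic to its own $\ell_p$-sum, and then to close the argument with Pe\l czy\'nski's decomposition method. Throughout I regard $\Rea_+^d=[0,\infty)^d$ as a subset of the Banach space $X=\Rea^d$ (endowed with any fixed norm; since all norms on $\Rea^d$ are bi-Lipschitz equivalent, the resulting free $p$-spaces are isomorphic), and I take $\alpha=1$ so that the relevant snow-flaking is the genuine metric.

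First I would record the two complementation relations. Since $\Rea_+^d$ has nonempty interior $(0,\infty)^d$ in $\Rea^d$, Corollary~\ref{cor:NEI} applied with $\MM=\Rea_+^d$ and $\alpha=1$ gives directly
\[
\F_p(\Rea^d)\unlhd\F_p(\Rea_+^d).
\]
For the reverse relation I would use that the coordinatewise positive-part map $r\colon\Rea^d\to\Rea_+^d$, $r(x_1,\dots,x_d)=(x_1^+,\dots,x_d^+)$, is a Lipschitz retraction of $\Rea^d$ onto $\Rea_+^d$ (each coordinate map $t\mapsto t^+$ is $1$-Lipschitz, so $r$ is $1$-Lipschitz in the Euclidean norm and fixes $\Rea_+^d$ pointwise). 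By the retraction criterion recorded after the definition of complementable $p$-amenability, namely \cite{AACD2018}*{Lemma 4.19}, it follows that $\Rea_+^d$ is complementably $p$-amenable in $\Rea^d$, whence
\[
\F_p(\Rea_+^d)\unlhd\F_p(\Rea^d).
\]

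Next I would establish the $\ell_p$-stability of both spaces. Both $\Rea^d$ (trivially) and $\Rea_+^d$ are closed under multiplication by nonnegative scalars, so Theorem~\ref{thm:Banach} applies to each and yields $\F_p(\Rea^d)\simeq\ell_p(\F_p((\Rea^d)_{(1,R]}))$ and $\F_p(\Rea_+^d)\simeq\ell_p(\F_p((\Rea_+^d)_{(1,R]}))$. Since an iterated $\ell_p$-sum is again an $\ell_p$-sum (as used in the proof of Lemma~\ref{lem:InfiniteSums}), each of these is in turn isomorphic to its own $\ell_p$-sum, i.e. $\F_p(\Rea^d)\simeq\ell_p(\F_p(\Rea^d))$ and $\F_p(\Rea_+^d)\simeq\ell_p(\F_p(\Rea_+^d))$.

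Finally, having $\F_p(\Rea^d)\unlhd\F_p(\Rea_+^d)$, $\F_p(\Rea_+^d)\unlhd\F_p(\Rea^d)$, and both spaces isomorphic to their $\ell_p$-sums, Pe\l czy\'nski's decomposition method (\cite{AlbiacKalton2016}*{Theorem 2.2.3}) delivers $\F_p(\Rea^d)\simeq\F_p(\Rea_+^d)$, as desired. I expect the only genuinely substantive inputs to be the two complementation facts: the first is an immediate instance of Corollary~\ref{cor:NEI}, while the second rests on the elementary but decisive observation that the orthant is a Lipschitz retract of the whole space via the positive-part map. The $\ell_p$-stability and the concluding decomposition step are then routine bookkeeping, so no serious obstacle is anticipated.
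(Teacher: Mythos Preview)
Your argument is correct, but it is more elaborate than the paper's. The paper bypasses Pe\l czy\'nski's decomposition entirely: it invokes only the conclusion $\F_p(\MM)\simeq\F_p(B_\MM)$ from Theorem~\ref{thm:Banach}, applied once with $\MM=\Rea^d$ and once with $\MM=\Rea_+^d$ (both closed under nonnegative scalar multiplication), and then observes that the two unit balls $[-1,1]^d$ and $[0,1]^d$ (for the $\ell_\infty$ norm, say) are Lipschitz isomorphic via an affine map, hence have isomorphic free $p$-spaces. This reduces the whole corollary to a one-line geometric observation. Your route---mutual complementation via Corollary~\ref{cor:NEI} and the positive-part retraction, plus $\ell_p$-stability and the decomposition method---is a legitimate alternative and has the merit of illustrating the general machinery of the section, but the paper's shortcut exploits the concrete finite-dimensional situation more efficiently.
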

\begin{proof} The result follows from Theorem~\ref{thm:Banach} in combination with the fact that $[-1,1]^d$ and $[0,1]^d$ are Lipschitz isomorphic.
\end{proof}

The following result will be further improved in Corollary~\ref{cor:doublingSnowflake}.

\begin{Corollary}\label{cor:sfEuclidean}Let $X$ be a finite-dimensional Banach space. Then for $0<\alpha<1$, $\F(X,\Vert\cdot\Vert^\alpha)\simeq\ell_1$.
\end{Corollary}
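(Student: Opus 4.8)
The plan is to reduce the statement to the already-known compact case by means of the self-similar decomposition supplied by Theorem~\ref{thm:Banach}. Since $X$ is itself closed under multiplication by nonnegative scalars, I would first apply Theorem~\ref{thm:Banach} with $\MM=X$, $p=1$, and any fixed $R>1$, which yields
\[
\F(X,\Vert\cdot\Vert^\alpha)\simeq \ell_1\left(\F\left(X_{(1,R]},\Vert\cdot\Vert^\alpha\right)\right).
\]
This concentrates the whole problem into the single snowflaked annulus $X_{(1,R]}$, which is bounded in the original norm (the condition $\Vert x\Vert^\alpha\le R$ forces $\Vert x\Vert\le R^{1/\alpha}$).

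Next I would identify the free space over this annulus. Because $X$ is finite-dimensional, it is complete and its bounded sets are relatively compact; hence the closure $\overline{X_{(1,R]}}$ is a compact subset of $X$. Since the Lipschitz-free space of a metric space coincides (isometrically) with that of its completion, we have $\F(X_{(1,R]},\Vert\cdot\Vert^\alpha)\simeq \F(\overline{X_{(1,R]}},\Vert\cdot\Vert^\alpha)$. The set $\overline{X_{(1,R]}}$, being a subset of the finite-dimensional---hence doubling---space $X$, is doubling, and the doubling property is inherited by the snowflake $\Vert\cdot\Vert^\alpha$ with $\alpha\in(0,1)$. Thus $(\overline{X_{(1,R]}},\Vert\cdot\Vert^\alpha)$ is a compact doubling metric space, so by the previously known result for compact doubling snowflakes (\cite{WeaverBook2018}*{Theorems 4.38 and 8.49}) its free space is isomorphic to $\ell_1$.

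Combining the two steps gives
\[
\F(X,\Vert\cdot\Vert^\alpha)\simeq \ell_1(\ell_1)\simeq \ell_1,
\]
where the last isomorphism holds because an $\ell_1$-sum of countably many copies of the separable space $\ell_1$ is again $\ell_1$ (alternatively, one may appeal directly to Lemma~\ref{lem:InfiniteSums}).

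The argument involves no genuine obstacle: the only substantive input is the compact doubling case cited from Weaver, which is precisely the ingredient that Corollary~\ref{cor:doublingSnowflake} will later remove by dispensing with compactness. Everything else is bookkeeping---passing to the completion, the stability of the doubling property under snowflaking, and the idempotency of the $\ell_1$-sum---and the finite-dimensionality of $X$ is used exactly twice, to guarantee both the compactness of the annulus and the doubling property.
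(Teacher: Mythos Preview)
Your proof is correct and follows essentially the same strategy as the paper: apply Theorem~\ref{thm:Banach} to reduce to a bounded (hence compact) piece of the finite-dimensional space, then invoke Weaver's result that $\F(K,\Vert\cdot\Vert^\alpha)\simeq\ell_1$ for infinite compact $K\subset X$. The only cosmetic difference is that the paper would most naturally use the isomorphism $\F(X,\Vert\cdot\Vert^\alpha)\simeq\F(B_X,\Vert\cdot\Vert^\alpha)$ directly (the closed unit ball is already compact), whereas you pass through the annulus $X_{(1,R]}$ and then need the additional step $\ell_1(\ell_1)\simeq\ell_1$; both routes are contained in the statement of Theorem~\ref{thm:Banach} and lead to the same conclusion.
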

\begin{proof}
It follows from Theorem~\ref{thm:Banach} and the fact that $\F(K,\Vert\cdot\Vert^\alpha)\simeq \ell_1$ whenever $K\subset X$ is an infinite compact set (see \cite{WeaverBook2018}*{Theorems 4.38 and 8.49}
).
\end{proof}

Given $d\in\Nat$, $S^{d-1}$ and $B^d$ denote, respectively, the Euclidean sphere and the Euclidean ball of $\Rea^d$.

\begin{Theorem}\label{thm:sphere}For $d\in\Nat$, $0<p\le 1$ and $0<\alpha\le 1$, we have
\[
\F_p(S^d, |\cdot|^\alpha)\simeq \F_p(\Rea^d, |\cdot|^\alpha).
\]
\end{Theorem}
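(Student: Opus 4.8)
The plan is to establish the two complemented-subspace relations $\F_p(\Rea^d,|\cdot|^\alpha)\unlhd\F_p(S^d,|\cdot|^\alpha)$ and $\F_p(S^d,|\cdot|^\alpha)\unlhd\F_p(\Rea^d,|\cdot|^\alpha)$ and then to close the argument with Pe\l czy\'nski's decomposition method. The whole scheme rests on two preliminary observations. First, by Theorem~\ref{thm:Banach} applied to $X=\Rea^d$ we have $\F_p(\Rea^d,|\cdot|^\alpha)\simeq\F_p(B^d,|\cdot|^\alpha)$, and by Theorem~\ref{thm:sumisomorphimGeneralized} (the self-similar route underlying Theorem~\ref{thm:Banach}) also $\F_p(\Rea^d,|\cdot|^\alpha)\simeq\ell_p(\F_p(\Rea^d,|\cdot|^\alpha))$; the latter will let finite $\ell_p$-sums of copies of $\F_p(\Rea^d,|\cdot|^\alpha)$ be absorbed back into a single copy. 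Second, any closed spherical cap $C\subset S^d$ of geodesic radius $<\pi/2$ is a Lipschitz graph over its equatorial disk, hence bi-Lipschitz to a ball of $\Rea^d$ in the chordal metric; since snow-flaking preserves bi-Lipschitz equivalence, $\F_p(C,|\cdot|^\alpha)\simeq\F_p(B^d,|\cdot|^\alpha)\simeq\F_p(\Rea^d,|\cdot|^\alpha)$.

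For the first relation I would fix one such cap $C$. The geodesic nearest-point projection of $S^d$ onto the (geodesically convex) cap $C$ is Lipschitz, and since the chordal and geodesic metrics on $S^d$ are bi-Lipschitz and snow-flaking preserves this, $C$ is a Lipschitz retract of $(S^d,|\cdot|^\alpha)$. By the remark following the definition of complementable $p$-amenability (a Lipschitz retraction $r$ forces complementable $p$-amenability with constant $\|r\|_{\Lip}$), $C$ is complementably $p$-amenable in $S^d$, whence $\F_p(\Rea^d,|\cdot|^\alpha)\simeq\F_p(C,|\cdot|^\alpha)\unlhd\F_p(S^d,|\cdot|^\alpha)$.

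The reverse relation is where the real work lies. I would cover $S^d$ by finitely many closed caps $C_1,\dots,C_m$, each of radius $<\pi/2$ and with interiors covering $S^d$ (caps centered at a finite net), and choose a Lipschitz partition of unity $(\phi_i)_{i=1}^m$ subordinate to the cover, with $\sum_i\phi_i\equiv 1$ and $\supp\phi_i\subset C_i$. Mimicking Lemma~\ref{lem:KaltonAnologyGeneralized}, I would define $f\colon S^d\to\big(\bigoplus_{i=1}^m\F_p(C_i)\big)_p$ by $f(x)=(\phi_i(x)\,\delta_{C_i}(x))_{i=1}^m$, with the convention $\delta_{C_i}(x):=0$ for $x\notin C_i$, and check that $f$ is Lipschitz; the estimate is the finite, \emph{spatial} analogue of the radial computation in Lemma~\ref{lem:KaltonAnologyGeneralized}, the operative facts being that each $\phi_i$ is Lipschitz, bounded, and vanishes off $C_i$. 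Linearizing $f$ and composing with the canonical operator $P$ of \eqref{eq:5} gives, via $\sum_i\phi_i\equiv1$ and Lemma~\ref{lem:complementedCondition}, that $\F_p(S^d,|\cdot|^\alpha)\unlhd\big(\bigoplus_{i=1}^m\F_p(C_i,|\cdot|^\alpha)\big)_p$. Since each summand is isomorphic to $\F_p(\Rea^d,|\cdot|^\alpha)$, and this space is isomorphic to its own $\ell_p$-sum, the finite sum is isomorphic to $\F_p(\Rea^d,|\cdot|^\alpha)$, so $\F_p(S^d,|\cdot|^\alpha)\unlhd\F_p(\Rea^d,|\cdot|^\alpha)$.

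With both relations and $\F_p(\Rea^d,|\cdot|^\alpha)\simeq\ell_p(\F_p(\Rea^d,|\cdot|^\alpha))$ at hand, Pe\l czy\'nski's decomposition method (see \cite{AlbiacKalton2016}*{Theorem 2.2.3}, as packaged in Lemma~\ref{lem:InfiniteSums}) yields $\F_p(S^d,|\cdot|^\alpha)\simeq\F_p(\Rea^d,|\cdot|^\alpha)$. I expect the only genuinely delicate points to be (i) the Lipschitz bound for the vector-valued map $f$ in the reverse direction, and (ii) the bookkeeping of base points: the caps need not share a common base point, so to make the canonical embeddings $\F_p(C_i)\to\F_p(S^d)$ and the partition-of-unity map simultaneously meaningful I would fix a global base point $0\in S^d$ and replace each $C_i$ by $C_i\cup\{0\}$, invoking Lemma~\ref{lem:separatedpoint} (adjoining a point alters the complementability constants and the free space only up to a harmless $\Rea$-summand and a universal factor). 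Everything else is bi-Lipschitz geometry of the sphere, which snow-flaking leaves intact.
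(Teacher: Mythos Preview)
Your overall strategy is sound, and the second half---the spatial partition-of-unity argument giving $\F_p(S^d)\unlhd\bigoplus_{i=1}^m\F_p(C_i)$---is a legitimate and arguably more direct alternative to the paper's route. The paper instead applies its annular machinery: with the north pole as base point, Theorem~\ref{thm:complemented} (with just two overlapping radial intervals) gives $\F_p(S^d)\unlhd\F_p(S^d[0,1])\oplus\Rea\oplus\F_p(S^d[-1,1/2])$, and Theorem~\ref{thm:reversecomplementedGerenalization} (with a single interval) gives the reverse relation $\F_p(S^d[-1,0])\unlhd\F_p(S^d)$; stereographic projection identifies each cap with a Euclidean ball. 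Your scheme bypasses Theorems~\ref{thm:complemented} and~\ref{thm:reversecomplementedGerenalization} entirely, and your Lipschitz estimate for the vector-valued $f$ works precisely because $S^d$ is bounded, so the $\|\delta_{C_i}(x)\|$ factors are uniformly controlled without the logarithmic device of Lemma~\ref{lem:KaltonAnologyGeneralized}.

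There is, however, one genuine error in the first relation. The geodesic nearest-point projection of $S^d$ onto a proper closed cap $C$ is \emph{not} Lipschitz: the sphere has positive sectional curvature, so the CAT(0) contraction principle you are implicitly invoking fails. The projection is not even continuous at the antipode of the cap's center. Concretely, for the hemisphere $C=\{x_{d+1}\ge 0\}$, the two points $(\pm\sin\varepsilon,0,\dots,0,-\cos\varepsilon)$ are at chordal distance $2\sin\varepsilon$ but their nearest points on $C$ are the antipodal equatorial points $(\pm 1,0,\dots,0)$, forcing the Lipschitz constant to be at least $1/\sin\varepsilon$. The fix is immediate: take $C$ to be a closed hemisphere and use the reflection $(x_1,\dots,x_d,x_{d+1})\mapsto(x_1,\dots,x_d,|x_{d+1}|)$, which is a $1$-Lipschitz retraction of $S^d$ onto $C$. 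With that replacement your argument goes through.
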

\begin{proof}
Consider $S^d$ equipped with the Euclidean distance and choose the ``north'' $\nu=(0,\dots,0,1)$ as base point of $S^d$. If we denote
\[
S^d[s,t]=\{x=(x_i)_{i=1}^{d+1} \in S^d \colon s\le x_{d+1} \le t\}, \quad -1\le s <t\le 1,
\]
and define
\[
\eta(s)=\max\left\{1-\frac{s^2}{2},-1\right\}, 0<s\le\infty,
\]
we have $(S^d)_{[s,t]}=S^d[\eta(t),\eta(s)]$ for all $0<s<t<\infty$. Hence, applying Theorem~\ref{thm:complemented} and Lemma~\ref{lem:separatedpoint} with
\[
(I_n)_{n\in N}=\{ (-\infty,1/2), (0,\infty)\}
\]
and $R=2^{\alpha}$ yields
\[
\F_p(S^d, |\cdot|^\alpha) \unlhd \F_p(S^d[0,1],|\cdot|^\alpha)\oplus\Rea\oplus \F_p(S^d[-1,1/2],|\cdot|^\alpha).
\]
The stereographic projection $M_\nu$ from the north point, given by
\[
(x_i)_{i=1}^{d+1} \mapsto \left( \frac{x_i}{1-x_{d+1}} \right)_{i=1}^d,
\]
is a diffeomorphism from $S^d\setminus\{\nu\}$ onto $\Rea^d$. Moreover, for every $-1\le h \le 1$,
\[
M_\nu ( \{ x\in S^d \colon x_{n+1}=h\} )= \left\{ y\in\Rea^d \colon \Vert y \Vert=\xi(h) \right\},
\]
where $\xi(h)=\sqrt{(1+h)/(1-h)}$.
Consequently, $M_\nu$ is a Lipschitz isomorphism from $S^d[-1,1/2]$ onto $\sqrt 3 B^d$ which maps $S^d[-1,0]$ onto $B_d$. We infer that $S^d[-1,1/2]$ and $S^d[-1,0]$ are Lipschitz isomorphic to $B^d$ and that $S^d[-1,0]$ is a Lipschitz retract of $S^d[-1,1/2]$. Therefore $S^d[-1,0]$ is complementably $p$-amenable in $S^d[-1,1/2]$. Applying Theorem~\ref{thm:reversecomplementedGerenalization} with the singleton $(I_n)_{n\in N}=\{(1/2,\infty)\}$ and $R=2^{\alpha}$, and taking into account Lemma~\ref{lem:separatedpoint} we obtain
\[
\F_p(S^d[-1,0],|\cdot|^\alpha)\unlhd \F_p(S^d,|\cdot|^\alpha).
\]
Since $S^d[-1,0]$ and $S^d[0,1]$ are isometric,
\[
\F_p(S^d[-1,0],|\cdot|^\alpha)\simeq \F_p(S^d[0,1],|\cdot|^\alpha).
\]
Combining and applying Theorem~\ref{thm:Banach} together with Pe\l czy\'nski's decomposition method yields $\F_p(S^d,|\cdot|^\alpha)\simeq\F_p(B^d,|\cdot|^\alpha)$. Invoking Theorem~\ref{thm:Banach} completes the proof.
\end{proof}

\section{Applications to doubling metric spaces}\label{sec:doubling}\noindent
In this section we first provide a proof of the fact that for $p\in(0,1]$, every doubling metric space $\NN$ is complementably $p$-amenable in any metric space containing it , which answers in the positive Question 6.7 from \cite{AACD2019}. This is known for $p=1$, but our proof seems to be interesting even for this case because usually the authors refer to several several deep results from \cite{LN05}, while here we give a brief self-contained argument. Further, we collect applications of this fact together with methods developed in preceding sections.

\subsection{Doubling metric spaces are complementably $p$-amenable}
Let us recall that a metric space $\MM$ is \emph{doubling} if there exists a constant $D(\MM)\in\Nat$, called the \emph{doubling constant} of $\MM$, such that every ball of radius $r>0$ in $\MM$ can be covered by at most $D(\MM)$-many balls of radius $r/2$. It is not very difficult to see that every subspace of a doubling metric space $\MM$ is again doubling with doubling constant bounded by $D(\MM)^2$. Euclidean spaces are typical examples of doubling spaces. Doubling metric spaces are precisely the metric spaces of finite Assouad dimension. The purpose of this subsection is to prove the following result.

\begin{Theorem}\label{thm:doubling}Let $(\MM,d)$ be a metric space and $\NN$ be a closed subset of $\MM$ with finite doubling constant $D\geq 2$. For each $0<p\le 1$, $\NN$ is complementably $p$-amenable in $\MM$ with constant at most $C(p) \, D^{4/p}$.
To be precise, $C(p)=112\cdot 15^{1/p}$.
\end{Theorem}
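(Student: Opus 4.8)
By Lemma~\ref{lem:complementedCondition}, proving that $\NN$ is complementably $p$-amenable in $\MM$ with the stated constant amounts to constructing a Lipschitz map $f\colon\MM\to\F_p(\NN)$ that restricts to $\delta_\NN$ on $\NN$, with Lipschitz norm controlled by $C(p)\,D^{4/p}$. The plan is to build this map via a multiscale partition-of-unity argument following \cite{BMS18}. First I would fix a scale parameter, say working with radii $2^k$ for $k\in\Int$, and for each scale produce, using the doubling property, a suitable net $\NN_k\subset\NN$ that is $2^k$-separated and $2^k$-dense in $\NN$; the doubling constant $D$ controls the local cardinality of these nets and the number of net points of $\NN_k$ within a bounded multiple of $2^k$ of any given point. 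The key geometric input is that for a point $x\in\MM$ at distance $\rho=d(x,\NN)$ from $\NN$, only boundedly many (in terms of $D$) net points at the relevant scale $k$ with $2^k\approx\rho$ are ``close'' to $x$, and one assigns to $x$ a convex combination (a measure in $\F_p(\NN)$) supported on those nearby net points.

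The heart of the construction is to define $f(x)$ for $x\in\MM\setminus\NN$ by interpolating between scales. Concretely, I would set $f(x)=\delta(y)$ for $x\in\NN$ (where $y=x$) and, for $x\notin\NN$ with $\rho=d(x,\NN)$, define $f(x)$ as a weighted average $\sum_i \lambda_i(x)\,\delta(y_i)$ where the $y_i$ range over net points of $\NN$ at distance comparable to $\rho$ from $x$ and the weights $\lambda_i(x)$ form a Lipschitz partition of unity subordinated to a cover of $\MM\setminus\NN$ by balls of radius proportional to the distance-to-$\NN$ function. The map must be designed so that as $x$ approaches a point of $\NN$, the average $f(x)$ converges to the corresponding $\delta$; this forces the interpolation weights to be $1$-homogeneous-like in the scale and to vary Lipschitzly. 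The $p$-quasinorm estimate is where the constant $15^{1/p}$ and the power $D^{4/p}$ enter: because $\F_p(\NN)$ is only $p$-normed, combining $O(D^2)$ or $O(D^4)$ many $\delta$-terms costs a factor $(\#\text{terms})^{1/p-1}$ via $\kappa_m\le m^{1/p-1}$, and controlling the number of overlapping partition-of-unity pieces by a power of $D$ yields the $D^{4/p}$ dependence.

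The main Lipschitz estimate is established by a case analysis on two points $x,z\in\MM$. The easy regime is when $d(x,z)$ is large compared to $\max\{d(x,\NN),d(z,\NN)\}$, handled by the trivial bound $\|f(x)\|,\|f(z)\|\lesssim d(\cdot,\NN)$ plus the triangle inequality. The delicate regime is when $x$ and $z$ are close relative to their distance to $\NN$: here one must exploit that the partition-of-unity weights change slowly, estimate $\|f(x)-f(z)\|^p$ by summing the contributions $|\lambda_i(x)-\lambda_i(z)|^p\|\delta(y_i)\|^p$ together with $\lambda_i(z)^p\|\delta(y_i)-\delta(y_i')\|^p$ for matched net points $y_i,y_i'$ at neighboring scales, and control both the number of terms (by $D$) and each term (by $d(x,z)$). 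I expect the main obstacle to be precisely the bookkeeping in this second regime: one must verify both that $f$ extends continuously and correctly to $\NN$ (so that $f(x)=\delta_\NN(x)$ there rather than merely being close) and that the overlap count, scale-transition errors, and the $1/p$-exponent losses multiply out to exactly $112\cdot 15^{1/p}\,D^{4/p}$ and no worse. Getting the constant clean (as opposed to merely finite) requires a careful, explicit choice of the net density, the radius-to-distance proportionality factor, and the Lipschitz constant of the scalar partition of unity, which is the technically demanding part of the argument.
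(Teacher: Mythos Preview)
Your overall strategy matches the paper's: reduce via Lemma~\ref{lem:complementedCondition} to building a Lipschitz map $f\colon\MM\to\F_p(\NN)$, construct a Whitney-type partition of unity $(\psi_i)$ on $\MM\setminus\NN$ subordinate to a cover with bounded overlap (controlled by a power of $D$) and with centers $x_i\in\NN$, set $f(x)=\sum_i\psi_i(x)\delta(x_i)$, and estimate the Lipschitz constant by case analysis. The paper packages the combinatorial input as Proposition~\ref{prop:partition}, giving a $3D^4$-overlapping cover, and the final constant is $112\,(5\cdot 3D^4)^{1/p}=112\cdot 15^{1/p}D^{4/p}$.

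There is, however, a genuine gap in your sketch of the ``delicate regime'' estimate. You propose to bound $\|f(x)-f(z)\|^p$ by terms of the form $|\lambda_i(x)-\lambda_i(z)|^p\|\delta(y_i)\|^p$, but $\|\delta(y_i)\|=d(y_i,0)$ is the distance to the base point and is \emph{not} controlled by $d(x,z)$; this would not yield a Lipschitz bound. The fix, which is the crux of the paper's argument, is a recentering trick: when $x$ and $y$ lie in a common patch $V_j$, one uses $\sum_i\psi_i\equiv 1$ to write
\[
f(x)-f(y)=\sum_i\bigl(\psi_i(x)-\psi_i(y)\bigr)\bigl(\delta(x_i)-\delta(x_j)\bigr),
\]
so that the molecule norms $\|\delta(x_i)-\delta(x_j)\|=d(x_i,x_j)$ appear instead of $\|\delta(x_i)\|$; these are bounded by a constant times $\max\{d(x,\NN),d(y,\NN)\}$ via property~\ref{it:partition1}. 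Combined with the estimate $\sum_i|\psi_i(x)-\psi_i(y)|^p\lesssim K\,d^p(x,y)/A_{x,y}^p$ (obtained by H\"older from the $\ell_1$ bound, not via $\kappa_m$), the factors of $A_{x,y}$ cancel and the Lipschitz estimate follows. Your ``matched net points at neighboring scales'' language also suggests you are imagining the centers as varying with $x$; in the actual construction the family $(x_i)_{i\in I}$ is fixed once and for all, which is what makes the recentering work cleanly.
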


The proof for $p=1$ was given in \cite{LP13}, where the authors observed that it follows from deep results of Lee and Naor \cite{LN05}. Recently, an easier proof of the essential ingredient by Lee and Naor was given in \cite{BMS18} and this approach actually admits generalization to the case $p<1$, which is what we indicate in this subsection. Moreover, we present a self-contained and easier argument even for the case $p=1$ at the cost of getting a worse estimate ($D^4$ instead of $\log D$ which is the estimate for $p=1$ from \cite{LP13}). Let us give some more details.

First we need the following preliminary result which is more-or-less the content of \cite{BMS18}*{Lemma 2.4}. For the convenience of the reader we include the proof here.
\begin{Proposition}\label{prop:partition}
Let $(\MM,d)$ be a metric space and $\NN\subset\MM$ be a closed subset with finite doubling constant $D\geq 2$. Then there exists a countable
family $(V_i,\phi_i, x_i)_{i\in I}$ such that:
\begin{enumerate}[label={(H.\arabic*)}]
\item\label{it:partition1} $x_i\in\NN$ and $d(x_i,x)\leq 7 d(x,\NN)$ for all $i\in I$ and all $x\in V_i$;
\item\label{it:partition3} $(V_i)_{i\in I}$ is a $3D^4$-overlapping open cover of $\MM\setminus\NN$;
\item\label{it:partition2} for each $i\in I$ the mapping $\phi_i\colon\MM\setminus\NN\to [0,1]$ is $1$-Lipschitz with $\{x\in\MM\setminus\NN\colon \phi_i(x)>0\}\subset V_i$; and
\item\label{it:partition4} for every $x\in \MM\setminus\NN$ there exists $i\in I$ with $\phi_{i}(x) > d(x,\NN)/4$.
\end{enumerate}
\end{Proposition}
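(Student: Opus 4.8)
The plan is to build a Whitney-type decomposition of $\MM\setminus\NN$ in which the scale of each piece is tied to the distance to $\NN$, the centres are taken inside $\NN$, and the bounded overlap is extracted from the doubling property of $\NN$ (this is essentially \cite{BMS18}*{Lemma 2.4}). First I would fix the dyadic scales $2^{n}$, $n\in\Int$, and at each scale choose, via Zorn's lemma, a maximal $2^{n}$-separated subset $A_n\subseteq\NN$. Since a doubling metric space is separable, each $A_n$ is countable and so is the index set $I=\{(n,p):n\in\Int,\ p\in A_n\}$; note this works even when $\MM$ itself is nonseparable, because the centres live in $\NN$. For $(n,p)\in I$ I set $x_{(n,p)}=p$ and
\[
V_{(n,p)}=\Big\{x\in\MM\setminus\NN:\ d(x,p)<\tfrac{7}{2}\,2^{n},\ \ 2^{n-1}<d(x,\NN)<2^{n+2}\Big\},
\]
which is open because $d(\cdot,p)$ and $d(\cdot,\NN)$ are $1$-Lipschitz.

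Property \ref{it:partition1} is then immediate: on $V_{(n,p)}$ one has $d(x,p)<\tfrac{7}{2}2^{n}=7\cdot2^{n-1}<7\,d(x,\NN)$. To see that the $V_{(n,p)}$ cover $\MM\setminus\NN$, given $x$ I would pick $n$ with $2^{n}\le d(x,\NN)<2^{n+1}$, choose $q\in\NN$ with $d(x,q)<2^{n+1}$, and then use maximality of $A_n$ to find $p\in A_n$ with $d(q,p)<2^{n}$; this yields $d(x,p)<3\cdot2^{n}<\tfrac72 2^n$ together with $2^{n-1}<d(x,\NN)<2^{n+2}$, so $x\in V_{(n,p)}$.

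The heart of the argument, and the step I expect to be the main obstacle, is the overlap estimate \ref{it:partition3}. Fix $x\in\MM\setminus\NN$. The slab condition $2^{n-1}<d(x,\NN)<2^{n+2}$ forces $\log_2 d(x,\NN)-2<n<\log_2 d(x,\NN)+1$, an open interval of length $3$, so at most three scales $n$ can contribute. For a single scale $n$, any two admissible centres $p,p_0$ satisfy $d(p,p_0)<7\cdot2^{n}$, so they all lie in the ball $B_\NN(p_0,7\cdot2^{n})$; iterating the doubling inequality four times covers this ball by $D^{4}$ balls of radius $7\cdot2^{n}/16<2^{n-1}$, each of diameter $<2^{n}$ and hence containing at most one point of the $2^{n}$-separated set $A_n$. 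Thus at most $D^{4}$ centres contribute per scale and at most $3D^{4}$ in total.

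Finally, for \ref{it:partition2} and \ref{it:partition4} I would take the tent functions
\[
\phi_{(n,p)}(x)=\max\Big\{0,\ \min\big\{\tfrac{7}{2}2^{n}-d(x,p),\ d(x,\NN)-2^{n-1},\ 2^{n+2}-d(x,\NN)\big\}\Big\},
\]
each a max/min of $1$-Lipschitz distance functions, hence $1$-Lipschitz and nonnegative, and strictly positive exactly on $V_{(n,p)}$, which gives \ref{it:partition2}. For the pair $(n,p)$ produced by the covering argument all three terms are at least $2^{n-1}$, whereas $d(x,\NN)/4<2^{n-1}$ since $d(x,\NN)<2^{n+1}$; this yields $\phi_{(n,p)}(x)\ge 2^{n-1}>d(x,\NN)/4$, which is \ref{it:partition4}. (The tents reach height $\approx 2^{n}$, so the natural codomain in \ref{it:partition2} is $[0,\infty)$.) The only nonroutine ingredient is the doubling count above; everything else is bookkeeping with the constants, which can be adjusted to the stated values.
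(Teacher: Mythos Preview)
Your proof is correct and follows essentially the same approach as the paper: dyadic scales, maximal $2^n$-separated subsets of $\NN$ as centres, and the identical doubling count for the overlap bound. The only difference is cosmetic---you define $V_{(n,p)}$ and $\phi_{(n,p)}$ by explicit ball-and-slab inequalities, whereas the paper thickens Voronoi-type cells $W_{(y,n)}$ inside each dyadic annulus and sets $\phi_i=d(\cdot,\MM\setminus V_i)$; your variant is slightly more direct. Your remark on the codomain is also apt: the paper's own $\phi_i$ are not $[0,1]$-valued either, and the subsequent proof of Theorem~\ref{thm:doubling} never uses that bound.
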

\begin{proof} For $n\in\Int$, let $\NN_n$ be a maximal $2^n$-separated subset of $\NN$, i.e., $d(y,z)\ge 2^n$ for all $y$, $z\in\NN_n$ with $y\not=z$ and $d(x,\NN_n)<2^n$ for all $x\in \NN$. Since $\NN_n$ intersects finitely many elements of any ball of $\MM$, $\inf_{y\in\NN_n} d(x,y)$ is attained for all $x\in\MM$, so that the annulus
\[
W_{n}=\{x\in\MM\setminus \NN \colon 2^n\leq d(x,\NN)<2^{n+1} \}
\]
is covered by the family
\[
W_{(y,n)}=\{x\in W_n \colon d(x,y)\le d(x,z) \text{ for all } z \in \NN_n\}, \quad y\in \NN_n.
\]
In turn, $(W_n)_{n\in\Int}$ is a partition of $\MM\setminus\NN$. Therefore, if we put $I=\{(y,n)\colon n\in \Int, y\in \NN_n\}$, $(W_i)_{i\in I}$ is a cover of $\MM\setminus\NN$.
For each $(y,n)\in I$, put $x_{(y,n)}=y$ and
\[
V_{(y,n)}=\{x\in\MM\setminus\NN\colon d(x,W_{(y,n)}) <2^{n-1}\}.
\]
Since $W_i\subseteq V_i$ for all $i\in I$, $(V_i)_{i\in I}$ is an open cover of $\MM\setminus\NN$. Define
\[
\phi_i(x):=d(x,\MM\setminus V_i), \quad x\in\MM\setminus\NN,\,\in I,
\]
so that \ref{it:partition2} trivially holds. We claim that $(V_i,\phi_i, x_i)_{i\in I}$ is the desired family. We start by proving that if $(y,n)\in I$ and $x\in V_{(y,n)}$ then
\begin{align}
2^{n-1}&\le d(x,\NN)< 5 \cdot 2^{n-1}, \text{ and }\label{eq:part1} \\
d(y, x)&\le 7 \cdot 2^{n-1},\label{eq:part2}
\end{align}
which easily yields \ref{it:partition1}.
Indeed, there is $x'\in W_{(y,n)}$ with $d(x,x')<2^{n-1}$. Since $x'\in W_n$ there is $z\in\NN$ with $d(x',z)<2^{n+1}$. The properties of $\NN_n$ yield $y'\in\NN_n$ with $d(z,y')<2^n$. Since $d(x',y)\leq d(x',y')$,
\begin{align*}
d(x,\NN)&\le d(x',\NN)+d(x,x')< 2^{n+1}+2^{n-1}=5\cdot 2^{n-1},\\
d(x,\NN)&\ge d(x',\NN)-d(x,x')\ge 2^n-2^{n-1}=2^{n-1}, \text{ and} \\
d(x,y)&\leq d(x,x')+d(x',y)\leq d(x,x')+d(x',y')\\
&\leq d(x,x')+d(x',z)+d(z,y')\\
&\leq 2^{n-1}+ 2^{n+1}+ 2^{n}=7\cdot 2^{n-1}.
\end{align*}

\ref{it:partition3}: Given $x\in\MM\setminus\NN$, put $K_n=\{y\in\NN_n \colon x\in V_{(y,n)}\}$ for each $n\in\Int$. Suppose that $K_n\not=\emptyset$ and pick $y\in K_n$. If $z\in K_n$ inequality \eqref{eq:part2} yields
\[
d(y,z) \leq d(y,x)+d(x,z)\le 14\cdot 2^{n-1} = 7 \cdot 2^n.
\]
Therefore $K_n$ is contained in the ball $B(y,7 \cdot 2^n)$. In turn, by the doubling property, $B(y,7 \cdot 2^n)$ is covered by $D^4$ balls of radius $7\cdot 2^{n-4}$. If one of these balls contains two different points $z_1$, $z_2\in K_n$ we reach the absurdity $d(z_1,z_2)\le 14\cdot 2^{n-4}<2^n$, so $|K_n|\le D^4$. Moreover, if $j\in\Int$ is such that $d(x,\NN)\in [2^j, 2^{j+1})$, inequality \eqref{eq:part1} yields $n\in\{j-1,j,j+1\}$.
We infer that $|\{ i\in I \colon x\in V_i\}|\le 3 D^4$.

\ref{it:partition4}: Let $x\in\MM\setminus\NN$ and pick $i=(y,n)\in I$ with $x\in W_i$. If $z\in\MM\setminus V_i$ by definition we have $d(z,x)\ge d(z,W_i)\ge 2^{n-1}$, so $\phi_i(x)\ge 2^{n-1}$. Since, by definition, $d(x,\NN)<4\cdot 2^{n-1}$, we are done.
\end{proof}

\begin{proof}[Proof of Theorem~\ref{thm:doubling}]
Let $ \VV=(V_i,\phi_i, x_i)_{i\in I}$ be as in Proposition~\ref{prop:partition}, and set $K=3D^4$, so that $(V_i)_{i\in I}$ is $K$-overlapping. Hence, by property \ref{it:partition3}, of $\VV$, we candefine $\Phi\colon \MM\setminus\NN\to [0,\infty)$ by $\Phi=\sum_{i\in I}\phi_i$. Moreover, since for each $x$, $y\in\MM\setminus \NN$ the cardinality of the set
\[
I_{x,y}=\{i\in I \colon \phi_i(x)\not=0 \text{ or } \phi_i(y)\not=0\}
\]
is at most $2K$, $\Phi$ is $(2K)$-Lipschitz. Besides, by property~\ref{it:partition4} of $\VV$, $\Phi(x)\ge d(x,\NN)/4$ for all $x\in\MM\setminus\NN$. Hence, for each $i\in I$ we can define $\psi_i\colon\MM\setminus\NN\to [0,1]$ by $\psi_i=\phi_i/\Phi$. Of course, $\sum_{i\in I} \psi_i=1$. Consider $f\colon\MM\to \F_p(\NN)$ given by
\[
f(x):=\begin{cases}
\delta(x) & x\in\NN,\\
\sum_{i\in I} \psi_i(x)\delta(x_i) & x\in\MM\setminus\NN.
\end{cases}
\]
By Lemma~\ref{lem:complementedCondition}, it suffices to show that $f$ is $C$-Lipschitz for $C= 112 (5K)^{1/p}$. First, we prove that for all $x$, $y\in\MM\setminus\NN$
\begin{equation}\label{eq:crucial}
\sum_{i\in I}|\psi_i(x)-\psi_i(y)|^p\leq \frac{2 \cdot 8^p\cdot K}{A_{x,y}^p}d^p(x,y),
\end{equation}
where $A_{x,y}= \max\{d(y,\NN),d(x,\NN)\}$. We may assume without loss of generality that $d(x,\NN)\leq d(y,\NN)$. Using that the functions $\phi_i$ are $1$-Lipschitz
we obtain
\begin{align*}
|\psi_i(x)-\psi_i(y)| &=\frac{|\phi_i(x)(\Phi(y)-\Phi(x))+\Phi(x) (\phi_i(x)-\phi_i(y))|}{\Phi(x)\Phi(y)}\\
&\le \frac{\phi_i(x)|\Phi(y)-\Phi(x)|+\Phi(x) |\phi_i(x)-\phi_i(y)|}{\Phi(x)\Phi(y)}\\
&\le \frac{2K \phi_i(x)d(x,y)+\Phi(x) d(x,y)}{\Phi(x)\Phi(y)}.
\end{align*}
Thus, summing over $I_{x,y}$, we get
\[
\sum_{i\in I_{x,y}} |\psi_i(x)-\psi_i(y)| \leq \frac{4K}{\Phi(y)}d(x,y)\leq \frac{16K}{d(y,\NN)}d(x,y)=\frac{16K}{A_{x,y}}d(x,y).
\]
Applying H\"older's inequality yields \eqref{eq:crucial}. We also infer from H\"older's inequality that for all $x\in\MM\setminus\NN$
\begin{equation}\label{eq:pnorm}
\sum_{i\in I} |\psi_i(x)|^p \le K^{1-p} \left(\sum_{i\in I} |\psi_i(x)|\right)^{1/p}\le K^{1-p}.
\end{equation}

Let $x$, $y\in\MM$. If $\{x,y\}\subset\NN$ then of course we have $\|f(x)-f(y)\| = d(x,y)$. Suppose that $x\in\NN$ and $y\in \MM\setminus\NN$. By properties~\ref{it:partition1} and \ref{it:partition2} of $\VV$,
$d(x_i,y)\le 7 d(x,y)$ whenever $\psi_i(y)\neq 0$. Hence, taking also into account \eqref{eq:pnorm},
\begin{align*}
\|f(y)-f(x)\|^p & = \left\Vert\sum_{i\in I} \psi_i(y)(\delta(x_i)-\delta(y) + \delta(y) - \delta(x))\right\Vert^p\\
& \leq \sum_{i\in I} \psi_i^p(y)\left(d^p(x_i,y)+d^p(y,x)\right)\\
& \leq (7^p+1) d^p(x,y) \sum_{i\in I} \psi_i^p(y) \\
& \leq K^{1-p}(7^p+1)d^p(x,y).\\
\end{align*}

It remains to deal with the case $\{x,y\} \subseteq \MM\setminus\NN$. Suppose that there exists $j\in I$ with $\{x,y\} \subset V_j$. If $x\in V_i$ we have
\[
d(x_i,x_j)\le d(x_i,x)+d(x,x_j)\le 2\cdot 7 d(x,\NN).
\]
Since the same holds if $y\in V_i$, we have
$
d(x_i,x_j)\le 14 A_{x,y}
$
for all $i\in I_{x,y}$. Combining this piece of information with inequality \eqref{eq:crucial} we obtain
\begin{align*}
\|f(x)-f(y)\|^p & =\left \|\sum_{i\in I} (\psi_i(x)-\psi_i(y))(\delta(x_i)-\delta(x_{j}))\right\|^p\\
& \leq \sum_{i\in I_{x,y} } |\psi_i(x)-\psi_i(y)|^p d^p(x_i,x_{j})\\
& \leq 14^p A^p_{x,y} \sum_{i\in I}|\psi_i(x)-\psi_i(y)|^p\\
& \le 2 \cdot 112^p K d^p(x,y).
\end{align*}
Finally, assume that $\{i\in I\colon x\in V_i\}\cap\{i\in I\colon y\in V_i\}=\emptyset$. Taking into account \eqref{eq:pnorm} we have
\begin{align*}
\|f(x)-\delta(y)\|^p & \leq \left\|\sum_{i\in I} \psi_i(x)(\delta(x_i)-\delta(y))\right\|^p\\
&\le \sum_{i\in I} \psi_i^p(x) d^p(x_i,y)\\
&\le \sum_{i\in I} \psi_i^p(x) (d^p(x_i,x)+ d^p(x,y))\\
&\le (7^p d^p(x,\NN)+ d^p(x,y) ) \sum_{i\in I} \psi_i^p(x)\\
&\le K^{1-p} d^p(x,y) + 7^p A^p_{x,y} \sum_{i\in I} \psi_i^p(x).
\end{align*}
Considering also the inequality that we obtain from switching the roles of $x$ and $y$, and using again \eqref{eq:crucial} yields
\begin{align*}
\|f(x)-f(y)\|^p
&\le \|f(x)-\delta(y)\|^p + \|\delta(y)-\delta(x)\|^p+\|\delta(x)-f(y)\|^p\\
&\le (2K^{1-p} + 1) d^p(x,y) +7 A^p_{x,y} \left(\sum_{i\in I} \psi_i^p(x) + \psi_i^p(y)\right)\\
&= (2K^{1-p} + 1) d^p(x,y)+7^p A^p_{x,y}\sum_{i\in I} |\psi_i(x) - \psi_i(y)|^p\\
&\le (2K^{1-p} + 1+ 2 \cdot 56^p\cdot K) d^p(x,y).
\end{align*}
Combining the inequalities and comparing the constants in the estimates yields
$\|f(x)-f(y)\| \le C d(x,y)$.
\end{proof}

\subsection{Applications}

The following result answers in the positive \cite{AACD2019}*{Question 6.7}.

\begin{Corollary}\label{cor:doubling}
Let $\MM$ be a doubling metric space and $\NN\subset \MM$. Then for $p\in(0,1]$, $\NN$ is complementably $p$-amenable in $\MM$ with constant depending only on $p$ and $\MM$. In particular, $\F_p(\MM)$ has the $\pi$-property.
\end{Corollary}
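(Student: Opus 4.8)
The plan is to reduce both assertions to Theorem~\ref{thm:doubling}, whose only gaps relative to the present statement are that it requires $\NN$ to be \emph{closed} and that it quantifies the amenability constant via the doubling constant of $\NN$ itself rather than of the ambient space. First I would dispose of the closedness issue by passing to the closure. For an arbitrary $\NN\subset\MM$, the set $\overline{\NN}$ is closed, and being a subspace of the doubling space $\MM$ it is itself doubling with doubling constant at most $D(\MM)^2$, as recorded above. Hence Theorem~\ref{thm:doubling} applies to $\overline{\NN}$ and yields that $\overline{\NN}$ is complementably $p$-amenable in $\MM$ with constant at most $C(p)\,D(\MM)^{8/p}$, a quantity depending only on $p$ and $\MM$. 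The closure-invariance of complementable $p$-amenability noted in the preliminaries then transfers this property from $\overline{\NN}$ to every set with the same closure, in particular to $\NN$ itself, which settles the first assertion with the uniform constant $C:=C(p)\,D(\MM)^{8/p}$.

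For the $\pi$-property I would exhibit a net of uniformly bounded finite-rank projections converging to $\Id_{\F_p(\MM)}$ uniformly on compact sets, indexed by the finite subsets $F\subset\MM$ containing the base point, directed by inclusion. Each such $F$ is closed, so by the first part it is complementably $p$-amenable in $\MM$ with constant at most $C$; unravelling the definition, there is a bounded operator $E_F\colon\F_p(\MM)\to\F_p(F)$ with $E_F\circ L_\jmath=\Id_{\F_p(F)}$ and $\|L_\jmath\|\,\|E_F\|\le C$. Setting $P_F=L_\jmath\circ E_F$, the identity $E_F\circ L_\jmath=\Id$ gives $P_F^2=P_F$, so $P_F$ is a projection; it has finite rank because $\F_p(F)$ is finite-dimensional, its norm is at most $C$, and it fixes $\delta_\MM(x)=L_\jmath(\delta_F(x))$ for every $x\in F$, hence fixes every element supported on $F$.

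The remaining and main point is the uniform convergence $P_F\to\Id$ on compact sets, for which I would use that the finitely supported measures span a dense subspace of $\F_p(\MM)$. Given a compact set $K\subset\F_p(\MM)$ and $\varepsilon>0$, cover $K$ by finitely many balls of radius $\varepsilon'$ centred at finitely supported measures $\mu_1,\dots,\mu_m$, and let $F_0$ be the union of their (finite) supports together with the base point. For $F\supseteq F_0$ and $x\in K$, choosing $\mu_j$ with $\|x-\mu_j\|\le\varepsilon'$ and using $P_F\mu_j=\mu_j$ gives $P_F x-x=(P_F-\Id)(x-\mu_j)$, whence $\|P_F x-x\|\le(C^p+1)^{1/p}\varepsilon'$ by the $p$-subadditivity of the quasi-norm; taking $\varepsilon'=\varepsilon/(C^p+1)^{1/p}$ yields $\sup_{x\in K}\|P_F x-x\|\le\varepsilon$. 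Since $\sup_F\|P_F\|\le C$, this shows that $\F_p(\MM)$ has the $\pi_C$-property, and in particular the $\pi$-property. I expect the delicate steps to be exactly the two places where uniformity is needed: controlling the amenability constant simultaneously for all finite subsets (handled by the $D(\MM)^2$ bound for subspaces) and running the density/compactness estimate in the non-locally-convex $p$-normed setting; everything else is routine bookkeeping.
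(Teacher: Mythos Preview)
Your proposal is correct and follows essentially the same route as the paper: reduce to closed subsets (the paper simply says ``without loss of generality $\NN$ is closed''), invoke Theorem~\ref{thm:doubling} together with the fact that subspaces of a doubling space have doubling constant controlled by $D(\MM)$, and then for the $\pi$-property build a net of projections indexed by finite subsets via $L_\jmath\circ E_F$. You have merely supplied the details the paper leaves implicit, in particular the uniform-on-compacta convergence argument.
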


\begin{proof} We can assume without loss of generality that $\NN$ is closed. Since $\NN$ is a doubling metric space with doubling constant depending only on that of $\MM$, the first part of the corollary holds. To prove that $\F_p(\MM)$ has the $\pi$-property, we order the set
\[
I=\{ \NN \subset \MM \colon |\NN|<\infty\}
\]
by inclusion, and for each $\NN\in I$ we choose $T_\NN=L_{\jmath}\circ P_\NN$, where $P_\NN\colon \F_p(\MM)\to \F_p(\NN)$ is the map provided by the complementable $p$-amenability of $\NN$ and $L_\jmath\colon \F_p(\MM)\to \F_p(\NN)$ is the canonical map.
\end{proof}

\begin{Corollary}\label{cor:fddDoubling}
Let $\MM$ be a complete countable doubling metric space. Then $\F(\MM)$ has the FDD property.
\end{Corollary}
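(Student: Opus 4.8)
The plan is to realize $\F(\MM)$ as the closed union of an increasing sequence of \emph{uniformly complemented} finite-dimensional subspaces, and then to upgrade this datum to a genuine finite-dimensional decomposition by a standard perturbation of the associated projections. The doubling hypothesis enters only through Corollary~\ref{cor:doubling}, which supplies the crucial uniformity of the complementation constants; everything else is bookkeeping together with one classical Banach-space step, which is where the real work lies.

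Concretely, I would first enumerate the countable space as $\MM=\{x_0,x_1,x_2,\dots\}$, taking $x_0=0$ as base point, and set $\NN_n=\{x_0,\dots,x_n\}$. Since we are in the case $p=1$, the canonical maps $\F(\NN_n)\to\F(\MM)$ are isometric embeddings, so $Y_n:=\F(\NN_n)$ is an increasing sequence of finite-dimensional subspaces of the Banach space $\F(\MM)$, and $\bigcup_n Y_n\supseteq\spn\{\delta(x)\colon x\in\MM\}$ is dense in $\F(\MM)$. Because $\MM$ is doubling, Corollary~\ref{cor:doubling} with $p=1$ yields, for every $n$, a projection $P_n\colon\F(\MM)\to Y_n$ whose norm is bounded by a single constant $C=C(\MM)$ \emph{independent of $n$}.

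From this two facts follow at once. Since $Y_m\subseteq Y_n$ and $P_n$ is the identity on $Y_n$, we get $P_nP_m=P_m$ for all $m\le n$; and since $\sup_n\|P_n\|\le C$ while $P_n$ is eventually the identity on the dense set $\bigcup_k Y_k$, a routine density argument gives $P_n\to\Id$ in the strong operator topology. In particular $P_mP_n\to P_m$ strongly as $n\to\infty$ for each fixed $m$, so the family $(P_n)$ commutes only \emph{asymptotically}.

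The last step is to convert this nested, uniformly complemented, one-sidedly commuting family into an actual finite-dimensional decomposition, and this is the main obstacle. The natural candidate pieces $Z_n:=Y_n\cap\ker P_{n-1}$ satisfy $Y_n=Y_{n-1}\oplus Z_n$, but they fail to form an FDD with partial-sum projections $P_m$: this would require $P_m$ to annihilate $Z_n$ for $n>m$, i.e.\ it would require the kernels $\ker P_n$ to be \emph{decreasing}, whereas the relation at our disposal, $P_nP_m=P_m$, controls only the ranges and not the complements. The remedy is to pass to a rapidly increasing subsequence $(n_k)$ along which $P_{n_j}$ and $P_{n_k}$ are, on the finitely many relevant blocks, close to commuting (using $P_mP_n\to P_m$ and the finite-dimensionality of the $Y_n$), and then to apply the small-perturbation principle---in the same spirit as the gliding-hump argument used in Lemma~\ref{lem:sy} and in \cite{AlbiacKalton2016}*{Theorem 1.3.10}---to replace the $P_{n_k}$ by genuinely commuting projections $R_k$ with $R_jR_k=R_{\min\{j,k\}}$, $R_k\to\Id$ strongly, and $R_k(\F(\MM))$ close to $Y_{n_k}$. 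Setting $Z_1=R_1(\F(\MM))$ and $Z_k=(R_k-R_{k-1})(\F(\MM))$ for $k\ge2$ then exhibits the desired finite-dimensional decomposition of $\F(\MM)$. This correction of the kernels is a well-known technique for producing an FDD from an increasing, uniformly complemented sequence of finite-dimensional subspaces with dense union (see \cite{CasHandbook}); I expect it, rather than the setup, to carry essentially all of the difficulty of the proof.
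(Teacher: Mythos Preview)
Your setup via Corollary~\ref{cor:doubling} is exactly what the paper does: the nested finite subsets $\NN_n$ yield an increasing, uniformly $C$-complemented filtration $(Y_n)$ of $\F(\MM)$ with dense union, i.e., the $\pi$-property. The gap is the final step. The ``correction of the kernels'' you describe---upgrading an increasing, uniformly complemented sequence of finite-dimensional subspaces with dense union to a genuine FDD---is \emph{not} a known perturbation technique; whether every separable Banach space with the $\pi$-property admits an FDD is a long-standing open problem, so \cite{CasHandbook} cannot contain such a device without an extra hypothesis. Concretely, your scheme requires $\|P_{n_j}(I-P_{n_k})\|$ to be small for well-chosen $n_k>n_j$, but strong convergence $P_{n_k}\to\Id$ only gives $P_{n_j}(I-P_{n_k})\to 0$ strongly. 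Finite rank of $P_{n_j}$ does not help: compactness of $P_{n_j}$ forces $(I-P_{n_k})P_{n_j}\to 0$ in norm (which you already have exactly, since $P_{n_k}P_{n_j}=P_{n_j}$), not $P_{n_j}(I-P_{n_k})\to 0$. Restricting to unit vectors in $\ker P_{n_k}$ shows that $\|P_{n_j}(I-P_{n_k})\|\ge\|P_{n_j}|_{\ker P_{n_k}}\|$, and nothing in your hypotheses controls this. The gliding-hump analogy with \cite{AlbiacKalton2016}*{Theorem~1.3.10} breaks down for the same reason.

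The paper supplies precisely the missing ingredient. A complete doubling metric space is proper (closed balls are complete and totally bounded, hence compact), so Dalet's theorem \cite{D15} applies to the countable proper space $\MM$ and gives $\F(\MM)$ the MAP. The result actually in \cite{CasHandbook} (Theorems~4.6 and~6.3) is that a separable Banach space with the $\pi$-property \emph{and} MAP has an FDD; your instinct to cite \cite{CasHandbook} was on target, but the theorem there genuinely needs the second hypothesis. In particular, completeness and countability of $\MM$ are not used merely to set up the enumeration---they are what allow the appeal to \cite{D15} for MAP, without which the passage from the $\pi$-property to FDD is, as far as anyone knows, blocked.
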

\begin{proof}
By Corollary~\ref{cor:doubling}, $\F(\MM)$ has the $\pi$-property and since any complete doubling metric space is proper, by \cite{D15} $\F(\MM)$ has MAP. Since any separable Banach space with $\pi$-property and MAP has the FDD property (see \cite{CasHandbook}*{Theorem 4.6 and Theorem 6.3}), we are done. \end{proof}

We also immediately obtain the following interesting result which applies, e.g., to Carnot groups.
\begin{Corollary}\label{cor:carnotGroup}
Let $p\in(0,1]$ and $(\MM,d)$ be a self-similar doubling metric space. Then there exists $R>1$ such that
\[
\F_p(\MM) \simeq \F_p(B_\MM)\simeq \F_p(\MM\setminus B_\MM)\simeq \ell_p\left(\F_p(\MM_{(1,R]}))\right).
\]
\end{Corollary}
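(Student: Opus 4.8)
The plan is to obtain all four isomorphisms as a direct combination of Theorem~\ref{thm:sumsIsoSelfSimilar}, which does the structural work, with Corollary~\ref{cor:doubling}, which makes every complementability and amenability hypothesis appearing in that theorem automatic. The only genuinely new point is a bookkeeping one: choosing the dilation parameter $R$ to be a power of the self-similarity constant, so that self-similarity can transport the ball produced by the theorem back onto the unit ball $B_\MM$.

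First I would set up notation. Let $f\colon\MM\to\MM$ be the bijection witnessing self-similarity, with constant $R_0>1$ and base point $0$ its fixed point. I would then put $R:=R_0^2$, so that $R>R_0$, and take $c_1:=1$, $c_2:=R$; these satisfy $c_2/c_1=R_0^2>R_0$, as required to apply Theorem~\ref{thm:sumsIsoSelfSimilar}. Since $\MM$ is doubling, every subset of $\MM$ is doubling with a controlled constant, so for any fixed $\mu>1$ Corollary~\ref{cor:doubling} applied inside the doubling space $\MM_{(\mu^{-1},\mu R]}$ shows that $\MM_{(1,R]}$ is complementably $p$-amenable in $\MM_{(\mu^{-1},\mu R]}$. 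This is precisely the hypothesis of Theorem~\ref{thm:sumsIsoSelfSimilar}, whose main conclusion then gives $\F_p(\MM)\simeq\ell_p(\F_p(\MM_{(1,R]}))$.

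Next I would verify the two ``moreover'' hypotheses of Theorem~\ref{thm:sumsIsoSelfSimilar}. Fixing any $c_3\in(1,R)$, Corollary~\ref{cor:doubling} applied inside the doubling space $\MM_{(1,R]}$ shows that both $\MM_{(1,c_3]}$ and $\MM_{(c_3,R]}$ are complementably $p$-amenable in $\MM_{(1,R]}$; by the definition of complementable $p$-amenability this means $\F_p(\MM_{(1,c_3]})$ and $\F_p(\MM_{(c_3,R]})$ are both complemented in $\F_p(\MM_{(1,R]})$. Feeding these into the two ``moreover'' conclusions of Theorem~\ref{thm:sumsIsoSelfSimilar} (with $c_1=1$ and $c_2=R$) yields $\F_p(\MM)\simeq\F_p(\MM\setminus B_\MM(1))=\F_p(\MM\setminus B_\MM)$ and $\F_p(\MM)\simeq\F_p(B_\MM(R))$.

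Finally I would reconcile $\F_p(B_\MM(R))$ with $\F_p(B_\MM)$ using self-similarity, which is where the choice $R=R_0^2$ pays off. The map $f$ fixes $0$ and scales all distances by $R_0$, hence it restricts to a similarity of $B_\MM(c)$ onto $B_\MM(R_0 c)$; the induced canonical operators $L_f$ and $L_{f^{-1}}$ are mutually inverse with norms at most $R_0$ and $R_0^{-1}$, so $\F_p(B_\MM(c))\simeq\F_p(B_\MM(R_0 c))$ with isomorphism constant $1$. Iterating twice, $\F_p(B_\MM)=\F_p(B_\MM(1))\simeq\F_p(B_\MM(R_0))\simeq\F_p(B_\MM(R_0^2))=\F_p(B_\MM(R))$, and combining this with the previous paragraph gives $\F_p(\MM)\simeq\F_p(B_\MM)$. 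Chaining the displayed isomorphisms finishes the proof. I do not expect any real obstacle: the argument is entirely a matter of checking that the doubling property supplies the complementability inputs for free and that the dilation parameter is chosen so that self-similarity lands exactly on the unit ball rather than on a dilate of it.
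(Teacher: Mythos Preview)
Your proposal is correct and follows essentially the same route as the paper, which simply says ``apply Theorem~\ref{thm:sumsIsoSelfSimilar} and Theorem~\ref{thm:doubling}''; you have just unpacked the bookkeeping explicitly. In particular, your care in choosing the dilation parameter $R=R_0^2$ so that self-similarity carries $B_\MM(c_2)$ back onto $B_\MM$ is a detail the paper leaves implicit, and your use of Corollary~\ref{cor:doubling} in place of Theorem~\ref{thm:doubling} is immaterial since the former is an immediate consequence of the latter.
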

\begin{proof}
Just apply Theorem~\ref{thm:sumsIsoSelfSimilar} and Theorem~\ref{thm:doubling}.
\end{proof}

The following corollary is a generalization of \cite{K15}*{Corollary 3.5}. Recall that a metric space $\MM$ is \emph{Lipschitz homogeneous} if for every $x,y\in\MM$ there is a Lipschitz isomorphism $f\colon\MM\to\MM$ with $f(x)=y$.

\begin{Corollary}
Let $(\MM,d)$ be a pointed self-similar doubling Lipschitz homogeneous metric space. Then given $\NN\subset\MM$ with non-empty interior we have $\F_p(\MM)\simeq\F_p(\NN)$ for all $p\in(0,1]$.
\end{Corollary}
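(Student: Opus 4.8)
The plan is to prove the two complementation relations $\F_p(\NN)\unlhd\F_p(\MM)$ and $\F_p(\MM)\unlhd\F_p(\NN)$ and then close the loop with Pe{\l}czy\'nski's decomposition method, exploiting that $\F_p(\MM)$ is isomorphic to its own $\ell_p$-sum. A preliminary normalization comes first: because $\NN$ has non-empty interior there are $x_0\in\NN$ and $\rho>0$ with $B_\MM(x_0,\rho)\subset\NN$, and Lipschitz homogeneity furnishes a Lipschitz isomorphism $g\colon\MM\to\MM$ with $g(x_0)=0$. As $g$ is bi-Lipschitz, $g(\NN)$ contains a ball $B_\MM(\rho')$ centered at the base point, while $g|_\NN$ induces an isomorphism $\F_p(\NN)\simeq\F_p(g(\NN))$ by functoriality of the free $p$-space. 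Replacing $\NN$ by $g(\NN)$, I may thus assume $B_\MM(\rho)\subset\NN$ for some $\rho>0$, with $0$ the fixed point of the self-similarity. This reduction is what lets the self-similar scaling, which is anchored at $0$, interact with $\NN$.

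One direction is essentially free: $\NN$ is a subset of the doubling space $\MM$, so by Corollary~\ref{cor:doubling} it is complementably $p$-amenable in $\MM$, that is $\F_p(\NN)\unlhd\F_p(\MM)$. For the reverse direction I would first use Corollary~\ref{cor:carnotGroup} to get $\F_p(\MM)\simeq\F_p(B_\MM)$, and then complementably embed $\F_p(B_\MM)$ into $\F_p(\NN)$. Let $f$ be the self-similar bijection of \eqref{eq:Rclosed}, scaling distances by $R>1$ and fixing $0$; choosing $n\in\Nat$ with $R^{-n}\le\rho$, the iterate $f^{-n}$ is a Lipschitz isomorphism carrying $B_\MM=B_\MM(1)$ onto the closed ball $B_\MM(R^{-n})\subset B_\MM(\rho)\subset\NN$, whence $\F_p(B_\MM)\simeq\F_p(B_\MM(R^{-n}))$. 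Since $\NN$ is itself doubling and $B_\MM(R^{-n})\subset\NN$, Corollary~\ref{cor:doubling} gives $\F_p(B_\MM(R^{-n}))\unlhd\F_p(\NN)$, and chaining the relations yields $\F_p(\MM)\simeq\F_p(B_\MM)\unlhd\F_p(\NN)$.

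To conclude I would invoke Pe{\l}czy\'nski's decomposition method (\cite{AlbiacKalton2016}*{Theorem 2.2.3}) with $X=\F_p(\MM)$ and $Y=\F_p(\NN)$: the two steps above give $X\unlhd Y$ and $Y\unlhd X$, while Corollary~\ref{cor:carnotGroup} realizes $\F_p(\MM)$ as an $\ell_p$-sum of copies of a single free space, so that iterating $\ell_p$-sums gives $\ell_p(X)\simeq X$. Mutual complementation together with $X\simeq\ell_p(X)$ is exactly the input the decomposition method needs to conclude $X\simeq Y$, that is $\F_p(\MM)\simeq\F_p(\NN)$. I expect the genuine obstacle to be the reverse complementation $\F_p(\MM)\unlhd\F_p(\NN)$, the only place where the three hypotheses truly combine: the doubling property upgrades ``$\NN$ contains a ball'' to ``$\F_p(\NN)$ contains a complemented copy of a ball's free space'', self-similarity rescales that ball up to the unit ball, and homogeneity was needed beforehand to make sure the ball could be centered where the scaling is anchored.
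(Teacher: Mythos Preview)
Your proof is correct and follows essentially the same route as the paper: establish mutual complementation $\F_p(\NN)\unlhd\F_p(\MM)$ and $\F_p(\MM)\simeq\F_p(B_\MM)\unlhd\F_p(\NN)$ via the doubling property (Theorem~\ref{thm:doubling}/Corollary~\ref{cor:doubling}) and Corollary~\ref{cor:carnotGroup}, then conclude with Pe{\l}czy\'nski's decomposition method using $\F_p(\MM)\simeq\ell_p(\F_p(\MM))$. The only cosmetic difference is that the paper compresses your two-step use of homogeneity (to recenter) and self-similarity (to rescale) into the single observation that in a self-similar Lipschitz homogeneous space all balls are Lipschitz equivalent.
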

\begin{proof}
Since $\MM$ is Lipschitz homogeneous, all the balls are Lipschitz equivalent. Thus, if $B\subset\NN$ is a ball, using Theorem~\ref{thm:doubling} and Corollary~\ref{cor:carnotGroup} we obtain
\[
\F_p(\NN)\unlhd \F_p(\MM)\simeq \F_p(B_\MM)\simeq\F_p(B)\unlhd \F_p(\NN).
\]
By Corollary~\ref{cor:carnotGroup} we have $\F_p(\MM)\simeq\ell_p(\F_p(\MM))$ and so an application of Pe{\l}czy\'nski's decomposition method finishes the proof.
\end{proof}

We next see a generalization of the essentially known result that $\F(K,d^\alpha)\simeq \ell_1$ whenever $(K,d)$ is an infinite compact set in a doubling metric space (see
\cite{WeaverBook2018}*{Theorems 4.38 and 8.49}). We do not aim here to prove its analogue for $p<1$.

\begin{Corollary}\label{cor:doublingSnowflake}
Let $(\MM,d)$ be a doubling metric space and $0<\alpha<1$. Then $\F(\MM,d^\alpha)\simeq \ell_1$.
\end{Corollary}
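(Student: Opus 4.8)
The plan is to reduce, by a careful application of Assouad's embedding theorem, to the free space over the snowflake of a finite-dimensional Euclidean space — a case already settled in Corollary~\ref{cor:sfEuclidean} — and then to combine the complementation result of Theorem~\ref{thm:doubling} with the fact that $\ell_1$ is a prime space. First I would dispose of the degenerate case: the conclusion forces $\MM$ to be infinite (for finite $\MM$ the space $\F(\MM,d^\alpha)$ is finite-dimensional), so I assume this throughout; moreover, since the free space depends only on the completion of its underlying metric space, I may assume $(\MM,d)$ complete.

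The decisive step is the choice of target space. I would fix an exponent $\alpha'$ with $\alpha<\alpha'<1$ and put $\epsilon=\alpha/\alpha'\in(0,1)$. Because $(\MM,d)$ is doubling, Assouad's theorem supplies $N\in\Nat$ and a bi-Lipschitz embedding $f$ of the snowflake $(\MM,d^{\epsilon})$ into Euclidean space $(\Rea^N,|\cdot|)$, that is, there are constants $0<c\le C<\infty$ with $c\,d(x,y)^{\epsilon}\le |f(x)-f(y)|\le C\,d(x,y)^{\epsilon}$. Raising these inequalities to the power $\alpha'$ and recalling $\epsilon\alpha'=\alpha$ shows that $f$ is a Lipschitz isomorphism of $(\MM,d^\alpha)$ onto $\mathcal{S}:=f(\MM)$, now viewed as a subset of the snowflaked space $(\Rea^N,|\cdot|^{\alpha'})$. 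By functoriality of the free-space construction this gives
\[
\F(\MM,d^\alpha)\simeq \F(\mathcal{S},|\cdot|^{\alpha'}).
\]

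Now $(\Rea^N,|\cdot|)$ is doubling, hence so is its snowflake and so is the subset $\mathcal{S}$; passing to the closure of $\mathcal{S}$ (which alters neither the free space nor, by the remarks preceding Lemma~\ref{lem:complementedCondition}, the complementable amenability) I may take $\mathcal{S}$ closed. Applying Theorem~\ref{thm:doubling} with $p=1$ then shows that $\mathcal{S}$ is complementably $1$-amenable in $(\Rea^N,|\cdot|^{\alpha'})$, so that
\[
\F(\MM,d^\alpha)\simeq \F(\mathcal{S},|\cdot|^{\alpha'})\unlhd \F(\Rea^N,|\cdot|^{\alpha'}).
\]
Since $0<\alpha'<1$ and $\Rea^N$ is a finite-dimensional Banach space, Corollary~\ref{cor:sfEuclidean} yields $\F(\Rea^N,|\cdot|^{\alpha'})\simeq\ell_1$. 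Thus $\F(\MM,d^\alpha)$ is isomorphic to a complemented subspace of $\ell_1$, and being infinite-dimensional it must be isomorphic to $\ell_1$ itself, because $\ell_1$ is prime (see \cite{AlbiacKalton2016}).

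The hard part is not the free-space machinery but getting the geometry right: one must land in a snowflake $(\Rea^N,|\cdot|^{\alpha'})$ with $\alpha'<1$ rather than in Euclidean space, since $\F$ of an arbitrary subset of $\Rea^N$ need not be $\ell_1$. Applying Assouad's theorem naively with exponent $\alpha$ would embed $(\MM,d^\alpha)$ into plain $(\Rea^N,|\cdot|)$ and lose exactly the snowflake structure that Corollary~\ref{cor:sfEuclidean} exploits. Choosing $\alpha'\in(\alpha,1)$ and running Assouad with the reduced exponent $\alpha/\alpha'<1$ is the manoeuvre that simultaneously keeps the snowflaking exponent of the target strictly below $1$ (as Assouad requires) and preserves enough nonconvexity of the target metric for the compact-snowflake input behind Corollary~\ref{cor:sfEuclidean} to remain available.
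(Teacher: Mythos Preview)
Your proof is correct and follows essentially the same approach as the paper: choose an intermediate exponent $\alpha'\in(\alpha,1)$ (the paper calls it $\beta$), apply Assouad's theorem with exponent $\alpha/\alpha'$ to embed $(\MM,d^\alpha)$ bi-Lipschitzly into $(\Rea^N,|\cdot|^{\alpha'})$, invoke Theorem~\ref{thm:doubling} to get complementation, then use Corollary~\ref{cor:sfEuclidean} and the primeness of $\ell_1$. Your write-up is in fact slightly more careful than the paper's in handling closures and the infinite/finite dichotomy, and your final paragraph nicely articulates why the intermediate-exponent manoeuvre is the crux.
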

\begin{proof}
Pick $0<\alpha<\beta<1$. By Assouad's theorem (see \cite{Assouad1983}), $(\MM,d^{\alpha/\beta})$ admits a bi-Lipschitz embedding in some Euclidean space. Thus, we assume without loss of generality that $(\MM,d^\alpha)$ is a subset of $(X,d^\beta)$ for some finite-dimensional Banach space $X$. Since $(X,d^\beta)$ is a doubling metric space, from Theorem~\ref{thm:doubling} we obtain $\F(\MM,d^\alpha)\unlhd \F(X,d^\beta)$. By Corollary~\ref{cor:sfEuclidean}, $\F(X,d^\beta)\simeq\ell_1$. Since $\ell_1$ is a prime Banach space (see \cite{Pel1960}) we are done.
\end{proof}

Let us consider the case when the doubling metric space in question is not self-similar (a typical example is $\Int^d$).

Given $R\in(0,1)\cup(1,\infty)$, we shall say that a pointed metric space $(\MM,d,0)$ is \emph{$R$-closed} if there exists a map $f\colon \MM\to \MM$ fulfilling \eqref{eq:Rclosed} with $f(0)=0$. Note that, in the case when $R<1$ and $\MM$ is complete, there is always a fix point of a mapping $f$ satisfying \eqref{eq:Rclosed}, so the condition $f(0)=0$ is redundant. Both metric spaces $\Nat^d$ and $\Int^d$ are $2$-closed and doubling.

\begin{Theorem}\label{thm:doublingAndClosed}
Let $(\MM,d,0)$ be a pointed doubling metric space which is $R$-closed for some $R\in(0,1)\cup(1,\infty)$. Let $N$ be a countable set and $\phi\colon N\to\Nat_*$ unbounded. Then:
\begin{enumerate}[label={(\roman*)}, leftmargin=*,widest=ii]

\item\label{it:notBall} If $R>1$, for every $c>R$ we have
\[
\ell_p(\F_p(\MM\setminus B_{\MM}))\simeq \F_p(\MM\setminus B_{\MM}) \simeq \left(\bigoplus_{n\in N} \F_p(\MM_{
(R^{\phi(n)}, c R^{\phi(n)}]
})\right)_p.
\]
\item\label{it:ball} If $R<1$, for every $0<c<R$ we have
\[
\ell_p(\F_p(B_{\MM})) \simeq\F_p(B_{\MM}) \simeq
\left(\bigoplus_{n\in N}
\F_p(\MM_{
( c R^{\phi(n)}, R^{\phi(n)}]
})\right)_p.
\]
\end{enumerate}
In particular, if $\MM$ is uniformly separated and $R>1$, or $\MM$ is bounded and $R<1$, we have $\F_p(\MM)\simeq \ell_p(\F_p(\MM))$.
\end{Theorem}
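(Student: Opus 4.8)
The plan is to deduce statements \ref{it:notBall} and \ref{it:ball} from Theorem~\ref{thm:sumisomorphimGeneralized} and Theorem~\ref{thm:sumisomorphimGeneralized:1}, respectively, applied to the pointed metric spaces $\MM\setminus B_\MM=\MM^*_{(1,\infty)}$ (when $R>1$) and $B_\MM=\MM^*_{(0,1]}$ (when $R<1$), both carrying the base point $0$. In the first case one sets $c_1=1$ and $c_2=c$, so that $A_n=(c_1R^n,c_2R^n]=(R^{n},cR^{n}]$ matches the annuli in the conclusion; in the second case one invokes the theorem with $\tilde R=1/R>1$, $c_1=c$ and $c_2=1$, so the relevant annuli become $(cR^{n},R^{n}]$. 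Since $c>R$ (resp.\ $c<R$), we have $\log_{R}(c_2/c_1)>1$ (resp.\ $\log_{\tilde R}(c_2/c_1)>1$), so admissible parameters $\lambda$, $\alpha$, $c_3$ and an arbitrary $\mu>1$ can be chosen. Both target spaces are infinite: for $x\in\MM\setminus\{0\}$ the orbit $(f^{k}(x))_{k}$ satisfies $d(0,f^{k}(x))=R^{k}d(0,x)$, which tends to $\infty$ when $R>1$ and to $0$ when $R<1$, producing infinitely many points in $\MM_{(1,\infty)}$ and in $\MM_{(0,1]}$, respectively. Once the four bullet hypotheses are checked, the quoted theorems deliver the displayed isomorphisms, including the factor $\ell_p(\F_p(\cdot))$.

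The boundedness hypotheses hold by construction ($d(0,x)>1$ on $\MM^*_{(1,\infty)}$ and $d(0,x)\le1$ on $\MM^*_{(0,1]}$). The third and fourth bullets only assert the complementable $p$-amenability of a sub-annulus inside $\MM_{A_0}$ and of $\MM_{A_n}$ inside the slightly enlarged $\MM_{B_n}$. Since $\MM_{A_0}$ and each $\MM_{B_n}$ are subsets of the doubling space $\MM$, they are themselves doubling with constant at most $D(\MM)^2$, so Corollary~\ref{cor:doubling} applied to these ambient spaces supplies the required complementable $p$-amenability with a single constant depending only on $p$ and $D(\MM)$; in particular it is independent of $n$. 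Note that $R$-closedness plays no role here.

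The crux is the second bullet: that $\F_p(\MM_{A_n})$ be $K_1$-complemented in $\F_p(\MM_{A_m})$ for all $0\le n\le m$ with $K_1$ independent of $n$ and $m$. Here I would use $R$-closedness. The iterate $g=f^{m-n}$ satisfies $d(g(x),g(y))=R^{m-n}d(x,y)$ and $g(0)=0$, hence $d(0,g(x))=R^{m-n}d(0,x)$, and a direct check gives $g(\MM_{A_n})\subseteq\MM_{A_m}$ in both cases. Since $g$ is a similarity onto its image, $\F_p(\MM_{A_n})$ is isometrically isomorphic to $\F_p(g(\MM_{A_n}))$, and by Corollary~\ref{cor:doubling}, applied with ambient space $\MM_{A_m}$ (doubling with constant at most $D(\MM)^2$), the latter is complemented in $\F_p(\MM_{A_m})$ with constant depending only on $p$ and $D(\MM)$. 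Composing yields the desired uniform complementation. The main obstacle to watch for is precisely the uniformity of $K_1$: one must resist proving only the one-step complementation of $\MM_{A_n}$ into $\MM_{A_{n+1}}$ and then iterating, which would produce a constant growing geometrically in $m-n$; instead the single similarity $g=f^{m-n}$ together with the scale-independent constant of Corollary~\ref{cor:doubling} keeps $K_1$ bounded.

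Finally, for the last assertion, suppose first that $\MM$ is uniformly separated and $R>1$. A doubling, uniformly separated space has finite balls: subdividing the unit ball via the doubling property into balls of radius below half the separation constant leaves at most one point in each, so $B_\MM$ is finite. As $\MM$ is infinite, deleting the finitely many points of $B_\MM\setminus\{0\}$ one at a time through Lemma~\ref{lem:separatedpoint}\ref{it:omitPointIso} gives $\F_p(\MM)\simeq\F_p(\MM\setminus B_\MM)$, and part \ref{it:notBall} then yields $\F_p(\MM)\simeq\ell_p(\F_p(\MM))$. If instead $\MM$ is bounded and $R<1$, rescaling the metric by the reciprocal of an upper bound for $d(0,\cdot)$—which preserves $R$-closedness, the doubling property, and the isomorphism class of $\F_p(\MM)$—lets us assume $\MM=B_\MM$, whence part \ref{it:ball} gives $\F_p(\MM)\simeq\ell_p(\F_p(\MM))$.
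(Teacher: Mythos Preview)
Your proof is correct and follows essentially the same route as the paper: verify the bullet hypotheses of Theorem~\ref{thm:sumisomorphimGeneralized} (resp.\ Theorem~\ref{thm:sumisomorphimGeneralized:1}) using the doubling property via Theorem~\ref{thm:doubling}/Corollary~\ref{cor:doubling} for the amenability conditions and the iterate $f^{m-n}$ of the $R$-closed similarity for the uniform complementation of $\F_p(\MM_{A_n})$ in $\F_p(\MM_{A_m})$. Your emphasis on the uniformity of $K_1$---using a single similarity rather than iterating one-step complementations---is exactly the point, and the paper does the same (though more tersely).

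The only noteworthy divergence is in the final ``in particular'' clause for the uniformly separated case. The paper simply rescales the metric so that every nonzero point lies outside $B_\MM$, whence $\MM^*_{(1,\infty)}=\MM$ on the nose and part~\ref{it:notBall} applies directly. You instead argue that $B_\MM$ is finite (doubling plus uniform separation) and then peel off its points via Lemma~\ref{lem:separatedpoint}\ref{it:omitPointIso}. Both work; the paper's rescaling is shorter and avoids invoking the extra lemma, while your argument has the minor virtue of not needing to re-check that rescaling preserves $R$-closedness. For the bounded $R<1$ case you and the paper agree on rescaling.
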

\begin{proof}
Let us prove \ref{it:notBall}. Pick an arbitrary $\mu>0$. For $n\in\Int$, set $\MM_n:=\MM_{(R^{n}, c R^{n}]}$ and $\NN_n:=\MM_{(\mu^{-1}R^{n}, \mu cR^{n}]}$. Let $f\colon\MM\to \MM$ be a map such that $d(f(x),f(y)) = R d(x,y)$ for all $x$, $y\in\MM$ with $f(0)=0$. Note that $f^k$ is a $1$-Lipschitz isomorphism onto its image for all $k\in\Nat$. Moreover, $f^k(\MM_n )\subseteq\MM_{n+k}$. By Theorem~\ref{thm:doubling}, there is a constant $K_1$ such that $\F_p(\MM_n)$ is $K_1$-complemented in $\F_p(\MM_m)$ for every $n\le m$. Theorem~\ref{thm:doubling} also yields a constant $K_2$ such that $\MM_n$ is complementably $p$-amenable in $\NN_n$ with constant $K_2$ for every $n\in \Int$. Hence, the result follows from Theorem~\ref{thm:sumisomorphimGeneralized}.

The proof of \ref{it:ball} is analogous and so we omit it. If $\MM$ is uniformly separated (resp. bounded) we have $\MM=\MM\setminus B_\MM$ (resp. $\MM=B_\MM$) under a suitable rescaling of the metric. Since rescaling of the metric space gives isometric Lipschitz free spaces, we are done
\end{proof}

\begin{Remark}A theorem analogous to Theorem~\ref{thm:doublingAndClosed} holds for open, closed, left-closed, and right-open intervals.
\end{Remark}

Recall that a \emph{net} in a metric space $\MM$ is an $a$-separated and $b$-dense subset in $\MM$ for some positive numbers $a, b$. A typical example of a net in $\Rea^d$ is the set $\Int^d$. The following corollary extends and improves the result \cite{HN17}*{Theorem 7} of P. H\'ajek and M. Novotn\'y on $\ell_{1}$-sums of Lipschitz free spaces over nets.

\begin{Corollary}\label{cor:netInCarnotGroup}
Let $\MM$ be a pointed doubling self-similar metric space and let $\NN\subset \MM$ be a net in $\MM$. Then, for every $p\in(0,1]$, we have
\[
\F_p(\NN)\simeq \ell_p(\F_p(\NN)).
\]
\end{Corollary}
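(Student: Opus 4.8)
The plan is to reduce the statement to Theorem~\ref{thm:doublingAndClosed}, which already handles $R$-closed doubling metric spaces. The obstacle is that a net $\NN$ in a self-similar space $\MM$ need not itself be self-similar, so I cannot apply Theorem~\ref{thm:sumsIsoSelfSimilar} directly to $\NN$. Instead I would work with the ambient space $\MM$ and use that $\NN$ is a net to transfer the decomposition from $\MM$ to $\NN$ via complementability.

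First I would fix the self-similarity bijection $f\colon\MM\to\MM$ with constant $R>1$ and base point $0=f(0)$. The key observation is that a net behaves well under rescaling by $R$: if $\NN$ is $a$-separated and $b$-dense, then applying the dilation $f$ multiplies both the separation and density constants by $R$. This suggests decomposing $\NN$ into annular pieces $\NN_n=\NN\cap\MM_{(c_1R^n,c_2R^n]}$ and observing that $f^{k}$ maps $\NN_n$ into (a net of) $\NN_{n+k}$ up to controlled distortion. Since $\MM$ is self-similar and doubling, so is every subspace, and in particular $\NN$ is a doubling metric space with doubling constant controlled by that of $\MM$ (as noted after Corollary~\ref{cor:doubling}).

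The central step is then to verify the hypotheses of Theorem~\ref{thm:sumisomorphimGeneralized} (or Theorem~\ref{thm:sumisomorphimGeneralized:2}) for the pointed metric space $\NN$ with the annuli $A_n=(c_1R^n,c_2R^n]$. The complementability hypothesis $\F_p(\NN_{A_n})$ being $K_1$-complemented in $\F_p(\NN_{A_m})$ for $n\le m$ follows from the self-similar embedding $f^{m-n}$ together with Theorem~\ref{thm:doubling}: the image $f^{m-n}(\NN_{A_n})$ is a doubling subset of $\NN_{A_m}$ hence complementably $p$-amenable there with uniform constant. The complementable $p$-amenability of $\NN_{A_n}$ in $\NN_{B_n}$ (with $B_n=(c_1\mu^{-1}R^n,c_2\mu R^n]$) is exactly Theorem~\ref{thm:doubling} applied to the doubling pair $\NN_{A_n}\subset\NN_{B_n}$, again with a constant independent of $n$ by self-similarity. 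The remaining structural hypotheses ($\F_p(\NN_{(c_1,c_3]})$ complemented in $\F_p(\NN_{A_0})$, etc.) are handled the same way, since every annulus of $\NN$ is a doubling subset of a slightly larger annulus.

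Having checked these, Theorem~\ref{thm:sumisomorphimGeneralized} yields directly that $\F_p(\NN)\simeq_C \ell_p(\F_p(\NN))$, which is the desired conclusion. The part I expect to require the most care is ensuring all the complementability constants are \emph{uniform in $n$}: this is where self-similarity is essential, as it lets me transport the single estimate from Theorem~\ref{thm:doubling} on one annulus to all annuli via the isometric (after rescaling) maps $f^k$, rather than invoking the doubling property with possibly $n$-dependent constants. Once uniformity is secured, the machinery of Section~\ref{sect:generalEllPSum} and Pe\l czy\'nski's decomposition method (Lemma~\ref{lem:InfiniteSums}) closes the argument.
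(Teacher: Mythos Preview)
There is a genuine gap in your verification of the $K_1$-complementation hypothesis of Theorem~\ref{thm:sumisomorphimGeneralized}. You assert that ``the image $f^{m-n}(\NN_{A_n})$ is a doubling subset of $\NN_{A_m}$'', but this is exactly where the argument breaks: the self-similarity map $f$ acts on $\MM$, not on $\NN$, and since $\NN$ is not assumed to be $R$-closed you only get $f^{m-n}(\NN_{A_n})\subset\MM_{A_m}$, not $\subset\NN_{A_m}$. For a concrete instance, take $\MM=\Rea$ with $f(x)=2x$ and the net $\NN=\Int+\tfrac{1}{3}$; then $f(\NN)\cap\NN=\emptyset$. Theorem~\ref{thm:doubling} therefore only yields $\F_p(\NN_{A_n})\unlhd\F_p(\MM_{A_m})$ with uniform constant, which is not what you need, and there is no general way to descend from $\F_p(\MM_{A_m})$ to $\F_p(\NN_{A_m})$: in the example above the latter is finite-dimensional while the former is not. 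Your remark at the end about ``uniformity in $n$'' is not the real difficulty; the issue is that the inclusion you use simply fails.

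The paper's proof follows precisely your opening sentence rather than the route you subsequently take. It first constructs a special net $\NN_0\subset\MM$ that \emph{is} $R$-closed (this is possible because $\MM$ is self-similar; one builds a maximal separated set compatibly with the dilation, see the proof of \cite{CDL19}*{Corollary~1.18}). For $\NN_0$ the map $f$ genuinely sends annuli of $\NN_0$ into annuli of $\NN_0$, so Theorem~\ref{thm:doublingAndClosed} applies directly and gives $\F_p(\NN_0)\simeq\ell_p(\F_p(\NN_0))$. The passage to an arbitrary net $\NN$ is then a separate step: since $\MM$ is unbounded and separable, all nets in $\MM$ are countably infinite, and \cite{AACD2019}*{Theorem~3.5} yields $\F_p(\NN)\simeq\F_p(\NN_0)$. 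The two ingredients you are missing are the construction of an $R$-closed net and the invocation of the ``all nets give isomorphic free $p$-spaces'' result.
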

\begin{proof}
It is not difficult to construct a net $\NN_0\subset \MM$ in $\MM$ which is $R$-closed for some $R>1$ (see the proof of \cite{CDL19}*{Corollary 1.18}). Thus, by Theorem~\ref{thm:doublingAndClosed}, $\F_p(\NN_0)\simeq \ell_p(\F_p(\NN_0))$. If $\NN\subset\MM$ is an arbitrary net in $\MM$, using that $\MM$ is unbounded and separable, hence all the nets are infinite and countable, by \cite{AACD2019}*{Theorem 3.5} we get that $\F_p(\NN)\simeq \F_p(\NN_0)$, and we are done.
\end{proof}

The following is an improvement of \cite{CDL19}*{Corollary 1.18}.

\begin{Corollary}\label{cor:isoLipSpaces}
Let $(\MM,d,0)$ be a pointed doubling self-similar metric space and let $\NN\subset \MM$ be a net in $\MM$. Then $\Lip_0(\NN)\simeq \Lip_0(\MM)$.
\end{Corollary}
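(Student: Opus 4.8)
The plan is to pass to duals and conclude by Pe\l czy\'nski's decomposition method, using that $\Lip_0(\MM)=\F(\MM)^*$ and $\Lip_0(\NN)=\F(\NN)^*$ isometrically. First I would record the self-absorbing structure. By Corollary~\ref{cor:carnotGroup}, $\F(\MM)\simeq\ell_1(\F(\MM_{(1,R]}))$, and iterating the inner $\ell_1$-sum (or invoking Lemma~\ref{lem:InfiniteSums}) this upgrades to $\F(\MM)\simeq\ell_1(\F(\MM))$; likewise $\F(\NN)\simeq\ell_1(\F(\NN))$ by Corollary~\ref{cor:netInCarnotGroup}. Taking adjoints and using $(\ell_1(X))^*\simeq(\bigoplus_n X^*)_\infty$ isometrically gives
\[
\Lip_0(\MM)\simeq\Big(\bigoplus_{n}\Lip_0(\MM)\Big)_\infty,\qquad \Lip_0(\NN)\simeq\Big(\bigoplus_{n}\Lip_0(\NN)\Big)_\infty.
\]
Next, since $\NN$ is doubling (being a subset of the doubling space $\MM$), Theorem~\ref{thm:doubling} provides the canonical embedding $L_\jmath\colon\F(\NN)\to\F(\MM)$ and a projection $P$ with $P\circ L_\jmath=\Id_{\F(\NN)}$; dualizing, $P^*$ embeds $\Lip_0(\NN)$ complementably into $\Lip_0(\MM)$, with left inverse the restriction $L_\jmath^*$. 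Hence $\Lip_0(\NN)\unlhd\Lip_0(\MM)$.

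The crux is the reverse relation $\Lip_0(\MM)\unlhd\Lip_0(\NN)$, and here the argument must leave the predual: one cannot hope for $\F(\MM)\unlhd\F(\NN)$ (already for $\MM=\Rea$, $\NN=\Int$ one has $\F(\MM)\simeq L_1\not\unlhd\ell_1\simeq\F(\NN)$), so the embedding I want is genuinely a dual-space operator, not an adjoint. To build it I would sample at all scales. Writing $f\colon\MM\to\MM$ for the dilation with $d(f x,f y)=R\,d(x,y)$ and $f(0)=0$, each $\NN_k:=f^{-k}(\NN)$ is a net that is $bR^{-k}$-dense, has the same doubling constant as $\NN$, and is a dilate of $\NN$; in particular $\Lip_0(\NN_k)\simeq\Lip_0(\NN)$ isometrically and $\bigcup_k\NN_k$ is dense in $\MM$. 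By Theorem~\ref{thm:doubling}, each $\NN_k$ is complementably $1$-amenable in $\MM$ with a constant depending only on the doubling constant, hence uniformly in $k$; dualizing the corresponding projections yields linear extension operators $E_k\colon\Lip_0(\NN_k)\to\Lip_0(\MM)$ with $\sup_k\|E_k\|<\infty$ and $E_k(g)|_{\NN_k}=g$.

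I would then define the sampling operator $U\colon\Lip_0(\MM)\to\big(\bigoplus_k\Lip_0(\NN_k)\big)_\infty$ by $U(h)=(h|_{\NN_k})_k$ (of norm $\le 1$), and, after fixing a free ultrafilter $\UU$ on $\Nat$, a reconstruction operator $V$ by $V((g_k)_k)=w^*\text{-}\lim_{\UU}E_k(g_k)$. This $V$ is linear and bounded because the $E_k(g_k)$ lie in a fixed multiple of the $w^*$-compact ball $B_{\Lip_0(\MM)}$. Since $E_k(h|_{\NN_k})$ agrees with $h$ on the $bR^{-k}$-dense set $\NN_k$ and is uniformly Lipschitz, for each $x$ one has $|E_k(h|_{\NN_k})(x)-h(x)|\le(\sup_k\|E_k\|+1)\|h\|_{\Lip}\,bR^{-k}\to 0$, so $E_k(h|_{\NN_k})\to h$ pointwise, hence $w^*$ in $\F(\MM)^*$; therefore $V\circ U=\Id_{\Lip_0(\MM)}$. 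As $\big(\bigoplus_k\Lip_0(\NN_k)\big)_\infty\simeq\big(\bigoplus_k\Lip_0(\NN)\big)_\infty\simeq\Lip_0(\NN)$ by the displayed self-absorption, this gives $\Lip_0(\MM)\unlhd\Lip_0(\NN)$.

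Finally, with $\Lip_0(\NN)\unlhd\Lip_0(\MM)$, $\Lip_0(\MM)\unlhd\Lip_0(\NN)$, and $\Lip_0(\MM)\simeq\big(\bigoplus_n\Lip_0(\MM)\big)_\infty$, Pe\l czy\'nski's decomposition method (\cite{AlbiacKalton2016}*{Theorem 2.2.3}) yields $\Lip_0(\MM)\simeq\Lip_0(\NN)$. I expect the only delicate point to be this reverse complementation: one must check that the multiscale extensions are uniformly bounded (which is exactly the scale-invariance of the constant in Theorem~\ref{thm:doubling} combined with self-similarity) and that the $w^*$-limit reconstruction is linear, bounded, and inverts the sampling operator on the diagonal.
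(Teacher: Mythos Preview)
Your argument is correct. The paper's own proof is a single sentence invoking \cite{CDL19}*{Proposition 1.9} together with Corollaries~\ref{cor:carnotGroup} and~\ref{cor:netInCarnotGroup}, so the overall skeleton is the same as yours: establish $\ell_1$-self-absorption of $\F(\MM)$ and $\F(\NN)$, dualize to $\ell_\infty$-self-absorption of the $\Lip_0$ spaces, and finish with Pe\l czy\'nski's decomposition method. The difference is that the paper outsources the one nontrivial step not already proved here---the reverse complementation $\Lip_0(\MM)\unlhd\Lip_0(\NN)$---to \cite{CDL19}, whereas you supply it explicitly via the multiscale sampling $h\mapsto(h|_{\NN_k})_k$ along the dilated nets $\NN_k=f^{-k}(\NN)$ and the ultrafilter $w^*$-reconstruction $V$. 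Your construction makes transparent exactly where self-similarity enters (it transports $\NN$ to nets of all scales while keeping the doubling constant, hence the extension bounds from Theorem~\ref{thm:doubling}, uniform), and why the predual route $\F(\MM)\unlhd\F(\NN)$ is hopeless. Two minor remarks: to speak of $\Lip_0(\NN_k)$ you implicitly need $0\in\NN$, which is harmless after Lemma~\ref{lem:separatedpoint}; and the passage ``pointwise, hence $w^*$'' is valid precisely because you also have the uniform Lipschitz bound, so that convergence on the spanning set $\{\delta(x)\}$ upgrades to $w^*$-convergence on all of $\F(\MM)$.
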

\begin{proof}
Just combine \cite{CDL19}*{Proposition 1.9} with Corollaries~\ref{cor:carnotGroup} and \ref{cor:netInCarnotGroup}.
\end{proof}

Corollary~\ref{cor:netInCarnotGroup} gives, in particular, $\F_p(\Int^d)\simeq \ell_p(\F_p(\Int^d))$ for every $d\in\Nat$. We plan on studying in depth the structure of the Lipschitz free $p$-spaces $\F_p(\Int^d)$ in a further publication. For the time being we can state the following.
\begin{Theorem}\label{thm:natAndInt}
For every $p\in(0,1]$ and $d\in\Nat$ we have $\F_p(\Nat^d)\simeq\F_p(\Int^d)$.
\end{Theorem}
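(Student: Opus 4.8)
The plan is to deduce the isomorphism from two complemented embeddings together with Pe\l czy\'nski's decomposition method, exactly as in the proof of Theorem~\ref{thm:sphere}. The two structural facts that make the decomposition method applicable are already at our disposal: since both $\Nat^d$ and $\Int^d$ are $2$-closed, doubling, and uniformly separated, the final (``in particular'') clause of Theorem~\ref{thm:doublingAndClosed} yields
\[
\F_p(\Int^d)\simeq\ell_p(\F_p(\Int^d)),\qquad \F_p(\Nat^d)\simeq\ell_p(\F_p(\Nat^d)).
\]
Hence it suffices to prove that $\F_p(\Nat^d)\unlhd\F_p(\Int^d)$ and $\F_p(\Int^d)\unlhd\F_p(\Nat^d)$.

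The first embedding is immediate: $\Nat^d$ is a closed subset of the doubling metric space $\Int^d$, so by Corollary~\ref{cor:doubling} it is complementably $p$-amenable in $\Int^d$, which is exactly the statement $\F_p(\Nat^d)\unlhd\F_p(\Int^d)$.

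The second embedding is the main point, and I expect it to be the only genuine obstacle. First I would decompose $\F_p(\Int^d)$ into free spaces over annuli. After a suitable rescaling of the metric (which does not change the free space up to isometry) we may assume $\Int^d=\Int^d\setminus B_{\Int^d}$, so Theorem~\ref{thm:doublingAndClosed}\ref{it:notBall}, applied with $R=2$ and any $c>2$, gives
\[
\F_p(\Int^d)\simeq\left(\bigoplus_{n\in\Nat_*}\F_p\big(A_n\big)\right)_p,\qquad A_n:=(\Int^d)_{(2^n,\,c\,2^n]}.
\]
The key observation is that each annulus $A_n$ is a \emph{bounded} lattice subset of $\Int^d$, of diameter at most $2c\,2^n$; translating it by a large enough integer vector $v_n$ (for instance $v_n=(c\,2^n+1)(1,\dots,1)$) carries it isometrically onto a finite subset $A_n+v_n\subset\Nat^d$. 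Since translations are isometries and the free space over a metric space does not depend on the choice of base point, $\F_p(A_n)$ is isometric to $\F_p(A_n+v_n)$. As $A_n+v_n$ is a subset of the doubling space $\Nat^d$, Corollary~\ref{cor:doubling} provides a complemented embedding $\F_p(A_n)\unlhd_K\F_p(\Nat^d)$ whose constant $K$ depends only on $p$ and $\Nat^d$, and in particular is uniform in $n$.

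Finally, normalizing each of these embeddings and letting the resulting operators act coordinatewise assembles the uniform family into a single complemented embedding of the $\ell_p$-sums, so that, using the isomorphism recorded in the first paragraph,
\[
\F_p(\Int^d)\simeq\left(\bigoplus_{n\in\Nat_*}\F_p(A_n)\right)_p\unlhd\ell_p(\F_p(\Nat^d))\simeq\F_p(\Nat^d).
\]
With both complemented embeddings in hand and both spaces isomorphic to their own $\ell_p$-sums, Pe\l czy\'nski's decomposition method (see \cite{AlbiacKalton2016}*{Theorem 2.2.3}) gives $\F_p(\Int^d)\simeq\F_p(\Nat^d)$. The conceptual difficulty lies entirely in the second embedding: a direct bi-Lipschitz copy of $\Int^d$ inside $\Nat^d$ cannot exist, since a two-sided discrete line does not embed into a one-sided one; passing to bounded annuli is precisely what reduces the placement problem to a harmless integer translation, while the uniform control of the doubling constant across annuli is what permits gluing the individual complementations into one on the $\ell_p$-sum.
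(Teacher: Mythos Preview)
Your proof is correct and follows essentially the same route as the paper's: complement $\F_p(\Nat^d)$ in $\F_p(\Int^d)$ by doubling, decompose $\F_p(\Int^d)$ into an $\ell_p$-sum of free spaces over bounded annuli via Theorem~\ref{thm:doublingAndClosed}, move each annulus isometrically into $\Nat^d$, complement uniformly there, and conclude by Pe\l czy\'nski. The paper uses the reflection--translation $x\mapsto (2^{n+2},\dots,2^{n+2})-x$ to place the annulus $\{x\in\Int^d:2^n\le|x|_\infty\le 2^{n+2}\}$ inside $\Nat^d$, whereas you use a pure translation; both work. One small slip: your shift $v_n=(c\,2^n+1)(1,\dots,1)$ is not an integer vector unless $c\in\Nat$, so either fix $c$ to be an integer (e.g.\ $c=4$, matching the paper) or replace $c\,2^n$ by $\lceil c\,2^n\rceil$.
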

\begin{proof}
By Theorem~\ref{thm:doubling}, $\F_p(\Nat^d)\unlhd\F_p(\Int^d)$, and by Theorem~\ref{thm:doublingAndClosed},
\[
\F_p(\Int^d)\simeq \left(\bigoplus_{n\in\Nat} \F_p(\MM_n)\right)_p,
\]
where $\MM_n=\{x\in \Int^d \colon 2^n \le \vert x\vert_\infty\le 2^{n+2}\}$. Moreover, for every $n\in\Nat$ the set $\MM_n$ is isometric to a subset of $\Nat^d$ (use the map $x\mapsto \{2^{n+2}\}^d - x$), so by Theorem~\ref{thm:doubling} the spaces $\F_p(\MM_n)$ are uniformly complemented in $\F_p(\Nat^d)$. Using Corollary~\ref{cor:netInCarnotGroup} we obtain
\[
\F_p(\Int^d)\unlhd\ell_p(\F_p(\Nat^d))\simeq \F_p(\Nat^d).
\]
An application of Pe{\l}czy\'nski's decomposition method completes the proof.
\end{proof}

To close this section let us relate the finite-dimensional structure of the Lipschitz-free space over a self-similar doubling metric space with the Lipschitz-free spaces over its nets. The following two results apply for instance to $\MM=\Rea^d$ and $\NN=\Int^d$.
\begin{Proposition}\label{prop:netAndDoublingSpace}
Let $\MM$ be a doubling self-similar metric space and let $\NN\subset \MM$ be a net in $\MM$. For every $0<p\le 1$ there is a constant $C>0$ and an increasing sequence $(X_n)_{n\in\Nat}$ of subspaces of $\F_p(\MM)$ such that $\overline{\bigcup_{n\in\Nat} X_n} = \F_p(\MM)$ and for all $n\in\Nat$, $X_n$ is $C$-complemented in $\F_p(\MM)$ and $C$-isomorphic to $\F_p(\NN)$.
\end{Proposition}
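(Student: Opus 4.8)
The plan is to realize $\F_p(\MM)$ as the closure of an increasing chain of canonical copies of free spaces over finer and finer nets, each sitting inside $\F_p(\MM)$ as a uniformly complemented subspace. Let $f\colon\MM\to\MM$ be the self-similarity bijection with $d(f(x),f(y))=Rd(x,y)$ and $f(0)=0$, where $R>1$. The crux is to start from a net that is \emph{forward invariant} under $f$: following the construction in the proof of \cite{CDL19}*{Corollary 1.18} already invoked in Corollary~\ref{cor:netInCarnotGroup}, I would fix a net $\NN_0\subset\MM$ with $0\in\NN_0$ and $f(\NN_0)\subseteq\NN_0$. Setting $\NN_n:=f^{-n}(\NN_0)$, the invariance $f(\NN_0)\subseteq\NN_0$ translates into $\NN_0\subseteq f^{-1}(\NN_0)=\NN_1$, and applying the bijection $f^{-n}$ yields the nesting $\NN_0\subseteq\NN_1\subseteq\NN_2\subseteq\cdots$. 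Since $f^{-1}$ scales distances by $R^{-1}$, each $\NN_n$ is a net whose separation and density constants are those of $\NN_0$ multiplied by $R^{-n}$; in particular the density constants tend to $0$, so $\bigcup_n\NN_n$ is dense in $\MM$.

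Next I would define
\[
X_n:=[\{\delta_\MM(x)\colon x\in\NN_n\}]=L_{\jmath_n}(\F_p(\NN_n)),
\]
where $\jmath_n\colon\NN_n\hookrightarrow\MM$ and $L_{\jmath_n}$ is its canonical linearization. The nesting $\NN_n\subseteq\NN_{n+1}$ immediately gives $X_n\subseteq X_{n+1}$, and the density of $\bigcup_n\NN_n$ in $\MM$ gives $\overline{\bigcup_n X_n}=[\{\delta_\MM(x)\colon x\in\bigcup_n\NN_n\}]=\F_p(\MM)$, because $\delta_\MM$ is an isometry and the $\delta$-images of a dense set are total in the free space. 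For uniform complementation I would use that every $\NN_n$ is a subset of the doubling space $\MM$, hence doubling with doubling constant at most $D(\MM)^2$; Theorem~\ref{thm:doubling} then shows each $\NN_n$ is complementably $p$-amenable in $\MM$ with a single constant $C_1=C_1(p,\MM)$. Consequently each $X_n$ is $C_1$-complemented in $\F_p(\MM)$ and $L_{\jmath_n}$ is a $C_1$-isomorphism of $\F_p(\NN_n)$ onto $X_n$.

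It remains to identify $\F_p(\NN_n)$ with $\F_p(\NN)$ uniformly in $n$. Since $f^n\colon\NN_n\to\NN_0$ scales the metric by $R^n$ and rescaling the metric yields isometric free spaces, we have $\F_p(\NN_n)\cong\F_p(\NN_0)$ with a constant independent of $n$. Finally, as in Corollary~\ref{cor:netInCarnotGroup}, $\MM$ is unbounded (being self-similar with $R>1$) and separable (being doubling), so both $\NN_0$ and the given net $\NN$ are infinite countable nets in $\MM$; by \cite{AACD2019}*{Theorem 3.5} we get $\F_p(\NN_0)\simeq_{C_2}\F_p(\NN)$. Composing these three steps shows each $X_n$ is $C$-isomorphic to $\F_p(\NN)$ with $C=C_1C_2$ independent of $n$, completing the argument.

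The main obstacle is the very first step: producing a single net that is simultaneously forward invariant under the self-similarity (so that the canonical copies genuinely nest) and whose rescalings exhaust $\MM$. Everything after that is bookkeeping, feeding the resulting chain into Theorem~\ref{thm:doubling} and the isomorphic invariance of free spaces over nets. I would therefore concentrate the care on the $f$-invariant net construction and merely cite the doubling complementation and net-isomorphism results for the rest.
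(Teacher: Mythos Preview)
Your proposal is correct and follows essentially the same route as the paper: construct an $f$-invariant net $\NN_0$ via \cite{CDL19}*{Corollary 1.18}, set $X_n$ to be the closed span of $\delta_\MM$ on $f^{-n}(\NN_0)$, invoke Theorem~\ref{thm:doubling} for uniform complementation and amenability, note that rescaling gives $\F_p(f^{-n}(\NN_0))$ isometric to $\F_p(\NN_0)$, and finish with \cite{AACD2019}*{Theorem 3.5} to pass to an arbitrary net $\NN$. Your write-up even spells out a couple of points (the nesting from $f(\NN_0)\subseteq\NN_0$, and why $\MM$ is unbounded and separable) that the paper leaves implicit.
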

\begin{proof}
Let $f\colon\MM\to\MM$ be the bijection from the definition of a self-similar space and let $0\in\MM$ be such that $f(0)=0$. It is not difficult to construct a $1$-separated and $1$-dense set $\NN_0\subset \MM$ for which $f(\NN_0)\subset\NN_0$ (see, e.g., the proof of \cite{CDL19}*{Corollary 1.18}). For $n\in\Nat$ put $X_n=[\delta_{\MM}(f^{-n}(x))\colon x\in\NN_0]$. Theorem~\ref{thm:doubling} gives that $X_n$ is uniformly complemented in $\F_p(\MM)$ and uniformly isomorphic to $\F_p(f^{-n}(\NN_0))$, which in turn is isometric to $\F_p(\NN_0)$. Thus, it remains to prove that $\bigcup_{n\in\Nat} X_n$ is dense in $\F_p(\MM)$. For that, it is sufficient to observe that since $\NN_0$ is $1$-dense, $\{f^{-n}(x)\colon n\in\Nat,x\in\NN_0\}$ is dense in $\MM$.
Finally, if $\NN\subset\MM$ is an arbitrary net, by \cite{AACD2019}*{Proposition 5} we get that $\F_p(\NN)\simeq\F_p(\NN_0)$.
\end{proof}

\begin{Corollary}
Let $\MM$ be a doubling self-similar metric space and let $\NN\subset \MM$ be a net in $\MM$. For every $p\in(0,1]$, $\F_p(\MM)$ is crudely finitely representable in $\F_p(\NN)$ and $\F_p(\NN)$ is crudely finitely representable in $\F_p(\MM)$. Moreover, the finite-dimensional complemented subspace structures of $\F_p(\NN)$ and $\F_p(\MM)$ coincide; that is, there is a constant $C>1$ such that if $X$ is a finite-dimensional and $K$-complemented subspace in $\F_p(\MM)$ then there is a $(CK)$-complemented subspace $Y$ in $\F_p(\NN)$ whose Banach-Mazur distance to $X$ is at most $C$, and the other way around.
\end{Corollary}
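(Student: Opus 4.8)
The plan is to derive everything from Proposition~\ref{prop:netAndDoublingSpace}, which furnishes a constant $C>0$ and an increasing sequence $(X_n)_{n\in\Nat}$ of subspaces of $\F_p(\MM)$ with $\overline{\bigcup_n X_n}=\F_p(\MM)$, each $X_n$ being simultaneously $C$-complemented in $\F_p(\MM)$ and $C$-isomorphic to $\F_p(\NN)$; I fix isomorphisms $\Psi_n\colon X_n\to\F_p(\NN)$ with $\|\Psi_n\|\,\|\Psi_n^{-1}\|\le C$ and projections $\Pi_n\colon\F_p(\MM)\to X_n$ with $\|\Pi_n\|\le C$. The two crude-finite-representability assertions then come almost for free. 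Since $\F_p(\NN)\simeq_C X_1\subseteq\F_p(\MM)$, every finite-dimensional $E\subseteq\F_p(\NN)$ sits, via $\Psi_1^{-1}$, inside $\F_p(\MM)$ at Banach--Mazur distance at most $C$, so $\F_p(\NN)$ is crudely finitely representable in $\F_p(\MM)$. Conversely, given a finite-dimensional $E\subseteq\F_p(\MM)$, the density of $\bigcup_n X_n$ together with the principle of small perturbations (used exactly as in the proof of Lemma~\ref{lem:sy}, cf.\ \cite{AlbiacKalton2016}*{Theorem 1.3.10}, valid verbatim for $p$-Banach spaces) produces, for each $\varepsilon>0$, an index $n$ and a subspace $E'\subseteq X_n$ with $d(E,E')\le 1+\varepsilon$; applying $\Psi_n$ transports $E'$ into $\F_p(\NN)$ at distance at most $C$, whence $d\big(E,\Psi_n(E')\big)\le (1+\varepsilon)C$. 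Thus $\F_p(\MM)$ is crudely finitely representable in $\F_p(\NN)$.

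For the statement on complemented subspaces I would upgrade the perturbation step so that it also preserves complementation. The key auxiliary fact is the standard perturbation lemma for complemented finite-dimensional subspaces, adapted to the quasi-normed setting: if $X\subseteq\F_p(\MM)$ is finite-dimensional and $K$-complemented via a projection $P$ with $\|P\|\le K$, then for every $\varepsilon>0$ there is $\delta>0$ so that any isomorphism $V\colon X\to X'$ onto a subspace $X'$ with $\|Vx-x\|\le\delta\|x\|$ for all $x\in X$ yields both $d(X,X')\le 1+\varepsilon$ and a projection $\widetilde Q\colon\F_p(\MM)\to X'$ of norm at most $(1+\varepsilon)K$. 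One builds $\widetilde Q=W^{-1}\circ(V\circ P)$, where $W=(V\circ P)|_{X'}$ is invertible with $\|W^{-1}\|$ close to $1$ once $\delta$ is small, since $Wx'-x'=V P(Vx-x)$ is uniformly small on the unit sphere of $X'$. Now, starting from a finite-dimensional $K$-complemented $X\subseteq\F_p(\MM)$, I approximate a basis of $X$ inside some $X_n$ (possible by density) closely enough to obtain such a $V\colon X\to X'\subseteq X_n$; this makes $X'$ a subspace of $X_n$ that is $(1+\varepsilon)K$-complemented in $\F_p(\MM)$ with $d(X,X')\le 1+\varepsilon$. Setting $Y=\Psi_n(X')\subseteq\F_p(\NN)$, the composition
\[
\F_p(\NN)\xrightarrow{\Psi_n^{-1}}X_n\hookrightarrow\F_p(\MM)\xrightarrow{\widetilde Q}X'\xrightarrow{\Psi_n}Y
\]
restricts to the identity on $Y$ and hence is a projection onto $Y$ of norm at most $C(1+\varepsilon)K$, while $d(X,Y)\le d(X,X')\,d(X',Y)\le(1+\varepsilon)C$ by submultiplicativity of the Banach--Mazur distance. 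Fixing $\varepsilon=1$ and enlarging $C$ gives the required $(CK)$-complemented $Y$ with $d(X,Y)\le C$.

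The reverse direction needs no perturbation. Given a finite-dimensional $Y\subseteq\F_p(\NN)$ that is $K$-complemented via $P_Y$ with $\|P_Y\|\le K$, I put $X=\Psi_1^{-1}(Y)\subseteq X_1\subseteq\F_p(\MM)$, so that $d(X,Y)\le C$ through $\Psi_1|_X$. Then
\[
\F_p(\MM)\xrightarrow{\Pi_1}X_1\xrightarrow{\Psi_1}\F_p(\NN)\xrightarrow{P_Y}Y\xrightarrow{\Psi_1^{-1}}X
\]
is the identity on $X$ (because $\Pi_1$ fixes $X\subseteq X_1$ and $P_Y$ fixes $Y$), hence a projection onto $X$ of norm at most $C\cdot K\cdot C=C^2K$; after replacing $C$ by $C^2$ this is the desired $(CK)$-complemented copy of $Y$ inside $\F_p(\MM)$. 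I expect the only genuinely delicate point to be the perturbation lemma preserving complementation: one must check that the Neumann-series inversion of $W$ and the attendant norm bounds survive the failure of the triangle inequality for $p<1$. This is ultimately harmless because $X$ and $X'$ are finite-dimensional, so all quasi-norms on them are equivalent and invertibility is an open condition; nevertheless the bookkeeping of the constants should be carried out keeping the $p$-subadditivity of the quasi-norm in mind.
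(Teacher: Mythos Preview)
Your proof is correct and follows essentially the same approach as the paper. The paper's own proof is a two-liner: it invokes Theorem~\ref{thm:doubling} to get $\F_p(\NN)\unlhd\F_p(\MM)$ directly (rather than going through the $C$-complementation of $X_1$ as you do), and then says ``the rest follows from Proposition~\ref{prop:netAndDoublingSpace}''---so your argument is precisely the fleshing-out of that reference, including the small-perturbation step and the bookkeeping for $p<1$ that the paper leaves implicit.
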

\begin{proof}
By Theorem~\ref{thm:doubling}, we have $\F_p(\NN)\unlhd\F_p(\MM)$. The rest follows from Proposition~\ref{prop:netAndDoublingSpace}.
\end{proof}

\section{Open problems}\noindent
If $\MM$ is a compact metric space with only one accumulation point, $\F_p(\MM)$ has the commuting $C$-BAP for every $C>4^{1/p}$ (see Proposition~\ref{prop:commutingBAP}). However, by \cite{D15} and \cite{CaKa90}*{Theorem 2.4}, more can be said and $\F_1(K)$ has even the commuting MAP for every countable metric compact space $K$ (in fact, it is enough to suppose that $K$ is a countable proper metric space, see \cite{Da15}). We do not know whether a similar statement holds for $p<1$. Note that the proof for $p=1$ from \cite{D15} is based on duality techniques and so the proof for $p<1$ would be most probably interesting even for the classical case of $p=1$ as it would have to rely on different arguments.
\begin{Question}
Let $K$ be a countable proper metric space and $p\in(0,1)$. Does $\F_p(K)$ have the metric approximation property?
\end{Question}

There are known examples of metric spaces $\MM$ such that $\F(\MM)$ does not have AP. However, all the examples we know use integration techniques to some extent. For instance, integration is crucially used in the proof by Godefroy and Kalton (see \cite{GodefroyKalton2003}*{Theorem 3.1}) of the fact that $X\unlhd \F(X)$ for every separable Banach space, also in the proof by Godefroy and Ozawa (see \cite{GO14}*{Corollary 5}) that there exists a compact metric space $K$ such that $\F(K)$ fails AP, or in the construction by H\'ajek et al.\ (see \cite{HLP16}*{Corollary 2.2}) of a compact metric space homeomorphic to the Cantor space whose Lispchitz-free space fails AP. It would be interesting to find examples based on certain combinatorial features of the underlying metric space $\MM$. Since integration is not available in $p$-Banach spaces with $p<1$ (see \cite{AlbiacAnsorena2013}), a natural question in this direction is the following.
\begin{Question}
Let $p\in(0,1)$. Does there exist a metric space $\MM$ such that $\F_p(\MM)$ does not have AP?
\end{Question}

Since for uniformly discrete metric spaces $\MM$ we know that $\F_p(\MM)$ has AP (see Corollary~\ref{cor:unifDiscr}), the following seems to be an interesting problem. Note that if the answer is positive for bounded discrete metric spaces then it is positive for unbounded metric spaces with finitely many accumulation points as well (see Proposition~\ref{prop:finiteAcPoints}).
\begin{Question}
Let $\MM$ be a discrete metric space. Does $\F(\MM)$ have AP? Or, more generally, does $\F_p(\MM)$ have AP for every $p\in (0,1]$?
\end{Question}

By Theorem~\ref{thm:Banach}, for every Banach space $X$ and every $p\in(0,1]$ we have $\F_p(X)\simeq \ell_p(\F_p(X))$. Our techniques work only for metric spaces, so the following might be an interesting problem.

\begin{Question}
Let $X$ be a $p$-Banach space. Is $\F_p(X)\simeq \ell_p(\F_p(X))$ for $p\in(0,1]$?
\end{Question}

Pick a separable Banach space $X$ and $\NN_X$ a net in $X$. By \cite{AACD2019}*{Theorem 3.5} and Corollary~\ref{cor:netInCarnotGroup}, if $X$ is finite-dimensional we have $\F_p(\NN_X)\simeq \ell_p(\F_p(\NN_X))$ for every $p\in (0,1]$. The same holds for some infinite-dimensional Banach spaces $X$ and $p=1$ (see \cite{HN17}*{Theorem 8}). These results motivate us to raise the next question. Note that a similar problem has been proposed for $p=1$ in \cite{CDL19}*{Question 4} and that a positive answer for some $p<1$ would imply a positive answer for each $q\in(p,1]$.

\begin{Question}
Let $X$ be a Banach space and $\NN_X$ be a net in $X$. Is $\F_p(\NN_X)\simeq \ell_p(\F_p(\NN_X))$ for some (any) $p\in (0,1]$?
\end{Question}

By \cite{GodefroyKalton2003}*{p. 139}, we have $\F(U)\simeq U$ for Pe\l czy\'nski's universal basis space $U$. We wonder if there are more examples. Recently, there has been constructed an analogue of this space $U$ for $p$-Banach spaces, see \cite{CCM19}. However, the proof that $\F(U)\simeq U$ seems to very much depend on techniques available for Banach spaces only. Therefore, we propose an interesting question which would hopefully also shed the light onto the case $p=1$.

\begin{Question}
 Does there exist for each $p\in(0,1]$ a $p$-Banach space $X$ with $\F_p(X)\simeq X$?
\end{Question}

The following question is motivated by Corollary~\ref{cor:fddDoubling}, \cite{K15}*{Corollary 3.5}, and \cite{HP14}*{Theorem 3.1} from where it follows that whenever $K\subset\Rea^d$ is a compact set which is either countable or has a non-empty interior, then $\F(K)$ has FDD. A related question is \cite{HP14}*{Problem 4.1}, where the authors ask whether $\F(\MM)$ has a Schauder basis whenever $\MM$ is a subset of an Euclidean space.
\begin{Question}\label{question:7}
Let $d\in\Nat$. If $K\subset \Rea^d$ is an uncountable compact set with empty interior, does $\F(K)$ have the commuting BAP?
\end{Question}
Note that a positive answer to Question~\ref{question:7} would imply that for every compact set $K$ in an Euclidean space the Banach space $\F(K)$ has the FDD (see the proof of Corollary~\ref{cor:fddDoubling}).

In relation to Corollary~\ref{cor:doublingSnowflake}, we wonder if it can be extended to $p<1$. To be precise, let $0<p,\alpha\le 1$, $d\in\Nat$, and let $\MM$ be an infinite subset of $\Rea^d$.
By \cite{AACD2019}*{Theorem 3.1} we know that $\ell_p$ is complemented in $\F_p(\MM,|\cdot|^\alpha)$. We also know that $\F_p(\MM,|\cdot|^\alpha)$ is complemented in $\F_p(\Rea^d,|\cdot|^\alpha)$ by Theorem~\ref{thm:doubling}, and that $\F_p(\Rea^d,|\cdot|^\alpha)\simeq \F_p([0,1]^d,|\cdot|^\alpha)$ by Theorem~\ref{thm:Banach}. Thus, to shed light onto this question it is crucial to understand the geometry of $\F_p([0,1]^d,|\cdot|^\alpha)$. We would like to point out that the techniques used by Weaver to prove \cite{WeaverBook2018}*{Theorem 8.43} give 
\[
\F_p([0,1],|\cdot|^\alpha) \simeq\ell_p, \quad 0<p\le 1, \, 0<\alpha<1.
\]
However, unfortunately the argument breaks down for higher dimensions  if $p<1$.

\begin{Question}
Let $p$, $\alpha\in(0,1)$ and $d\in\Nat$ with $d\ge 2$. Is $\F_p([0,1]^d,|\cdot|^\alpha)$ isomorphic to $\ell_p$?
\end{Question}

As proof of Corollary~\ref{cor:doublingSnowflake} is not constructive, the following problem arises.
\begin{Question}
Let $(\MM,d)$ be a doubling metric space and $\alpha\in (0,1)$. Find $(\mu_n)_{n=1}^\infty$ in $\spn\{\delta(x)\colon x\in\MM\}\subset \F(\MM,d^\alpha)$ which is equivalent to the unit vector basis of $\ell_1$. Can we construct  $(\mu_n)_{n=1}^\infty$ so that it is also a Schauder basis  for $\F(\MM,d)$?
\end{Question}


\begin{bibdiv}
\begin{biblist}

\bib{AlbiacAnsorena2013}{article}{
author={Albiac, F.},
author={Ansorena, J. L.},
title={Integration in quasi-Banach spaces and the fundamental theorem of
calculus},
journal={J. Funct. Anal.},
volume={264},
date={2013},
number={9},
pages={2059--2076},
}

\bib{AACD2019}{article}{
author={Albiac, Fernando},
author={Ansorena, Jos\'{e}~L.},
author={C\'{u}th, Marek},
author={Doucha, Michal},
title={Embeddability of {$\ell_p$} and bases in {L}ipschitz free
{$p$}-spaces for {$0<p\le1$}},
date={2020},
ISSN={0022-1236},
journal={J. Funct. Anal.},
volume={278},
number={4},
pages={108354, 33},
url={https://doi-org.ezproxy.is.cuni.cz/10.1016/j.jfa.2019.108354},
review={\MR{4044745}},
}

\bib{AACD2018}{article}{
author={{Albiac}, Fernando},
author={{Ansorena}, Jose~L.},
author={{Cuth}, Marek},
author={{Doucha}, Michal},
title={{Lipschitz free $p$-spaces for $0<p<1$}},
date={2018},
journal={arXiv e-prints},
eprint={1811.01265},
note={Accepted for publication in Isr. J. Math},
}

\bib{AlbiacKalton2009}{article}{
author={Albiac, F.},
author={Kalton, N.~J.},
title={Lipschitz structure of quasi-{B}anach spaces},
date={2009},
ISSN={0021-2172},
journal={Israel J. Math.},
volume={170},
pages={317\ndash 335},
url={https://doi.org/10.1007/s11856-009-0031-z},
review={\MR{2506329}},
}

\bib{AlbiacKalton2016}{book}{
author={Albiac, Fernando},
author={Kalton, Nigel~J.},
title={Topics in {B}anach space theory},
edition={Second},
series={Graduate Texts in Mathematics},
publisher={Springer, [Cham]},
date={2016},
volume={233},
ISBN={978-3-319-31555-3; 978-3-319-31557-7},
url={https://doi.org/10.1007/978-3-319-31557-7},
note={With a foreword by Gilles Godefroy},
review={\MR{3526021}},
}

\bib{AmbrosioPuglisi2016}{article}{
author={Ambrosio, Luigi},
author={Puglisi, Daniele},
title={Linear extension operators between spaces of Lipschitz maps and optimal transport},
journal={J. Reine Angew. Math.},
note={to appear},
url={https://www.degruyter.com/view/journals/crll/ahead-of-print/article-10.1515-crelle-2018-0037/article-10.1515-crelle-2018-0037.xml}
}

\bib{Ansorena2018}{article}{
author={Ansorena, J.~L.},
title={A note on subsymmetric renormings of {B}anach spaces},
date={2018},
ISSN={1607-3606},
journal={Quaest. Math.},
volume={41},
number={5},
pages={615\ndash 628},
url={https://doi-org.umbral.unirioja.es/10.2989/16073606.2017.1393704},
review={\MR{3836410}},
}

\bib{Assouad1983}{article}{
author={Assouad, Patrice},
title={Plongements lipschitziens dans {${\bf R}^{n}$}},
date={1983},
ISSN={0037-9484},
journal={Bull. Soc. Math. France},
volume={111},
number={4},
pages={429\ndash 448},
url={http://www.numdam.org/item?id=BSMF_1983__111__429_0},
review={\MR{763553}},
}

\bib{BrudnyiBrudnyi2007}{article}{
 author={Brudnyi, Alexander},
 author={Brudnyi, Yuri},
 title={Metric spaces with linear extensions preserving Lipschitz
 condition},
 journal={Amer. J. Math.},
 volume={129},
 date={2007},
 number={1},
 pages={217--314},
 issn={0002-9327},
 review={\MR{2288741}},
 doi={10.1353/ajm.2007.0000},
}

\bib{BMS18}{article}{
author={{Bru{\`e}}, Elia},
author={{Di Marino}, Simone},
author={{Stra}, Federico},
title={{Linear Lipschitz and $C^1$ extension operators through random
projection}},
date={2018},
journal={arXiv e-prints},
eprint={1801.07533},
}

\bib{CCM19}{article}{
author={Cabello~S\'{a}nchez, F\'{e}lix},
author={Castillo, Jes\'{u}s M.~F.},
author={Moreno, Yolanda},
title={On the bounded approximation property on subspaces of {$\ell_p$}
when {$0<p<1$} and related issues},
date={2019},
ISSN={0933-7741},
journal={Forum Math.},
volume={31},
number={6},
pages={1533\ndash 1556},
url={https://doi-org.ezproxy.is.cuni.cz/10.1515/forum-2018-0174},
review={\MR{4026468}},
}

\bib{CDL19}{article}{
author={Candido, Leandro},
author={C\'{u}th, Marek},
author={Doucha, Michal},
title={Isomorphisms between spaces of {L}ipschitz functions},
date={2019},
ISSN={0022-1236},
journal={J. Funct. Anal.},
volume={277},
number={8},
pages={2697\ndash 2727},
url={https://doi.org/10.1016/j.jfa.2019.02.003},
review={\MR{3990732}},
}

\bib{CaKa90}{incollection}{
author={Casazza, P.~G.},
author={Kalton, N.~J.},
title={Notes on approximation properties in separable {B}anach spaces},
date={1990},
booktitle={Geometry of {B}anach spaces ({S}trobl, 1989)},
series={London Math. Soc. Lecture Note Ser.},
volume={158},
publisher={Cambridge Univ. Press, Cambridge},
pages={49\ndash 63},
review={\MR{1110185}},
}

\bib{CasHandbook}{incollection}{
author={Casazza, Peter~G.},
title={Approximation properties},
date={2001},
booktitle={Handbook of the geometry of {B}anach spaces, {V}ol. {I}},
publisher={North-Holland, Amsterdam},
pages={271\ndash 316},
url={https://doi.org/10.1016/S1874-5849(01)80009-7},
review={\MR{1863695}},
}

\bib{D15}{article}{
author={Dalet, A.},
title={Free spaces over countable compact metric spaces},
date={2015},
ISSN={0002-9939},
journal={Proc. Amer. Math. Soc.},
volume={143},
number={8},
pages={3537\ndash 3546},
url={https://doi.org/10.1090/S0002-9939-2015-12518-X},
review={\MR{3348795}},
}

\bib{Da15}{article}{
author={Dalet, A.},
title={Free spaces over some proper metric spaces},
date={2015},
ISSN={1660-5446},
journal={Mediterr. J. Math.},
volume={12},
number={3},
pages={973\ndash 986},
url={https://doi.org/10.1007/s00009-014-0455-5},
review={\MR{3376824}},
}

\bib{DL15}{article}{
author={Descombes, Dominic},
author={Lang, Urs},
title={Convex geodesic bicombings and hyperbolicity},
date={2015},
ISSN={0046-5755},
journal={Geom. Dedicata},
volume={177},
pages={367\ndash 384},
url={https://doi.org/10.1007/s10711-014-9994-y},
review={\MR{3370039}},
}

\bib{Edwards2011}{article}{
 author={Edwards, D. A.},
 title={On the Kantorovich-Rubinstein theorem},
 journal={Expo. Math.},
 volume={29},
 date={2011},
 number={4},
 pages={387--398},
 issn={0723-0869},
 review={\MR{2861765}},
 doi={10.1016/j.exmath.2011.06.005},
}

\bib{GodefroyKalton2003}{article}{
author={Godefroy, G.},
author={Kalton, N.~J.},
title={Lipschitz-free {B}anach spaces},
date={2003},
ISSN={0039-3223},
journal={Studia Math.},
volume={159},
number={1},
pages={121\ndash 141},
url={https://doi.org/10.4064/sm159-1-6},
note={Dedicated to Professor Aleksander Pe{\l}czy\'{n}ski on the
occasion of his 70th birthday},
review={\MR{2030906}},
}

\bib{GLZ}{article}{
author={Godefroy, G.},
author={Lancien, G.},
author={Zizler, V.},
title={The non-linear geometry of {B}anach spaces after {N}igel
{K}alton},
date={2014},
ISSN={0035-7596},
journal={Rocky Mountain J. Math.},
volume={44},
number={5},
pages={1529\ndash 1583},
url={https://doi-org.ezproxy.is.cuni.cz/10.1216/RMJ-2014-44-5-1529},
review={\MR{3295641}},
}

\bib{G15}{article}{
author={Godefroy, Gilles},
title={A survey on {L}ipschitz-free {B}anach spaces},
date={2015},
ISSN={2080-1211},
journal={Comment. Math.},
volume={55},
number={2},
pages={89\ndash 118},
url={https://doi-org.ezproxy.is.cuni.cz/10.14708/cm.v55i2.1104},
review={\MR{3518958}},
}

\bib{GO14}{article}{
author={Godefroy, Gilles},
author={Ozawa, Narutaka},
title={Free {B}anach spaces and the approximation properties},
date={2014},
ISSN={0002-9939},
journal={Proc. Amer. Math. Soc.},
volume={142},
number={5},
pages={1681\ndash 1687},
url={https://doi.org/10.1090/S0002-9939-2014-11933-2},
review={\MR{3168474}},
}

\bib{HLP16}{article}{
author={H\'{a}jek, P.},
author={Lancien, G.},
author={Perneck\'{a}, E.},
title={Approximation and {S}chur properties for {L}ipschitz free spaces
over compact metric spaces},
date={2016},
ISSN={1370-1444},
journal={Bull. Belg. Math. Soc. Simon Stevin},
volume={23},
number={1},
pages={63\ndash 72},
url={http://projecteuclid.org/euclid.bbms/1457560854},
review={\MR{3471979}},
}

\bib{HN17}{article}{
author={H\'{a}jek, Petr},
author={Novotn\'{y}, Mat\v{e}j},
title={Some remarks on the structure of {L}ipschitz-free spaces},
date={2017},
ISSN={1370-1444},
journal={Bull. Belg. Math. Soc. Simon Stevin},
volume={24},
number={2},
pages={283\ndash 304},
url={https://projecteuclid.org/euclid.bbms/1503453711},
review={\MR{3694004}},
}

\bib{HP14}{article}{
author={H\'{a}jek, Petr},
author={Perneck\'{a}, Eva},
title={On {S}chauder bases in {L}ipschitz-free spaces},
date={2014},
ISSN={0022-247X},
journal={J. Math. Anal. Appl.},
volume={416},
number={2},
pages={629\ndash 646},
url={https://doi.org/10.1016/j.jmaa.2014.02.060},
review={\MR{3188728}},
}

\bib{Kalton1977c}{article}{
 author={Kalton, N.~J.},
 title={Universal spaces and universal bases in metric linear spaces},
 date={1977},
 ISSN={0039-3223},
 journal={Studia Math.},
 volume={61},
 number={2},
 pages={161\ndash 191},
 url={https://doi.org/10.4064/sm-61-2-161-191},
 review={\MR{458109}},
}

\bib{Kalton1984}{article}{
 author={Kalton, N.~J.},
 title={Locally complemented subspaces and $\mathcal{L}_{p}$-spaces for
 $0<p<1$},
 date={1984},
 ISSN={0025-584X},
 journal={Math. Nachr.},
 volume={115},
 pages={71\ndash 97},
 url={https://doi-org/10.1002/mana.19841150107},
 review={\MR{755269}},
}

\bib{Kalton2004}{article}{
author={Kalton, N.~J.},
title={Spaces of {L}ipschitz and {H}\"{o}lder functions and their
applications},
date={2004},
ISSN={0010-0757},
journal={Collect. Math.},
volume={55},
number={2},
pages={171\ndash 217},
review={\MR{2068975}},
}

\bib{K15}{article}{
author={Kaufmann, Pedro~Levit},
title={Products of {L}ipschitz-free spaces and applications},
date={2015},
ISSN={0039-3223},
journal={Studia Math.},
volume={226},
number={3},
pages={213\ndash 227},
url={https://doi.org/10.4064/sm226-3-2},
review={\MR{3356002}},
}

\bib{LP13}{article}{
author={Lancien, G.},
author={Perneck\'a, E.},
title={Approximation properties and {S}chauder decompositions in
{L}ipschitz-free spaces},
date={2013},
ISSN={0022-1236},
journal={J. Funct. Anal.},
volume={264},
number={10},
pages={2323\ndash 2334},
url={https://doi.org/10.1016/j.jfa.2013.02.012},
review={\MR{3035057}},
}

\bib{LeD}{article}{
author={Le~Donne, Enrico},
title={A primer on {C}arnot groups: homogenous groups,
{C}arnot-{C}arath\'{e}odory spaces, and regularity of their isometries},
date={2017},
journal={Anal. Geom. Metr. Spaces},
volume={5},
number={1},
pages={116\ndash 137},
url={https://doi.org/10.1515/agms-2017-0007},
review={\MR{3742567}},
}

\bib{DLM20}{article}{
author={{Le Donne}, Enrico},
author={{Li}, Sean},
author={{Moisala}, Terhi},
title={{G{\^a}teaux differentiability on infinite-dimensional Carnot
groups}},
date={2018},
journal={arXiv e-prints},
eprint={1812.07375},
}

\bib{LN05}{article}{
author={Lee, James~R.},
author={Naor, Assaf},
title={Extending {L}ipschitz functions via random metric partitions},
date={2005},
ISSN={0020-9910},
journal={Invent. Math.},
volume={160},
number={1},
pages={59\ndash 95},
url={https://doi.org/10.1007/s00222-004-0400-5},
review={\MR{2129708}},
}

\bib{OO19}{article}{
author={Ostrovska, Sofiya},
author={Ostrovskii, Mikhail~I.},
title={Generalized transportation cost spaces},
date={2019},
ISSN={1660-5446},
journal={Mediterr. J. Math.},
volume={16},
number={6},
pages={Paper No. 157, 26},
url={https://doi-org/10.1007/s00009-019-1433-8},
review={\MR{4029537}},
}

\bib{Pel1960}{article}{
author={Pe{\l}czy\'{n}ski, A.},
title={Projections in certain {B}anach spaces},
date={1960},
ISSN={0039-3223},
journal={Studia Math.},
volume={19},
pages={209\ndash 228},
url={https://doi-org.umbral.unirioja.es/10.4064/sm-19-2-209-228},
review={\MR{126145}},
}

\bib{Singer1970}{book}{
author={Singer, Ivan},
title={Bases in {B}anach spaces. {I}},
publisher={Springer-Verlag, New York-Berlin},
date={1970},
note={Die Grundlehren der mathematischen Wissenschaften, Band 154},
review={\MR{0298399}},
}

\bib{VillaniBook2}{book}{
author={Villani, C\'{e}dric},
title={Optimal transport},
series={Grundlehren der Mathematischen Wissenschaften [Fundamental
Principles of Mathematical Sciences]},
publisher={Springer-Verlag, Berlin},
date={2009},
volume={338},
ISBN={978-3-540-71049-3},
url={https://doi.org/10.1007/978-3-540-71050-9},
note={Old and new},
review={\MR{2459454}},
}

\bib{WeaverBook2018}{book}{
author={Weaver, Nik},
title={Lipschitz algebras},
publisher={World Scientific Publishing Co. Pte. Ltd., Hackensack, NJ},
date={2018},
ISBN={978-981-4740-63-0},
note={Second edition},
review={\MR{3792558}},
}

\end{biblist}
\end{bibdiv}

\end{document}